\documentclass[11pt,reqno]{amsart}
\overfullrule=1pt
\frenchspacing

\numberwithin{equation}{section}

\newtheorem{theorem}{Theorem}[section]
\newtheorem{proposition}[theorem]{Proposition}
\newtheorem{corollary}[theorem]{Corollary}
\newtheorem{lemma}[theorem]{Lemma}

\theoremstyle{definition}

\theoremstyle{definition} 
\newtheorem{remark}[theorem]{Remark}
\newtheorem{remarks}[theorem]{Remarks}





\newcommand{\beq}{\begin{equation}}
\newcommand{\eeq}{\end{equation}}
\newcommand{\bea}{\begin{eqnarray}}
\newcommand{\eea}{\end{eqnarray}}

\newcommand{\ZZ}{\mathbb Z}

\newcommand{\C}{\mathbb C}
\newcommand{\R}{\mathbb{R}}
\newcommand{\BB}{\mathbb{B}}
\newcommand{\EE}{\mathbb E}
\newcommand{\FF}{\mathbb F}
\newcommand{\GG}{\mathbb G}

\newcommand{\cD}{{\mathcal D}}

\newcommand{\cR}{{\mathcal R}}

\newcommand{\cDN}{{\mathcal DN}}

\newcommand{\q}{q}

\newcommand{\De}{{\mathcal D}}


\def\ab{&\hskip-3mm}
\def\eps{\varepsilon}

\newcommand{\N}{\mathbb N}
\newcommand{\Li}{\mathcal L}

\newcommand{\st}{\,;\,}

\DeclareMathSymbol{\complement}{\mathord}{AMSa}{"7B}


\def\vv<#1>{\langle #1\rangle}
\def\Vv<#1>{\bigl\langle #1\bigr\rangle}


\begin{document}
\baselineskip=15pt
\title[Two-phase Navier-Stokes equations]
{On the two-phase Navier-Stokes equations with surface tension}

\author[Jan Pr\"uss]{Jan Pr\"uss}
\address{Institut f\"ur Mathematik  \\
         Martin-Luther-Universit\"at Halle-Witten\-berg\\
         Theodor-Lieser-Str.~5\\
         D-60120 Halle, Germany}
\email{jan.pruess@mathematik.uni-halle.de}

\author[Gieri Simonett]{Gieri Simonett}
\address{Department of Mathematics\\
           Vanderbilt University
           Nashville, TN}
\email{gieri.simonett@vanderbilt.edu}

\thanks{The research of GS was partially
supported by NSF, Grant DMS-0600870.}

\begin{abstract}
The two-phase free boundary problem for the Navier-Stokes system is considered in a situation where the initial interface is close to a halfplane. By means of $L_p$-maximal regularity of the underlying linear problem we show local well-posedness of the problem, and prove that the solution, in particular the interface, becomes instantaneously real analytic.
\end{abstract}
\maketitle
\vspace{-0.4cm}
{\small\noindent
{\bf Mathematics Subject Classification (2000):}\\
Primary: 35R35. Secondary: 35Q10, 76D03, 76D45, 76T05.\vspace{0.1in}\\
{\bf Key words:} Navier-Stokes equations, surface tension,  well-posedness, analyticity. \vspace{0.2in}
\section{Introduction and Main Results}
In this paper we consider a free boundary problem
that describes the motion of two viscous incompressible capillary Newtonian fluids. 
The fluids are separated by an interface
that is unknown and has to be determined as part of the problem.
\\
Let $\Omega_{1}(0)\subset\R^{n+1}$ $(n\ge 1)$ 
be a region occupied by a viscous incompressible
fluid, $fluid_{1}$, 
and let $\Omega_{2}(0)$ be the complement of
the closure of $\Omega_{1}(0)$ in $\R^{n+1}$, corresponding
to the region occupied by a second incompressible viscous fluid,
$fluid_{2}$.
We assume that the two fluids are immiscible.
Let $\Gamma_0$ be the hypersurface that bounds $\Omega_1(0)$
(and hence also $\Omega_2(0)$) and let
$\Gamma(t)$ denote the position of $\Gamma_{0}$
at time~$t$. Thus, $\Gamma(t)$ is a sharp interface
which separates the fluids occupying the
regions $\Omega_1(t)$ and $\Omega_2(t)$, respectively,
where $\Omega_2(t):=\R^{n+1}\setminus\overline\Omega_1(t)$.
We denote the normal field on $\Gamma(t)$,
pointing from $\Omega_1(t)$ into $\Omega_2(t)$, by $\nu(t,\cdot)$.
Moreover, we denote by
$V(t,\cdot)$ and $\kappa(t,\cdot)$ the normal velocity and
the mean curvature of $\Gamma(t)$ with respect to $\nu(t,\cdot)$,
respectively. Here the curvature $\kappa(x,t)$ is assumed to be negative
when $\Omega_1(t)$ is convex in a neighborhood of $x\in\Gamma(t)$.
The motion of the  fluids is governed by the following system of
equations for $i=1,2:$
\begin{equation}
\label{NS-2phase}
\left\{
\begin{aligned}
\rho_i\big(\partial_tu+(u|\nabla)u\big)
                   -\mu_i\Delta u+\nabla \q
                       & = 0 &\ \hbox{in}\quad &\Omega_i(t)\\
    {\rm div}\,u & = 0 &\ \hbox{in}\quad &\Omega_i(t) \\
           -{[\![S(u,\q)\nu]\!]}& = \sigma\kappa \nu &\ \hbox{on}\quad &\Gamma(t)\\
            {[\![u]\!]}& = 0 &\ \hbox{on}\quad &\Gamma(t) \\
                      V& = (u| \nu) &\ \hbox{on}\quad &\Gamma(t)\\
             u(0)& = u_0&\ \hbox{in}\quad &\Omega_i(0)\\
              \Gamma(0)& = \Gamma_0\,.\\
\end{aligned}
\right.
\end{equation}
Here, $S=S(u,q)$ is the stress tensor defined by
\begin{equation*}
S(u,\q)=\mu_i\big(\nabla u+(\nabla u)^{\sf T}\big)-qI
\quad \text{in}\quad \Omega_i(t),
\end{equation*}
and
\begin{equation*}
[\![v]\!]=(v_{|_{\Omega_2(t)}}-v_{|_{\Omega_1(t)}}\big)|_{\Gamma(t)}
\end{equation*}
denotes the jump of the quantity $v$, defined on the respective
domains $\Omega_i(t)$, across the interface $\Gamma(t)$.
\smallskip\\
Given are the initial position $\Gamma_{0}$
of the interface, and the initial velocity
$$
u_0:\Omega_0\to \R^{n+1},\quad \Omega_0:=\Omega_1(0)\cup\Omega_2(0).
$$
The unknowns are the velocity field
$u(t,\cdot):\Omega(t)\to \R^{n+1}$,
the pressure field $\q(t,\cdot):\Omega(t)\to \R$,
and the free boundary $\Gamma(t)$,
where $\Omega(t):=\Omega_1(t)\cup\Omega_2(t)$.
\\
The constants $\rho_i>0$ and $\mu_i>0$ denote the densities
and the viscosities of the respective fluids,
and the constant $\sigma$ stands for the surface tension.
Hence the material parameters $\rho_i$ and $\mu_i$ depend
on the phase $i$, but otherwise are assumed to be constant.
System \eqref{NS-2phase} comprises the
{\em two-phase Navier-Stokes equations with surface tension}.
The first equation in \eqref{NS-2phase} reflects balance of momentum,
while the second expresses the fact that both fluids are incompressible.
If surface tension is neglected, the  boundary condition
on $\Gamma(t)$ would be the equality of stress on the two sides
of the surface. The effect of surface tension
introduces a discontinuity in the normal component
of $[\![S(u,q)]\!]$ proportional to the
mean curvature of $\Gamma(t)$.
The forth equation stipulates that the velocities are continuous
across $\Gamma(t)$.
Finally, the fifth equation, called the kinematic boundary condition,
expresses that fluid particles cannot cross $\Gamma(t)$.
\\
In order to economize our notation, we set
\begin{equation*}
\rho=\rho_1\chi_{\Omega_1(t)}+ \rho_2\chi_{\Omega_2(t)},
\quad
\mu=\mu_1\chi_{\Omega_1(t)}+\mu_2\chi_{\Omega_2(t)},
\end{equation*}
where $\chi_D$ denotes the indicator function of a set $D$.
With this convention, system \eqref{NS-2phase} can be recast as
\begin{equation}
\label{NS-2}
\left\{
\begin{aligned}
 \rho\big(\partial_tu+(u|\nabla)u\big)-\mu\Delta u+\nabla \q
                   &=  0\ &\hbox{in}\quad &\Omega(t)\\
      {\rm div}\,u & = 0\ &\hbox{in}\quad &\Omega(t) \\
     -{[\![S(u,\q)\nu]\!]}& = \sigma\kappa \nu &\hbox{on}\quad &\Gamma(t)\\
       {[\![u]\!]} & = 0\ &\ \hbox{on}\quad &\Gamma(t) \\
                  V& = (u|\nu) &\ \hbox{on}\quad &\Gamma(t)\\
               u(0)& = u_0&\ \hbox{in}\quad &\Omega_{0}\\
          \Gamma(0)& = \Gamma_0\,.\\
\end{aligned}
\right.
\end{equation}
In this publication we consider the case where $\Gamma_0$ 
is a graph over $\R^n$ given by a function $h_0$.
We then set $\Omega_1(0)=\{(x,y)\in\R^n\times\R: y<h_0(x)\}$
and consequently, $\Omega_2(0)=\{(x,y)\in\R^n\times\R: y>h_0(x)\}$.  
Our main result on existence, uniqueness, and regularity
of solutions then reads as follows.
\medskip
\begin{theorem}
\label{th:1.2}
Suppose $p>n+3$.
Then given $t_0>0$, there exists $\eps_0=\eps_0(t_0)>0$ such
that for any initial values
$$(u_0, h_0)\in W^{2-2/p}_p(\Omega_0,\R^{n+1})
         \times W^{3-2/p}_p(\R^n),
$$
satisfying the compatibility conditions
$$
[\![\mu D(u_0)\nu_0-\mu (\nu_0| D(u_0)\nu_0)\nu_0]\!]=0,
\quad {\rm div} \; u_0=0\; \text{ on }\; \Omega_0,
\quad [\![u_0]\!]=0,
$$
with $D(u_0):=(\nabla u_0+(\nabla u_0)^{\sf T})$,
and the smallness condition
$$
\|u_0\|_{W^{2-2/p}_p(\Omega_0)}
+ \|h_0\|_{W^{3-2/p}_p(\R^n)}\le \eps_0
$$
problem \eqref{NS-2}
admits a classical solution $(u,q,\Gamma)$ on $(0,t_0)$.
The solution is unique in the function class described in
Theorem~\ref{th:nonlinearII}.
In addition, $\Gamma(t)$
is a graph over $\R^n$ given
by a function $h(t)$,
$ {\mathcal M}=\bigcup_{t\in(0,t_0)}\big(\{t\}\times\Gamma(t)\big)$
is a real analytic manifold, and with
$$
\mathcal{O}\:=\{(t,x,y):\; t\in(0,t_0),\; x\in\R^n,
y\neq h(t,x)\},
$$
the function $(u,\q):{\mathcal O}\rightarrow\R^{n+2}$ is  real analytic.
\end{theorem}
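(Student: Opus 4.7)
\smallskip
\noindent
\textbf{Proof plan.}
The first step is to flatten the free boundary. Using the height function $h(t,\cdot)$ I would introduce a Hanzawa-type diffeomorphism $\Theta_h: \R^n\times\R \to \R^{n+1}$ that maps the half-spaces $\R^n\times\R_\pm$ onto $\Omega_2(t)$ and $\Omega_1(t)$ respectively. Pulling $(u,\q)$ back via $\Theta_h$ produces new unknowns $(v,\pi,h)$ defined on fixed domains; substituting into \eqref{NS-2} and linearizing around $(v,\pi,h)\equiv 0$ yields a two-phase Stokes problem with a flat interface, surface-tension jump condition, and kinematic equation, plus nonlinear remainders that depend on $(v,\pi,h)$ and their derivatives, including $\nabla^2 h$ through $\kappa$. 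The structural point is that all nonlinear contributions vanish together with their derivatives at $(v,\pi,h)=0$, so they can be controlled by lower-order norms once the solution is small in the maximal regularity space.

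The second step is to solve the nonlinear fixed-domain problem by a contraction argument in the $L_p$-maximal regularity class, invoking the underlying linear theory that has already been developed in the paper (to be used through \thmref{th:nonlinearII}). One chooses the ground space
\[
\E(t_0)=\big\{(v,\pi,h):\ \text{$v$, $\pi$ have the optimal regularity for the two-phase Stokes system on $(0,t_0)$},\ h\in W^{3-1/2p}_p\cdots\big\},
\]
extends the initial data to a reference function in $\E(t_0)$ using the maximal regularity of the linear problem (this uses precisely the compatibility conditions and the trace conditions stated in the theorem), and shows that the Nemytskii map associated with the nonlinearities is a self-map and a contraction on a small ball around this reference, provided $\eps_0$ is chosen sufficiently small depending on $t_0$. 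The condition $p>n+3$ guarantees the embedding $W^{2-2/p}_p\hookrightarrow C^1$ needed to make sense of the transformation and of the quadratic/higher-order remainders. This yields a unique classical solution on $(0,t_0)$ and the geometric statement that $\Gamma(t)=\{(x,h(t,x))\}$.

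The third step is real-analytic smoothing, for which I would use the parameter-trick of Masuda--Angenent--Escher--Pr\"uss--Simonett. For $(\lambda,\mu)$ in a small open neighborhood of $(1,0)$ in $\R\times\R^n$ and a fixed smooth cut-off, define the scaling-translation group $T_{\lambda,\mu}$ acting on functions of $(t,x)\in(0,t_0)\times\R^n$, and consider the transformed unknowns $(v_{\lambda,\mu},\pi_{\lambda,\mu},h_{\lambda,\mu})$. They solve a parameter-dependent nonlinear problem whose coefficients depend analytically on $(\lambda,\mu)$ and which, at $(\lambda,\mu)=(1,0)$, reduces to the one solved in Step~2. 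Because the linear principal part is an isomorphism between the maximal regularity spaces and because the nonlinearity and its derivative with respect to the solution depend analytically on the parameters, the implicit function theorem in Banach spaces gives real-analytic dependence of $(v_{\lambda,\mu},\pi_{\lambda,\mu},h_{\lambda,\mu})$ on $(\lambda,\mu)$. Evaluating at an arbitrary interior point then yields real analyticity of $(u,\q,h)$ jointly in $(t,x,y)$ on $\mathcal{O}$, and of $h$ on $(0,t_0)\times\R^n$, which in particular makes $\mathcal{M}$ a real-analytic manifold.

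The most delicate point is the bookkeeping in Step~2: verifying that every nonlinear term generated by the Hanzawa transformation, most importantly the quasilinear curvature expression and the $h$-dependent corrections to $\mathrm{div}$, $\nabla$ and $\Delta$, maps the maximal regularity space into itself with a small Lipschitz constant. This requires sharp trace and product estimates in anisotropic Sobolev--Slobodeckij spaces, and it is precisely here that the assumption $p>n+3$ (hence the embedding into $C^{1+\alpha}$ in space and $C^{\alpha}$ in time) is used to absorb the worst nonlinear terms into the smallness $\eps_0$.
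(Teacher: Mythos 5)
Your Steps 1 and 2 follow the paper's route essentially verbatim: the flattening is the shift $(x,y)\mapsto (x,h(t,x)+y)$ of Section~2 (a graph situation, so no cut-off is needed in the transformation), and the existence part is the contraction argument of Theorem~\ref{th:nonlinearII}, resting on the maximal regularity isomorphism of Theorem~\ref{th:5.1}, on the fact that the nonlinearity $N$ is real analytic with $N(0)=0$ and $DN(0)=0$ (Proposition~\ref{pro:estimates-K}), and on an auxiliary function $z^\ast$ built from the data which resolves the compatibility conditions. You should still record the short final step of the paper: the compatibility and smallness conditions stated for $u_0$ on $\Omega_0$ must be transferred to the flattened data via $\Theta_{h_0}(x,y)=(x,y+h_0(x))$, with norm equivalence constants controlled by $\|\nabla h_0\|_{BC^1(\R^n)}$; this is routine but it is where $\eps_0$ is actually fixed.

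The genuine gap is in Step 3. With the parameter family you propose, $(\lambda,\mu)\in\R\times\R^n$ acting by time scaling and tangential translation on functions of $(t,x)$, the implicit function theorem yields analytic dependence only in $(t,x)$; it gives no information whatsoever about regularity in the vertical variable, so the asserted conclusion that $(u,q)$ is real analytic jointly in $(t,x,y)$ on $\mathcal O$ does not follow. You also cannot repair this by adjoining translations in $y$: these move the interface $y=0$, where the transmission and jump conditions of the transformed problem are imposed, so the translated functions no longer solve a problem of the same form and the implicit function theorem has nothing to bite on. This is precisely why the paper works with a third parameter $\tau$ and the vertical dilation $y\mapsto y\bigl(1+\varphi(y)\tau t\bigr)$, with a cut-off $\varphi\equiv 1$ on $[-R,R]$: this map fixes $y=0$, is a small perturbation of the identity, and leads to the parameter-dependent problems \eqref{linFB-paramter} and \eqref{L-lambda} whose coefficients depend analytically on $(\lambda,\nu,\tau)$, so that the implicit function theorem applied to \eqref{Psi} gives joint analyticity in all variables. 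A related smaller omission: since $\pi$ only lies in $L_p(J;\dot H^1_p(\dot\R^{n+1}))$, its pointwise analyticity is not read off directly from the parameter trick; the paper first obtains analyticity of $u$, $[\![\pi]\!]$ and $h$, then of $\nabla\pi$ from the equations \eqref{2.1}--\eqref{2.2}, and finally of $\pi$ itself after a normalization.
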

\begin{remarks}
(a) Theorem 1.1 shows that solutions immediately regularize and 
become analytic in space and time.
If one thinks of the situation of oil in contact with water,
this result seems plausible, as capillary forces
tend to smooth out corners in the interface separating
the two different fluids.
\smallskip\\
(b)
More precise statements for a transformed version of problem  
\eqref{NS-2} will be given in Section 6. 
Due to the restriction $p>n+3$, we shall show that
\begin{equation}
\label{h-continuous}
h\in C(J;BU\!C^2(\R^n))\cap C^1(J;BU\!C^1(\R^n))
\end{equation}
where $J=[0,t_0]$. In particular, the normal of $\Omega_1(t)$, the
normal velocity of $\Gamma(t)$, and the mean curvature of $\Gamma(t)$
are well-defined and continuous, so that \eqref{NS-2} makes sense
pointwise. 
For $u$ and $q$ we obtain
\begin{equation}
\label{u-continuous}
\begin{split}
&u(t,\cdot)\in BU\!C^1(\Omega(t),\R^{n+1})\;\;\text{for}\; t\in J,
\quad u\in BU\!C(J\times\R^{n+1},\R^{n+1}), 
\\
&q(t,\cdot)\in U\!C(\Omega(t))\;\; \text{for}\; t\in J\setminus\{0\}.
\end{split}
\end{equation}
In addition, the solution $(u,\q,h)$ depends continuously on the 
initial values $(u_0,h_0)$.
Also interesting is the fact that the surface pressure jump
will turn out to be real analytic as well.
\smallskip\\
(c)
It is possible to relax the assumption $p>n+3$. In fact, $p>(n+3)/2$ can be shown to be sufficient. However, to keep the arguments as simple as possible, here we impose the stronger condition $p>n+3$.
\smallskip\\
(d)
If gravity acts on the fluids then the condition on the free boundary 
is to be replaced by
\begin{equation}
\label{gravitity}
-[\![S(u,q)]\!]\nu=\sigma H\nu+\gamma [\![\rho]\!]y \nu\quad\text{on}\quad\Gamma(t),
\end{equation}
where $y$ denotes the vertical component of a generic point on 
$\Gamma(t)$, and where $\gamma>0$ is the gravity acceleration.
It is not very difficult to verify that our approach also covers this case,
yielding a solution having the same regularity properties
as stated in the theorem above, provided
$\rho_2\le\rho_1$, i.e. the heavier fluid lies beneth the lighter one.
Indeed, an analysis of our proof shows that we only need to replace 
the symbol $s(\lambda,\tau)$ introduced in~\eqref{s-lambda-tau} by
\begin{equation*}
 s(\lambda,\tau)=\lambda + \sigma\tau k(z)-\frac{\gamma [\![\rho]\!]}{\tau}k(z). 
\end{equation*}
It satisfies the same estimates as in 
\eqref{estsymb} in case that $\rho_2\le\rho_1$.
\smallskip\\
(e) We mention that our results also cover
the one phase Navier-Stokes equations with surface tension
\eqref{NS-1phase}.  
\smallskip\\
(f) 
The solutions we obtain exist on an interval $(0,t_0)$ 
with $t_0>0$ arbitrary, but fixed,
provided the initial data are sufficiently small.
It can be shown that problem~\eqref{NS-2} also
admits unique local solutions 
that enjoy the same regularity properties as above, provided 
$\sup_{x\in\R}|\nabla h_0|$ is sufficiently small
in relation to the horizontal component of $u_0$.
In this case, no other smallness conditions on the data are required.
The proof of this result is considerably more involved,
and the analysis requires delicate estimates
for the nonlinear terms.
Additionally, we need a modified version of Theorem 5.1
in order to dominate some of the nonlinear terms by linear ones.
The proof of this modification will involve 
introducing a countable partition of unity and
then establishing commutator estimates for certain pseudo-differential operators.  
Since this paper is already rather long, we 
refrain from including a proof of this result here.
It will be contained in the forthcoming paper~\cite{PrSi09b}.
\end{remarks}
\smallskip
\noindent
Let us now discuss and contrast our results with results
previously obtained by other researchers.
In case $\Omega_2(t)=\emptyset$ one obtains the
{\em one-phase Navier-Stokes equations with surface tension}
\begin{equation}
\label{NS-1phase}
\left\{
\begin{aligned}
  \rho\big(\partial_tu +(u|\nabla)u\big)-\mu \Delta u +\nabla q
                     & = 0 &\ \hbox{in}\quad &\Omega (t)\\
        {\rm div}\,u & = 0 &\ \hbox{in}\quad &\Omega (t) \\
            {S}(u,\q)\nu & = \sigma\kappa\nu &\ \hbox{on}\quad &\Gamma(t) \\
                    V& = (u|\nu) &\ \hbox{on}\quad &\Gamma(t)\\
                 u(0)& = u_0&\ \hbox{in}\quad &\Omega_{0}\\
            \Gamma(0)& = \Gamma_0\,.\\
\end{aligned}
\right.
\end{equation}
Equations \eqref{NS-1phase}
describe the motion of an isolated
liquid which moves due to capillary forces acting on the free boundary.
\smallskip\\
Problem \eqref{NS-1phase} has received wide attention 
in the last two decades or so.
Existence and uniqueness of solutions
for $\sigma>0$, as well as for $\sigma=0$,
in case that $\Omega(0)$ is bounded 
(corresponding to an isolated fluid drop) has been
extensively studied in a long series of papers by Solonnikov,
see for instance \cite{So84}--\cite{So03b} and \cite{MoSo92} 
for the case $\sigma>0$.
Solonnikov proves existence and uniqueness results in various function
spaces, including anisotropic H\"older and Sobolev-Slobodetskii spaces.
Moreover, it is shown in \cite{So86} that if $\Omega_{0}$ is sufficiently close
to a ball and the initial velocity $u_{0}$ is sufficiently small,
then the solution exists globally, and converges to a uniform rigid rotation 
of the liquid about a certain axis which is moving uniformly with a constant speed,
see also \cite{PaSo02}.
More recently, local existence and uniqueness
of solutions for \eqref{NS-1phase} 
(in case that $\Omega$ is a bounded domain,
a perturbed infinite layer, or a perturbed half-space)
in  anisotropic Sobolev spaces $W^{2,1}_{p,q}$
with $2<p<\infty$ and $n<q<\infty$ has been
established by Shibata and Shimizu in \cite{SS09, SS09b}.
For results concerning \eqref{NS-1phase} with $\sigma=0$
we refer to the recent contributions 
\cite{SS07, SS08} and the references therein.
\smallskip\\
The motion of a layer of  viscous, 
incompressible fluid in an ocean of infinite extent,
bounded below by a solid surface and above by a free surface which includes
the effects of surface tension and gravity
(in which case $\Omega_0$ is a strip, bounded above
by $\Gamma_0$ and below by a fixed surface $\Gamma_b$)
is considered by
Allain~\cite{Al87}, 
Beale~\cite{Bea84}, Beale and Nishida~\cite{BeaNi84},
Tani~\cite{Ta96}, and by Tani and Tanaka~\cite{TT95}.
If the initial state and the initial velocity are close
to equilibrium, global existence of solutions is proved
in \cite{Bea84} for $\sigma>0$,
and in \cite{TT95} for $\sigma\ge 0$,
and the asymptotic decay rate for $t\to\infty$
is studied in~\cite{BeaNi84}.
\smallskip\\
Results concerning the {\em two-phase problem}~\eqref{NS-2} are more recent.
Existence and uniqueness of local solutions 
is studied in \cite{Deni91, Deni94, DS95,Tanaka93}.
In more detail, Densiova~\cite{Deni94} 
establishes existence and uniqueness of solutions
(of the transformed problem in Lagrangian coordinates)
with $v\in W^{r,r/2}_2$ 
for $r\in (5/2,3)$
in case that one of the domains is bounded.
Tanaka \cite{Tanaka93} considers the two-phase Navier-Stokes equations
with thermo-capillary convection in bounded domains, and he obtains
existence and uniqueness of solutions 
with $(v,\theta)\in W^{r,r/2}_2$ 
for $r\in (7/2,4)$,
with $\theta$ denoting the temperature.
\smallskip\\
The approach used by Solonnikov, and also
in \cite{Deni91}--\cite{DS95}, \cite{SS07,SS08,SS09,SS09b,Tanaka93,Ta96,TT95},
relies on a formulation of systems \eqref{NS-2}
and \eqref{NS-1phase} in Lagrangian coordinates.
In this formulation one obtains a transformed
problem for the velocity and the pressure on a fixed domain, 
where the free boundary does not occur explicitly. The free boundary 
can then be obtained  by
$$
\Gamma(t)=\big\{\xi+\int_{0}^{t}v(\tau,\xi)d\tau: \xi\in \Gamma_0\big\},
$$
where $v$ is the velocity field in Lagrangian coordinates.
It is not clear whether this formulation allows one to
obtain smoothing results for the free boundary,
as the regularity of $\Gamma(t)$ seems to be restricted
by the regularity of $\Gamma_0$.
To the best of our knowledge, the regularity of the
free boundary for the Navier-Stokes equations 
with surface tension \eqref{NS-2phase} or \eqref{NS-1phase}
has not been addressed in the literature before,
with the notable exception of \cite{Bea84}.
Beale considers the ocean problem
with $\Omega(t)=\{(x,y)\in \R^2\times\R: -b(x)< y< h(t,x)\}$
and he shows by a boot-strapping argument that solutions
are $C^k$ for any given fixed $k\in\N$,
where the size of the initial data must be adjusted
in dependence of $k$.
As in our case, his approach does not rely
on a formulation in Lagrangian coordinates.

In order to prove our main result we 
transform problem \eqref{NS-2} into a problem on a fixed domain.
The transformation is expressed in terms of the
unknown height function $h$ describing the free boundary.
Our analysis proceeds with studying solvability properties
of some associated linear problems.
It is important to point out that we succeed in establishing
optimal solvability results 
(also referred to as as maximal regularity),
see Theorem~\ref{th:3.1}, Proposition~\ref{pro:3.3},
Theorem~\ref{th:4.1}, Corollary~\ref{co:4.2} and Theorem~\ref{th:5.1}.
In other words, we show that the linear problems define
an isomorphism between properly chosen function spaces.
This property, in turn, allows us to resort to the
implicit function theorem to establish the analyticity of 
solutions to the nonlinear problem, as will be pointed out below.
All our results for the associated linear problems mentioned above 
seem to be new, 
as they give sufficient as well as necessary conditions for solvability.
Our analysis is greatly facilitated by 
studying the Dirichlet-to-Neumann operator for the
Stokes equations, see Section~4.
It is interesting, and maybe even surprising,
to observe the mapping properties of this operator, see Theorem 4.1.
Our approach for establishing solvability results
relies on the powerful theory of maximal regularity,
in particular on the  $H^\infty$-calculus for sectorial
operators, the Dore-Venni theorem, and the Kalton-Weis theorem, 
see for instance~\cite{DHP03, KW01, KuWe04, PrSi06}.

Based on the linear estimates we can solve the nonlinear problem
by the contraction mapping principle.
Analyticity of the solution is obtained in a rather short and
elegant way by the implicit function theorem in conjunction
with a scaling argument, relying on an idea that goes back
to Angenent \cite{Ang90a, Ang90b} and Masuda \cite{Ma80};
see also \cite{ES96, ES03, EPS03b}. 
More precisely, by introducing parameters which represent scaling in time,
and translation in space, the implicit function theorem
yields analytic dependence of the solution 
of a parameter dependent-problem on the parameters,
and this can be translated to a smoothness result
in space and time for the original problem.

The plan for this paper is as follows. Section 2 contains the
transformation of the problem to a half-space and the determination
of the proper underlying linear problem.
In Sections 3, 4 and 5 we study this linearization and prove in particular the
crucial maximal regularity results in an $L_p$-setting. Section~6
is then devoted to the nonlinear problem and contains the proof
of our main result.
\section{Reduction to a Flat Interface}
In this section we first transform the free boundary problem \eqref{NS-2}
to a fixed domain, and we then introduce some
function spaces that will be used throughout the paper.
Suppose that $\Gamma(t)$ is a graph over $\R^n$, parametrized as
$$
\Gamma(t)=\{(x,h(t,x)): \; x\in\R^n\},\quad t\in J,
$$
with $\Omega_2(t)$ lying ``above'' $\Gamma(t)$, i.e.
$
\Omega_2(t)=\{ (x,y)\in\R^n\times\R: y>h(t,x)\}$ for $t\in J:=[0,a]$.
Reduction from deformed into true halfspaces is achieved by means
of the transformations
\bea
&&v(t,x,y)=\left[\begin{array}{l}
                  u_1(t,x,h(t,x)+ y)\\
                  \vdots\\
                  u_n(t,x,h(t,x)+ y)
                 \end{array}  \nonumber\right],\\
&&w(t,x,y)=u_{n+1}(t,x,h(t,x) + y),\nonumber\\
&&\pi(t,x,y)=\q(t,x,h(t,x)+ y),\nonumber
\eea
where $t\in J$, $x\in \R^n$, $y\in\R$, $y\neq0$.
Since for $j,k=1,\ldots,n$ we have
\begin{equation}
\begin{aligned}
\partial_ju_k &=\partial_jv_{ k}-\partial_jh\partial_yv_{ k},\quad
\partial_{n+1}u_k =\partial_yv_{ k}, \\
\partial_ju_{n+1} &=\partial_jw -\partial_jh\partial_yw,\quad
\partial_{n+1}u_{n+1} =\partial_yw, \\
\partial_j\q &=\partial_j\pi -\partial_jh\partial_y\pi,\quad
\partial_{n+1}\q =\partial_y\pi,  \\
\partial_t u_k &=\partial_tv_{k} -\partial_th\partial_y v_{ k},\quad
\partial_t u_{n+1} =\partial_t w -\partial_th\partial_y w,
\end{aligned}
\end{equation}
and
\begin{equation*}
\begin{aligned}
&\Delta u_k=\Delta_{x}v_{ k}- 2(\nabla h|\nabla_{x})\partial_yv_{ k} +(1+|\nabla h|^2)
\partial^2_yv_{ k}-\Delta h\partial_yv_{k}, \\
&\Delta u_{n+1}=\Delta_{x}w- 2(\nabla h|\nabla_{x})\partial_yw +(1+|\nabla h|^2)
\partial^2_yw-\Delta h\partial_yw,
\end{aligned}
\end{equation*}
we obtain from \eqref{NS-2} the following quasilinear system with initial
conditions
\begin{equation}
\label{2.1}
\left\{
\begin{aligned}
\rho\partial_t v-\mu\Delta_{x} v-\mu\partial^2_y v+\nabla_{x}\pi  &=F_v(v,w,\pi,h)  &\ \hbox{in}\quad &(0,\infty)\times\dot\R^{n+1}\\
\rho\partial_t w-\mu\Delta_{x} w-\mu\partial^2_y w+\partial_y\pi
&=F_w(v,w,h)&\ \hbox{in}\quad &(0,\infty)\times\dot\R^{n+1}\\
{\rm div}_{x}v+\partial_y w &=F_d(v,h) &\ \hbox{in}\quad &(0,\infty)\times \dot\R^{n+1}\\
v(0,x,y)=v_0(x,y),\; w(0,x,y)&=w_0(x,y) &\ \hbox{in}\quad &\dot\R^{n+1}
\end{aligned}
\right.
\end{equation}
where $\dot \R^{n+1}=\{(x,y)\in\R^n\times\R\st y\neq 0\}$.
Here and in the sequel, $\nabla h$ and $\Delta h$ always
denote the gradient and the Laplacian of $h$ with respect to $x\in\R^n$.
Note that $\rho$ and $\mu$ in general have jumps at $y=0$, i.e.\
$\rho=\rho_2$ for $y>0$, $\rho=\rho_1$ for $y<0$, and similarly for $\mu$.
The nonlinearities are given by
\begin{equation}
\label{2.2}
\begin{split}
F_v(v,w,\pi,h)&=\mu\{- 2(\nabla h|
\nabla_{x})\partial_yv +|\nabla h|^2
\partial^2_yv-\Delta h\partial_yv\}+\partial_y \pi\nabla h \\
&\quad
+\rho\{-(v|\nabla_{x})v
+(\nabla h|v)\partial_yv-
w\partial_yv\}
+ \rho\partial_th \partial_y v, \\
F_w(v,w,h)&=\mu\{- 2(\nabla h|
\nabla_{x})\partial_yw +|\nabla h|^2
\partial^2_yw -\Delta h\partial_yw\} \\
&\quad  +\rho\{-(v|\nabla_{x})w
+(\nabla h|v)\partial_yw- w\partial_yw\}
+ \rho\partial_th \partial_y w, \\
F_d(v,h)&= (\nabla h|\partial_y v).
\end{split}
\end{equation}
Note that these functions are  polynomials in the derivatives of
$(v,w,\pi,h)$, hence analytic, and linear with respect to second
derivatives, with coefficients of first order. This exhibits the quasilinear
character of the problem.

To obtain the transformed interface conditions we observe that
the outer normal $\nu$ of $\Omega_1(t)$ is given by
\begin{equation*}
\nu(t,x)=\frac{1}{\sqrt{1+|\nabla h(t,x)|^2}}
               \left[\begin{array}{c}
                     -\nabla h(t,x)\\ 1
                     \end{array}\right],
\end{equation*}
where, as above, $\nabla h(t,x)$ denotes the
gradient vector of $h$ with respect to $x\in\R^n$.
The normal velocity $V$ of $\Gamma(\cdot)$ is
\begin{equation*}
V(t,x)=\partial_th(t,x)/\sqrt{1+|\nabla h(t,x)|^2}.
\end{equation*}
The kinematic condition $V=(u|\nu)$ on $\Gamma(\cdot)$ now reads as
\begin{equation}
\label{H}
\partial_t h-\gamma w=H(v,h),\qquad H(v,h):=-(\gamma v| \nabla h).
\end{equation}
Here $(\gamma w)(x):=w(x,0)$ denotes the trace of
the function $w:\dot \R^{n+1}\to \R$ and, correspondingly,
$\gamma v$ is the trace of
$v:\dot\R^{n+1}\to \R^n$.
Since $u$ is continuous across $\Gamma(t)$,
$\gamma v$ and $\gamma w$ are unambiguously defined.
It is also noteworthy to observe that
the tangential derivatives of
$v$ and $w$ are continuous across $\R^n$.
The curvature of $\Gamma(t)$ is given by
\begin{equation*}
\label{2.4}
\kappa(t,x)={\rm div}_x\left(
\frac{\nabla h(t,x)}
{\sqrt{1+|\nabla h(t,x)|^2}}\right)
=\Delta h-G_\kappa(h),
\end{equation*}
see for instance equation (24) in \cite[Appendix]{BPS05},
with
\begin{equation}
\label{2.41}
G_\kappa(h)=\frac{|\nabla h|^2\Delta h}
                {(1+\sqrt{1+|\nabla h|^2})
                \sqrt{1+|\nabla h|^2}}
   +\frac{(\nabla h| \nabla^2 h\nabla h)}
                {(1+|\nabla h|^2)^{3/2}},
\end{equation}
where $\nabla^2 h$ denotes the Hessian matrix of all second
order derivatives of $h$.
The components of $\De(v,w,h)$, the transformed version of the deformation tensor
$D(u)=(\nabla u+(\nabla u)^{\sf T})$, are given by
\begin{equation}
\label{D}
\begin{split}
 \De_{ij}(v,w,h)=\ & \partial_iv_{ j}+\partial_jv_{ i}
-(\partial_ih\partial_yv_{ j}+\partial_j h\partial_yv_{ i}),
\\
 \De_{n+1,j}(v,w,h)=\ &\De_{j,n+1}(v,w,h)
 =\partial_y v_{ j}+\partial_jw-\partial_jh \partial_yw,\\
 \De_{n+1,n+1}(v,w,h)=\ & 2\partial_y w,
 \end{split}
\end{equation}
for $i,j=1,\ldots,n$, where $\delta_{ij}$ denotes the Kronecker symbol.
For the jumps of the components of the deformation tensor this yields
\begin{equation*}
\begin{split}
[\![\mu\De_{ij}(v,w,h)]\!] =\ &
[\![\mu(\partial_iv_{j}+\partial_jv_{ i})]\!]
-\partial_i h[\![\mu\partial_yv_{j}]\!]
-\partial_j h[\![\mu\partial_yv_{ i}]\!],
\\
[\![\mu\De_{n+1,j}(v,w,h)]\!] =\ &[\![\mu\De_{j,n+1}(v,w,h)]\!]
=[\![\mu\partial_j w]\!]+[\![\mu\partial_yv_{j}]\!]-\partial_jh
[\![\mu\partial_yw]\!],\\
 [\![\mu\De_{n+1,n+1}(v,w,h)]\!] =\ &2[\![\mu\partial_y w]\!].
\end{split}
\end{equation*}
Therefore, the jump condition for the normal stress at the interface yields the
following boundary conditions:
\begin{equation}
\label{2.6}
\begin{split}
 -[\![\mu\partial_yv]\!]-[\![\mu\nabla_x w]\!]
  =\  &G_v(v,w,[\![\pi]\!],h),\\
 -2[\![\mu\partial_yw]\!]+[\![\pi]\!]-\sigma\Delta h
 =\ &G_w(v,w,h),
\end{split}
\end{equation}
where the nonlinearities $(G_v,G_w)$ have the form
\begin{equation}
\label{2.7}
\begin{split}
G_v(v,w,[\![\pi]\!],h)&\!=\!
-[\![\mu(\nabla_{x}v+(\nabla_{x}v)^{\sf T})]\!]\nabla h
+|\nabla h|^2[\![\mu\partial_yv]\!]
+ (\nabla h|\,[\![\mu\partial_yv]\!])\nabla h\\
&\quad -[\![\mu\partial_y w]\!]\nabla h
 + \{[\![\pi]\!]-\sigma(\Delta h-G_\kappa(h))\}\nabla h, \\
\ \ G_w(v,w,h)&\!=\!
- (\nabla h|\, [\![\mu\partial_yv]\!])
- (\nabla h|\, [\![\mu\nabla_{x}w]\!])
+ |\nabla h|^2  [\![\mu\partial_yw]\!]
-\sigma G_\kappa(h).
\end{split}
\end{equation}
We note that $G=(G_v,G_w)$ is anaytic in $(v,w,[\![\pi]\!],h)$.
Moreover, $G$ is linear in $(v,w,[\![\pi]\!])$, and in the second derivatives of $h$.
Thus the boundary conditions are quasilinear as well.

Summarizing, we arrive at the following
problem  for $u=(v,w)$, $\pi$, and $h$:
\begin{equation}
\label{tfbns2}
\left\{
\begin{aligned}
\rho\partial_tu -\mu\Delta u+\nabla \pi&= F(u,\pi,h)
    &\ \hbox{in}\quad &\dot\R^{n+1}\\
{\rm div}\,u&= F_d(u,h)&\ \hbox{in}\quad &\dot\R^{n+1}\\
-[\![\mu\partial_y v]\!] -[\![\mu\nabla_{x}w]\!] &=G_v(u,[\![\pi]\!],h)
    &\ \hbox{on}\quad &\R^n\\
-2[\![\mu\partial_y w]\!] +[\![\pi]\!] -\sigma\Delta h &= G_w(u,h)
    &\ \hbox{on}\quad &\R^n\\
[\![u]\!] &=0 &\ \hbox{on}\quad &\R^n\\
\partial_th-\gamma w&=H(u,h) &\ \hbox{on}\quad &\R^n\\
u(0)=u_0,\; h(0)&=h_0, \\\end{aligned}
\right.
\end{equation}
for $t>0$.
This is problem \eqref{NS-2} transformed to the half-spaces
$\R^{n+1}_\pm:=\{(x,y)\in\R^n\times\R: \pm y>0\}$. 
\medskip\\
Before studying solvability results for problem \eqref{tfbns2}
let us first introduce suitable function spaces.
Let $\Omega\subseteq\R^m$ be open and $X$ be
an arbitrary Banach space.
By $L_p(\Omega;X)$ and $H^s_p(\Omega;X)$,
for $1\le p\le\infty$, $s\in\R$, we denote the $X$-valued Lebegue and
the Bessel potential spaces of order $s$, respectively.
We will also frequently make use of the fractional
Sobolev-Slobodeckij
spaces $W^s_p(\Omega;X)$, $1\le p< \infty$,
$s\in\R\setminus\ZZ$, with norm
\begin{equation}
\label{Slobodeskii}
         \|g\|_{W^s_p(\Omega;X)}
         =\|g\|_{W^{[s]}_p(\Omega;X)}
         +\sum_{|\alpha|=[s]}
          \left(\int_\Omega\int_\Omega
          \frac{\|\partial^\alpha g(x)-\partial^\alpha g(y)\|^p_{X}}
          {|x-y|^{m+(s-[s])p}}\, dx\, dy\right)^{\!1/p}{\hskip-4mm},
\end{equation}
where $[s]$ denotes the largest integer smaller than $s$.
Let $a\in(0,\infty]$ and $J=[0,a]$.
We set
\[
         _0W^s_p(J;X):=\left\{
         \begin{array}{l}
         \{g\in W^s_p(J;X): g(0)=g'(0)=\ldots=g^{(k)}(0)=0\},\\[3mm]
         \mbox{if}\quad
         k+\frac1{p}<s<k+1+\frac1{p}, \ k\in\N\cup\{0\},\\[3mm]
         W^s_p(J;X),\quad \mbox{if}\quad s<\frac1{p}.
         \end{array}
         \right.
\]
The spaces $_0H^s_p(J;X)$ are defined analogously.
Here we remind that $H^k_p=W^k_p$ for $k\in\ZZ$ and $1<p<\infty$,
and that $W^s_p=B^s_{pp}$ for $s\in\R\setminus\ZZ$.
\\
For $\Omega\subset\R^m $ open and $1\le p<\infty$,
the { homogeneous Sobolev spaces} $\dot H^1_p(\Omega)$ of order $1$
are defined as
\begin{equation}
\label{dot-H-1}
\begin{split}
&\dot H^1_p(\Omega):=(\{g\in L_{1,\text{loc}}(\Omega):
\|\nabla g\|_{L_p(\Omega)}<\infty\},\|\cdot\|_{\dot H^1_p(\Omega)}) \\
&\|g\|_{\dot H^1_p(\Omega)}:=
\big(\sum\limits_{j=1}^m \|\partial_j g\|^p_{L_p(\Omega)}\big)^{\!1/p}.
\end{split}
\end{equation}
Then $\dot H^1_p(\Omega)$
is a Banach space, provided we factor out the constant functions
and equip the resulting space with the corresponding quotient norm,
see for instance~\cite[Lemma II.5.1]{Ga94}.
We will in the sequel always consider the quotient space topology
without change of notation.
In case that $\Omega$ is locally Lipschitz, it is known that
$\dot H^1_p(\Omega)\subset H^1_{p,\text{loc}}(\overline\Omega)$,
see \cite[Remark II.5.1]{Ga94}, and consequently,
any function in  $\dot H^1_p(\Omega)$ has a well-defined trace on
$\partial\Omega$.

For $s\in\R$ and $1<p<\infty$ we
also consider the {homogeneous Bessel-potential spaces} $\dot H^s_p(\R^n)$
of order $s$, defined by
\begin{equation}
\label{dot-H-s}
\begin{split}
&\dot H^s_p(\R^n):=(\{g\in {\mathcal S}^\prime(\R^n): \dot I^sg\in L_p(\R^n)\},
\|\cdot\|_{\dot H^s_p(\R^n)}),\\
& \|g\|_{\dot H^s_p(\R^n)}:=\|\dot I^sg\|_{L_p(\R^n)},
\end{split}
\end{equation}
where ${\mathcal S^\prime}(\R^n)$
denotes the space of all tempered distributions,
and $\dot I^s$ is the Riesz potential given by
$$
\dot I^s g:=(-\Delta)^{s/2}g:=
\mathcal F^{-1}(|\xi|^s \mathcal Fg),\quad g\in\mathcal S^\prime(\R^n).
$$
By factoring out all polynomials,
$\dot H^s_p(\R^n)$
becomes a Banach space with the natural quotient norm.
For $s\in\R\setminus\ZZ$,
the homogeneous Sobolev-Slobodeckij spaces $\dot W^s_p(\R^n)$
of fractional order
can be obtained  by real interpolation as
\begin{equation*}
\dot W^s_p(\R^n):=(\dot H^k_p(\R^n),\dot H^{k+1}_p(\R^n))_{s-k,p},
\quad k<s<k+1,
\end{equation*}
where $(\cdot,\cdot)_{\theta,p}$ is the real interpolation method.
It follows that
\begin{equation}
\label{I-s-isom}
\dot I^s\in
\text{Isom}(\dot H^{t+s}_p(\R^n),\dot H^t_p(\R^n))\cap
\text{Isom}(\dot W^{t+s}_p(\R^n),\dot W^t_p(\R^n)),
\quad s,t\in\R,
\end{equation}
with $\dot W^k_p=\dot H^k_p$ for $k\in\ZZ$.
We refer to \cite[Section 6.3]{BL76} and \cite[Section 5]{Tr83}
for more information on homogeneous functions spaces.
In particular, it follows from
parts (ii) and (iii) in \cite[Theorem 5.2.3.1]{Tr83}
that the definitions \eqref{dot-H-1} and \eqref{dot-H-s}
are consistent if $\Omega=\R^n$, $s=1$, and $1<p<\infty$.
We note in passing that
\begin{equation}
\label{norms-homogeneous}
          \left(\int_{\R^n}\int_{\R^n}
          \frac{|g(x)-g(y)|^p}
          {|x-y|^{n+sp}}\, dx\, dy\right)^{\!1/p}\!\!,
\ \
\left(\int_0^\infty t^{(1-s)p}\|\frac{d}{dt}P(t)g\|^p_{L_p(\R^n)}\frac{dt}{t}
\right)^{\!1/p}\!\!
\end{equation}
define equivalent norms on $\dot W^s_p(\R^n)$ for $0<s<1$,
where $P(\cdot)$ denotes the Poisson semigroup,
see \cite[Theorem 5.2.3.2 and Remark 5.2.3.4]{Tr83}.
Moreover,
\begin{equation}
\label{trace-homogeneous}
\gamma_{\pm}\in {\Li}(\dot W^1_p(\R^{n+1}_\pm), \dot W^{1-1/p}_p(\R^n)),
\end{equation}
where $\gamma_{\pm}$ denotes the trace operators,
see for instance \cite[Theorem II.8.2]{Ga94}.

\section{The Linearized Two-Phase Stokes Problem}
In this section we consider the linear two-phase
(inhomogeneous) Stokes problem
\begin{equation}
\label{linStokes}
\left\{
\begin{aligned}
\rho\partial_tu -\mu\Delta u+\nabla \pi&=f
    &\ \hbox{in}\quad &\dot\R^{n+1}\\
{\rm div}\,u&= f_d&\ \hbox{in}\quad &\dot\R^{n+1}\\
-[\![\mu\partial_y v]\!] -[\![\mu\nabla_{x}w]\!]&=g_v
    &\ \hbox{on}\quad &\R^n\\
-2[\![\mu\partial_y w]\!] +[\![\pi]\!] &=g_w
    &\ \hbox{on}\quad &\R^n\\
[\![u]\!] &=0 &\ \hbox{on}\quad &\R^n\\
u(0)&=u_0 &\ \hbox{in}\quad &\dot\R^{n+1}.\\
\end{aligned}
\right.
\end{equation}
Here the initial value $u_0$ as well as the inhomogeneities
$(f,f_d, g_v,g_w)$ are given. We want to establish
maximal regularity for this problem in the framework of $L_p$-spaces.
Thus we are interested in solutions
$(u,\pi)$ in the class
$$ 
u\in H^1_p(J;L_p(\R^{n+1},\R^{n+1}))
 \cap L_p(J;H^2_p(\dot{\R}^{n+1},\R^{n+1})),
\quad \pi\in L_p(J;\dot{H}^1_p(\dot{\R}^{n+1})).
$$
We remind here that  $J=[0,a]$ and
$\dot{\R}^{n+1}=\{(x,y)\in\R^{n}\times\R:\, y\neq0\}$.
If $(u,\pi)$ is a solution of (\ref{linStokes}) in this class
we necessarily have $f\in L_p(J;L_p(\R^{n+1}))$,
and additionally $u_0\in W^{2-2/p}_p(\dot{\R}^{n+1},\R^{n+1})$ by trace theory.
Moreover,
$$ 
f_d\in H^1_p(J;\dot{H}^{-1}_p(\R^{n+1}))
  \cap L_p(J;H^1_p(\dot{\R}^{n+1})),
$$   
as the operator ${\rm div}$ maps $L_p(\R^{n+1})$
onto $\dot{H}^{-1}_p(\R^{n+1})$.
Taking  traces at the interface $y=0$ results in
$ g_v\in W^{1/2-1/2p}_p(J;L_p(\R^{n},\R^n))
\cap L_p(J;W^{1-1/2p}_p(\R^{n},\R^n)),
$
and
$g_w\in L_p(J;\dot{W}^{1-1/p}_p(\R^n)).$
If, in addition,
$$[\![\pi]\!]\in W^{1/2-1/2p}_p(J;L_p(\R^{n}))
\cap L_p(J;W^{1-1/2p}_p(\R^{n}))
$$ then $g_w$
shares this regularity.

The main result  of this section states the converse of these assertions, i.e.\ maximal $L_p$-regularity for \eqref{linStokes}.
\begin{theorem}
\label{th:3.1}
Let $1<p<\infty$ be fixed, $p\neq 3/2,3$, and assume that $\rho_j$ and $\mu_j$ are positive
constants for $j=1,2$, and set $J=[0,a]$.
Then the  Stokes problem \eqref{linStokes} admits a unique solution $(u,\pi)$ with regularity
 $$u\in H^1_p(J;L_p(\R^{n+1},\R^{n+1}))
\cap L_p(J;H^2_p(\dot{\R}^{n+1},\R^{n+1})),
\quad \pi\in L_p(J;\dot{H}^1_p(\dot{\R}^{n+1})),$$
if and only if the data
$(f,f_d,g_v,g_w,u_0)$
satisfy the following regularity and compatibility conditions:
\begin{itemize}
\item[(a)]
$f\in L_p(J;L_p(\R^{n+1},\R^{n+1}))$,
\vspace{1mm}
\item[(b)] 
$f_d\in H^1_p(J; \dot{H}^{-1}_p(\R^{n+1}))\cap L_p(J; H^1_p(\dot{\R}^{n+1}))$,
\vspace{1mm}
\item[(c)]
$g_v\in W^{1/2-1/2p}_p(J;L_p(\R^{n},\R^n))\cap L_p(J;W^{1-1/p}_p(\R^{n},\R^n))$,\\
$g_w\in L_p(J;\dot W^{1-1/p}_p(\R^n))$,
\vspace{1mm}
\item[(d)]
$u_0\in W^{2-2/p}_p(\dot{\R}^{n+1},\R^{n+1})$,
\vspace{1mm}
\item[(e)]
${\rm div}\, u_0=f_d(0)$ in $\,\dot\R^{n+1}$ and $[\![u_0]\!]=0$
on $\,\R^n$ if $p>3/2$,
\vspace{1mm}
\item[(f)]
$-[\![\mu\partial_y v_0]\!] -[\![\mu\nabla_{x}w_0]\!] ={g_v}(0)$ on
$\,\R^n$ if $p>3$.
\vspace{1mm}
\end{itemize}
In addition,
$[\![\pi]\!]\in W^{1/2-1/2p}_p(J;L_p(\R^{n}))\cap L_p(J;W^{1-1/p}_p(\R^{n}))$
if and only if
$$ g_w\in W^{1/2-1/2p}_p(J;L_p(\R^{n}))\cap L_p(J;W^{1-1/p}_p(\R^{n})).$$
The solution map $[(f,f_d,g_v,g_w,g_h, u_0,h_0)\mapsto (u,\pi)]$ is continuous between the corresponding spaces.
\end{theorem}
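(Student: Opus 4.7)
My overall approach is the standard two-step strategy: first establish necessity of (a)--(f) directly from the structure of the equations and trace theory, then for sufficiency reduce the problem step by step to a core problem with only the interface data $(g_v,g_w)$ nonzero, which I then solve by Fourier--Laplace transform in $(t,x)$ combined with operator-valued multiplier theorems.

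\textbf{Necessity.} Condition (a) is immediate from the first equation of \eqref{linStokes}, since $\rho\partial_t u$, $\mu\Delta u$, and $\nabla\pi$ all lie in $L_p(J;L_p)$. For (b) I write $f_d=\mathrm{div}\,u$: the spatial part has the regularity $L_p(J;H^1_p(\dot{\R}^{n+1}))$, while $\partial_t f_d=\mathrm{div}\,\partial_t u\in L_p(J;\dot H^{-1}_p(\R^{n+1}))$, using that $\mathrm{div}\in\Li(L_p(\R^{n+1}),\dot H^{-1}_p(\R^{n+1}))$. For (c), the mixed regularity $u\in H^1_pL_p\cap L_pH^2_p$ produces traces of $\partial_y v$ and $\nabla_x w$ at $y=0^\pm$ in $W^{1/2-1/2p}_pL_p\cap L_pW^{1-1/p}_p$ by standard anisotropic trace theorems, so $g_v$ has the claimed regularity; the weaker statement for $g_w$ accommodates the purely spatial $\dot W^{1-1/p}_p$ regularity of $[\![\pi]\!]$. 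Condition (d) is the temporal trace of $u$, and (e), (f) arise by evaluating the divergence equation, the interface continuity, and the shear stress jump at $t=0$, which are meaningful precisely when $p>3/2$ respectively $p>3$.

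\textbf{Sufficiency.} I would reduce to the core problem in three successive steps. First, to remove $u_0$, I solve the two-phase parabolic problem for $\tilde u$ with $\tilde u(0)=u_0$ and suitable interface conditions consistent with (e), (f), by invoking standard maximal regularity of the two-phase heat operator on $\dot{\R}^{n+1}$; subtracting $\tilde u$ yields new data with vanishing time traces. Second, to absorb $f_d$, I construct $(\bar u,\bar\pi)$ in the right regularity class with $\mathrm{div}\,\bar u=f_d$, typically by putting $\bar u=\nabla\phi$ where $\Delta\phi=f_d$, so that (b) together with the isomorphism $\Delta\in\mathrm{Isom}(\dot H^1_p,\dot H^{-1}_p)$ controls both the temporal and spatial regularity of $\bar u$. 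Third, to remove $f$, I apply the one-phase Stokes maximal regularity on each half-space $\R^{n+1}_\pm$ separately and correct the small mismatch at the interface. After these three reductions the only nontrivial data are $g_v$ and $g_w$.

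\textbf{Core problem and main obstacle.} For the remaining problem I apply the Laplace transform in $t$ (with covariable $\lambda$) and the Fourier transform in $x$ (with covariable $\xi$), which reduces \eqref{linStokes} to a two-point transmission problem for an ODE system in $y$ on $\R_-\cup\R_+$. Eliminating $\pi$ via the divergence constraint and the momentum equations, I obtain an explicit matrix symbol $M(\lambda,|\xi|)$ encoding the two interface conditions; I then invert $M$ and read off the solution formulas. The hard part, and the heart of the argument, is to verify that the resulting family of solution operators is jointly $\cR$-bounded and admits a bounded $H^\infty$-calculus, after which the Kalton--Weis operator-valued Fourier multiplier theorem together with the Dore--Venni theorem delivers the claimed $L_p(J;L_p)$ estimates for $(u,\pi)$. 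Finally, the last equivalence in the theorem is immediate from the interface condition $[\![\pi]\!]=g_w+2[\![\mu\partial_y w]\!]$: once $w$ is controlled, $[\![\mu\partial_y w]\!]$ lies automatically in $W^{1/2-1/2p}_pL_p\cap L_pW^{1-1/p}_p$ by the anisotropic trace theorem, so the improved regularity of $[\![\pi]\!]$ and of $g_w$ are equivalent.
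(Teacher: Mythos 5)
Your skeleton coincides with the paper's: necessity from trace theory, sufficiency by successive reductions (initial value via a two-phase parabolic solve, divergence via an auxiliary field, bulk force via half-space solves), and finally a core problem with only the interface data $(g_v,g_w)$, treated by Laplace--Fourier transform and operator-sum/functional-calculus methods. The problem is that the entire analytic content of the theorem is concentrated in that last step, and your proposal does not carry it out: it is precisely Proposition~\ref{pro:3.3}, Theorem~\ref{th:4.1} and Corollary~\ref{co:4.2} of the paper, i.e.\ the explicit solution formulas \eqref{v_2}--\eqref{a_j-alpha_j}, the computation and inversion of the Dirichlet-to-Neumann symbol \eqref{DN}, and its realization as functions of the sectorial operators $G=\partial_t$, $D_n^{1/2}$, $F_j$, $T_j$ and $N=(\rho_1+\rho_2)GD_n^{-1/2}+2T$ with joint $H^\infty$-calculus (Dore--Venni, Kalton--Weis), followed by real interpolation to reach the mixed Sobolev--Slobodeckij classes. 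Saying ``verify that the solution operators are $\cR$-bounded and apply Kalton--Weis'' names the toolbox but proves nothing, and it even misstates the target: the required bounds are not $L_p(J;L_p)$ estimates but estimates in anisotropic, partly \emph{homogeneous} trace spaces ($g_w$ only in $L_p(J;\dot W^{1-1/p}_p(\R^n))$ with no time regularity, $\pi$ only in $L_p(J;\dot H^1_p)$), on a finite interval with vanishing time traces, where a genuine Fourier multiplier theorem in $t$ is not available; this is why the paper substitutes $G$ and $D_n^{1/2}$ into the symbols instead of transforming back.

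A second concrete gap is in your reduction scheme. Your phrase ``correct the small mismatch at the interface'' hides the key obstruction: by Proposition~\ref{pro:3.3}(b), a velocity jump can be removed by a Dirichlet solve in a half-space only if its \emph{normal} component has the extra regularity ${_0H}^1_p(J;\dot W^{-1/p}_p(\R^n))$, which is strictly more than what trace theory gives for a generic solution ($W^{1-1/2p}_p(J;L_p)\cap L_p(J;W^{2-1/p}_p)$ does not embed there). The paper's Step 2 is engineered exactly to avoid this: the divergence datum is extended \emph{evenly} in $y$ and a whole-space Stokes problem is solved, so that by symmetry the normal velocity vanishes at $y=0$ and only a tangential jump remains, which Step 3 removes with $w_b=0$. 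Your potential-field choice $\bar u=\nabla\phi$, $\Delta\phi=f_d$, does keep $[\![\bar u]\!]=0$ (and can be made to work, with some care about $\phi\in H^3_p$ on each half-space and about the initial value it reintroduces), but your step for removing $f$ must be specified so as not to create a normal-velocity jump (e.g.\ half-space solves with homogeneous Dirichlet conditions); as written, the plan would founder exactly at the point the paper's construction is designed to circumvent. Finally, your closing argument for the equivalence concerning $[\![\pi]\!]$ and $g_w$ agrees with the paper and is correct.
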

\begin{proof}
The basic idea of the proof is to reduce system~\eqref{linStokes}
to the case where $(f,f_d,u_0)=(0,0,0)$ and $g_v(0)=0$,
and then to solve the resulting
problem by means of the {\em Dirichlet-to-Neumann operator} for the Stokes problem.
We can achieve this goal in four steps, as follows.
\smallskip\\
{\bf Step 1.}\, For given data $(f,g_v,u_0)$
subject to the conditions of the theorem we first solve the parabolic problem
without pressure and divergence, i.e. we solve
\begin{equation}
\label{parabolic}
\left\{
\begin{aligned}
\rho\partial_tu -\mu\Delta u&=f
    &\ \hbox{in}\quad &\dot\R^{n+1}\\
-[\![\mu\partial_y v]\!] -[\![\mu\nabla_{x}w]\!]&=g_v
    &\ \hbox{on}\quad &\R^n\\
-2[\![\mu\partial_y w]\!] &=\tilde g_w
    &\ \hbox{on}\quad &\R^n\\
[\![u]\!] &=0 &\ \hbox{on}\quad &\R^n\\
u(0)&=u_0 &\ \hbox{in}\quad &\dot\R^{n+1}.
\end{aligned}
\right.
\end{equation}
Here we set $\tilde{g}_w=-2e^{-D_nt}[\![\mu\partial_yw_0]\!]$ with $D_n:=-\Delta$ in $L_p(\R^n)$. The function~$\tilde g_w$ has the same regularity as $g_v$, and
the necessary compatibility conditions are satisfied.
By reflection of the $\{y<0\}$-part of this problem to the upper halfplane, we obtain a parabolic system on a halfspace with
boundary conditions satisfying the Lopatinskii-Shapiro conditions.
Therefore, the theory of parabolic boundary value problems yields
a unique solution $u_1$ for \eqref{parabolic}
with regularity 
$$u_1\in H^1_p(J;L_p(\R^{n+1},\R^{n+1}))
\cap L_p(J;H^2_p(\dot{\R}^{n+1},\R^{n+1})).$$
We refer to
Denk, Hieber and Pr\"uss \cite{DHP03,DHP07} for this.
\smallskip\\
{\bf Step 2.}
In this step we solve the  Stokes equations
\begin{equation}
\label{Navier-Stokes}
\left\{
\begin{aligned}
\rho\partial_tu -\mu\Delta u+\nabla \pi&=0
    &\ \hbox{in}\quad &\dot\R^{n+1}\\
{\rm div}\,u&= f_d-{\rm div}\,u_1&\ \hbox{in}\quad &\dot\R^{n+1}\\
u(0)&=0 &\ \hbox{in}\quad &\dot\R^{n+1}\\
\end{aligned}
\right.
\end{equation}
where $u_1$ is the solution obtained in Step 1.
It follows from assumption (e) that system \eqref{Navier-Stokes}
satisfies the  compatibility condition
${\rm div}\,u(0)=f_d(0)-{\rm div}\,u_1(0)=0$.
We remind that $\rho=\rho_2\chi_{\R^{n+1}_+}+\rho_1\chi_{\R^{n+1}_-}$
and $\mu=\mu_2\chi_{\R^{n+1}_+}+\mu_1\chi_{\R^{n+1}_-}$.
Concentrating on the upper halfplane,
we extend the function $(f_d-{\rm div}\,u_1)$ evenly in $y$ to all of $\R^{n+1}$ and
solve the Stokes problem with coefficients $\rho_2,\mu_2$ in the whole space,
see \cite[Theorem 5.1]{BP07}.
This gives a solution which has the property that the normal
velocity $w$ vanishes at the interface; the latter is due to the symmetries of the equations.
We restrict this solution to $\R^{n+1}_+$.
We then do the same on the lower halfplane.
This results in a solution $(u_2,\pi_2)$ for system
\eqref{Navier-Stokes} that satisfies
\begin{equation*}
\begin{split} 
&u_2\in H^1_p(J;L_p(\R^{n+1},\R^{n+1}))
\cap L_p(J;H^2_p(\dot{\R}^{n+1},\R^{n+1})), \\
&\pi_2\in L_p(J;\dot{H}^1_p(\dot{\R}^{n+1})),
\quad w_2=0 \text{ on $\R^n$},
\end{split}
\end{equation*}
where, as before, $u_2=(v_2,w_2)$.
We remark that the tangential part of the velocity,
i.e. $v_2$, may now have a jump at the boundary $y=0$.
\smallskip\\
{\bf Step 3.}
To remove the jump in the tangential velocity,
we solve the homogeneous Stokes problem in the lower halfplane
with this jump as Dirichlet datum, that is, we solve
\begin{equation}
\label{tangential-jump}
\left\{
\begin{aligned}
\rho_1\partial_tu -\mu_1\Delta u+\nabla \pi&=0
    &\ \hbox{in}\quad &\dot\R^{n+1}_{-}\\
{\rm div}\,u&= 0 &\ \hbox{in}\quad &\dot\R^{n+1}_{-}\\
v=[\![v_2]\!],\ w&=0   &\ \hbox{on}\quad &\R^{n} \\
u_0(0)&=0 &\ \hbox{in}\quad &\dot\R^{n+1}_{-}\\
\end{aligned}
\right.
\end{equation}
where $u_2=(v_2,w_2)$ is the solution obtained in Step 2.
It follows from Proposition~\ref{pro:3.3} below that system \eqref{tangential-jump}
has a unique solution with the regularity properties
of Theorem~\ref{th:3.1}.
Let $(u_3,\pi_3)$ be defined by
\begin{equation*}
(u_3,\pi_3):=
\left\{
\begin{aligned}
&\text{(0,0)} &\ \hbox{in} \quad &\dot\R^{n+1}_{+} \\
&\text{the solution of \eqref{tangential-jump}}
&\ \hbox{in} \quad &\dot\R^{n+1}_{-}. \\
\end{aligned}
\right.
\end{equation*}
Then $(u_3,\pi_3)$ also satisfies the regularity properties
stated in Theorem~\ref{th:3.1}
and we have
$[\![v_3]\!]=-[\![v_2]\!]$ and $[\![w_3]\!]=0$ on $\R^n$.
\smallskip\\
{\bf Step 4.}
In this step we consider the problem
\begin{equation}
\label{g}
\left\{
\begin{aligned}
\rho\partial_tu -\mu\Delta u+\nabla \pi&=0
    &\hbox{in}\ &\dot\R^{n+1}\\
{\rm div}\,u&= 0 &\hbox{in}\ &\dot\R^{n+1}\\
-[\![\mu\partial_y v]\!] -[\![\mu\nabla_x w]\!]
&=[\![\mu\partial_y(v_2+v_3)]\!]+[\![\mu\nabla_x(w_2+w_3)]\!]
    &\hbox{on}\ &\R^n\\
-2[\![\mu\partial_y w]\!] +[\![\pi]\!] &
=g_w-\tilde g_w+ 2[\![\mu\partial_y(w_2+w_3)]\!]-[\![\pi_2+\pi_3]\!]
    &\hbox{on}\ &\R^n\\
[\![u]\!] &=0 &\hbox{on}\ &\R^n\\
u(0)&=0 &\ \hbox{in}\ &\dot\R^{n+1}\\
\end{aligned}
\right.
\end{equation}
with $(v_2,w_2,\pi_2)$ and $(v_3,w_3,\pi_3)$ the solutions
obtained in Steps 2 and 3.
Here it should be observed that
the function on the right hand side of line 3 appearing
as boundary condition has zero time trace.
Problem \eqref{g}, which is also of independent interest,
will be studied in detail in the next section.
It will be shown in  Corollary~\ref{co:4.2} that it admits
a unique solution, denoted here by $(u_4,\pi_4)$, which satisfies the regularity properties
stated in Theorem~\ref{th:3.1}.\smallskip\\
To finish the proof of Theorem~\ref{th:3.1} we set
$(u,\pi)=(\sum_{i=1}^4u_i,\sum_{i=1}^4\pi_i)$,
where  $(u_i,\pi_i)$ are the solutions obtained
in Step i, with $\pi_1:=0$.
Then $(u,\pi)$ satisfies the regularity properties
stated in the Theorem and it is the unique solution of
\eqref{linStokes}.
\end{proof}
\begin{remark}
We refer to the recent paper by Bothe and Pr\"uss \cite{BP07}
for results related to Theorem 3.1 for the more general and involved
situation of a generalized Newtonian fluid.
\end{remark}
Let us now consider the problem
\begin{equation}
\label{SPD}
\left\{
\begin{aligned}
\rho\partial_tu -\mu\Delta u+\nabla \pi&= 0
    &\ \hbox{in}\quad &\dot\R^{n+1}\\
{\rm div}\,u&=0&\ \hbox{in}\quad &\dot\R^{n+1}\\
 u &=u_b &\ \hbox{on}\quad &\R^n\\
u(0)&=0 &\ \hbox{in}\quad &\dot\R^{n+1}\\
\end{aligned}
\right.
\end{equation}
and prove the result that was used in Step 3 above.
\begin{proposition}
\label{pro:3.3}
Let $1<p<\infty$ and assume that $\rho_j$ and $\mu_j$ are positive constants, $j=1,2$, and set $J=[0,a]$.
Then problem \eqref{SPD} admits a unique solution $(u,\pi)$ with
$$
u\in {_0}H^1_p(J;L_p(\R^{n+1},\R^{n+1}))
\cap L_p(J;H^2_p(\dot{\R}^{n+1},\R^{n+1})),
\quad \pi\in L_p(J;\dot{H}^1_p(\dot{\R}^{n+1}))
$$
if and only if the data $u_b=(v_{b},w_{b})$ satisfy the following regularity
assumptions
\begin{itemize}
\item[(a)] $ v_{b}\in {_0W}^{1-1/2p}_p(J;L_p(\R^{n},\R^n))
 \cap L_p(J;W^{2-1/p}_p(\R^{n},\R^n))$,
\vspace{1mm}
\item[(b)]
$w_{b} \in {_0H}^1(J;\dot{W}^{-1/p}_p(\R^n))\cap L_p(J;W^{2-1/p}_p(\R^n))$.\\
\end{itemize}
\end{proposition}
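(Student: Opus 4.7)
The plan is to reduce \eqref{SPD} to two decoupled one-phase Stokes problems on $\R^{n+1}_\pm$. Since no transmission conditions link the upper and lower halves once $u_b$ is prescribed on $\R^n$, the problem splits into two independent Dirichlet problems, and it suffices to treat the one on $\R^{n+1}_+$ with coefficients $(\rho_2,\mu_2)$; the lower half-space case is completely analogous.

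Necessity of (a) and (b) would follow from trace theory. The standard anisotropic parabolic trace embedding
\begin{equation*}
{}_0 H^1_p(J; L_p(\R^{n+1}_+)) \cap L_p(J; H^2_p(\R^{n+1}_+))
\hookrightarrow {}_0 W^{1-1/2p}_p(J; L_p(\R^n)) \cap L_p(J; W^{2-1/p}_p(\R^n))
\end{equation*}
yields (a) for $v_b$ and the spatial part of (b) for $w_b$. The improved time regularity of $w_b$ in (b) reflects the divergence-free constraint: taking Laplace-Fourier transforms of ${\rm div}\,u = 0$ and using the decay of $\hat w$ as $y\to+\infty$ gives $\hat w_b(\lambda,\xi) = i\xi\cdot\int_0^\infty \hat v(\lambda,\xi,y)\,dy$, so $w_b$ is a tangential divergence of a function controlled in $H^1_p(J; L_p(\R^n))$, and after passing to the homogeneous Besov scale one obtains $\partial_t w_b \in L_p(J; \dot W^{-1/p}_p(\R^n))$.

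For sufficiency I would proceed in two stages. First, extend $v_b$ to $V \in {}_0 H^1_p(J; L_p(\R^{n+1}_+, \R^n)) \cap L_p(J; H^2_p(\R^{n+1}_+, \R^n))$ with $V|_{y=0} = v_b$ via maximal regularity for the Dirichlet heat problem on the half-space, and define $\tilde w(x,y,t) := -\int_0^y {\rm div}_x V(x, y', t)\,dy'$ so that $(V,\tilde w)$ is divergence-free. Subtracting $(V,\tilde w)$ from $u$ reduces \eqref{SPD} to the case $v_b = 0$ with a modified scalar datum $w_b^\ast$ lying in the same class as $w_b$. For this reduced problem, I apply the Laplace-Fourier transform in $(t,x)$; setting $\omega(\lambda,\xi) := \sqrt{|\xi|^2 + \lambda\rho/\mu}$, the decaying fundamental modes are $e^{-\omega y}$ and $e^{-|\xi| y}$, and imposing the boundary conditions produces explicit formulas for $\hat u,\hat\pi$ in terms of $\hat w_b^\ast$, with pressure of the form $\hat\pi(y) = \mu(\omega+|\xi|)\omega\,\hat w_b^\ast\, e^{-|\xi| y}/|\xi|$. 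Maximal $L_p$-regularity then follows from operator-valued Fourier multiplier bounds on the resulting symbols, obtained via the Kalton-Weis theorem combined with the joint $H^\infty$-calculus of $\partial_t$ and $(-\Delta_x)^{1/2}$, in the spirit of the analysis employed throughout the later sections of the paper.

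The main obstacle will be matching the asymmetric regularity of $w_b$ --- measured in $H^1_p(J; \dot W^{-1/p}_p)$ rather than the ordinary trace space $W^{1-1/2p}_p(J; L_p)$ --- with the factor $\omega/|\xi|$ appearing in the pressure symbol, which behaves like $\max(1,\lambda^{1/2}/|\xi|)$. Demonstrating the required $\mathcal{R}$-boundedness of the corresponding multiplier families on the precise homogeneous scale dictated by (b), and simultaneously verifying that the divergence-preserving extension of $v_b$ really produces a corrected normal datum in the same class as $w_b$, is the heart of the matter.
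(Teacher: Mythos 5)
Your overall strategy---decouple into two half-space Dirichlet problems, solve explicitly in Laplace--Fourier variables, and locate the extra condition on $w_b$ in the pressure symbol---is in spirit the same as the paper's, and your pressure symbol $\mu\,\omega(\omega+|\xi|)\hat w_b^\ast e^{-|\xi|y}/|\xi|$ agrees with the paper's $\beta_2$ when $v_b=0$. But there are genuine gaps. First, the reduction to $v_b=0$ via the divergence-preserving extension does not work in these classes: $\tilde w(t,x,y)=-\int_0^y{\rm div}_x V(t,x,y')\,dy'$ need not lie in $L_p(J;L_p(\R^{n+1}_+))$ at all (the antiderivative in $y$ over an unbounded interval is not bounded on $L_p$), and $\partial_t\tilde w=-\int_0^y{\rm div}_x\partial_t V\,dy'$ would require $\nabla_x\partial_t V\in L_p$, which heat-equation maximal regularity does not provide. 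Note moreover that $\tilde w|_{y=0}=0$, so the normal datum is not modified at all, while subtracting $(V,\tilde w)$ creates an interior forcing $(\rho\partial_t-\mu\Delta)(V,\tilde w)$ that is not in $L_p(J;L_p)$; hence the ``reduced problem with $w_b^\ast$ in the same class as $w_b$'' is unfounded. The detour is also unnecessary: the transformed ODE system can be solved with the full data $(v_b,w_b)$, as in \eqref{v_2}--\eqref{a_j-alpha_j}, and the coupling need only be disentangled at the level of the pressure symbol.

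Second, the actual content of the proposition---that, given the standard parabolic trace class for $u_b$, the condition ``$\nabla\pi\in L_p$'' is equivalent to the extra time regularity in (b)---is exactly what you defer as ``the heart of the matter,'' so neither sufficiency nor the necessity of (b) is established. The paper settles this without any operator-valued multiplier theorem: since the pressure transform is $e^{-|\xi|y}\beta_2$ and $e^{-|\xi|y}$ is the symbol of the Poisson semigroup generated by $D_n^{1/2}$, the second norm in \eqref{norms-homogeneous} converts ``$\nabla\pi\in L_p$'' into ``$\beta_2\in L_p(J;\dot W^{1-1/p}_p(\R^n))$''; writing $\beta_2=\rho_2\frac{\lambda}{|\xi|}\hat w_b+(\sqrt{\mu_2}\omega_2+\mu_2|\xi|)(\hat w_b-(i\zeta|\hat v_b))$, the second summand lies in $D_{F_2}(1-1/p,p)\hookrightarrow L_p(J;\dot W^{1-1/p}_p(\R^n))$ automatically from the trace class, while the first equals $\rho_2 G\dot D_n^{-1/2}w_b$ and $G\dot D_n^{-1/2}$ is an isomorphism of ${_0H}^1_p(J;\dot W^{-1/p}_p(\R^n))$ onto $L_p(J;\dot W^{1-1/p}_p(\R^n))$---this is precisely where (b) enters as a necessary and sufficient condition. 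Finally, your justification of necessity, $\hat w_b=i\xi\cdot\int_0^\infty\hat v\,dy$ with the integral ``controlled in $H^1_p(J;L_p(\R^n))$,'' is not correct as stated, since the $y$-integral of an $L_p(\R^{n+1}_+)$ function is not controlled in $L_p(\R^n)$; a correct substitute is the weak normal-trace estimate for the divergence-free field $\partial_t u(t)\in L_p(\R^{n+1}_+)$ tested against $\dot H^1_{p'}$ extensions of functions in $\dot W^{1/p}_{p'}(\R^n)$, or simply the paper's symbol computation.
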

\begin{proof}
(i) Assume for a moment that we have a solution in the proper regularity class
even on the half-line $J=\R_+$.
Then we may employ the Laplace transform in $t$ and the Fourier transform
in the
tangential variables $x\in\R^{n}$, to obtain the following boundary value problem
for a system of ordinary differential equations on~$\dot{\R}:$
\begin{equation*}
\label{ODE}
\left\{
\begin{aligned}
\omega^2\hat v-\mu\partial_y^2\hat v +i\xi\hat \pi&=0, &\quad y\neq 0, \\
\omega^2\hat w-\mu\partial_y^2\hat w +\partial_y\hat \pi&=0, &\quad y\neq 0, \\
    (i\xi|\hat v)+\partial_y\hat w &=0,&\quad y\neq 0,\\
\hat v(0)=\hat v_b,\ \hat w(0)&=\hat w_b. \\
\end{aligned}
\right.
\end{equation*}
Here we have set $\omega^2_j=\rho_j\lambda+\mu_j|\xi|^2$, $j=1,2$,
and
$$\hat v_j(\lambda,\xi,y)=(2\pi)^{-n/2}\int_0^\infty\int_{\R^n}
e^{-\lambda t}e^{-i(x|\xi)}v(t,x,y)\,dx\,dt,
\quad (-1)^jy>0.
$$
This system of equations is easily solved to the result
\begin{equation}
\label{v_2}
\left[\begin{array}{c}\hat v_2\\\hat w_2\\ \hat \pi_2\end{array}\right]
= e^{-\omega_2y/\sqrt{\mu_2}}\left[\begin{array}{c} a_2\\
\frac{\sqrt{\mu_2}}{\omega_2}(i\xi|a_2)\\ 0\end{array} \right] + \alpha_2 e^{-|\xi|y} \left[\begin{array}{c} -i\xi\\ |\xi|\\ \rho_2\lambda
\end{array}\right],
\end{equation}
for $y>0$, and
\begin{equation}
\label{v_1}
\left[\begin{array}{c}\hat v_1\\ \hat w_1\\ \hat \pi_1\end{array}\right]= e^{\omega_1y/\sqrt{\mu_1}}\left[\begin{array}{c} a_1\\
-\frac{\sqrt{\mu_1}}{\omega_1}(i\xi|a_1)\\ 0\end{array} \right] + \alpha_1 e^{|\xi|y} \left[\begin{array}{c} -i\xi\\ -|\xi|\\ \rho_1\lambda
\end{array}\right],
\end{equation}
for $y<0$.
Here $a_i\in\R^{n}$ and $\alpha_i$ have to be determined by the boundary conditions
$\hat v(0)=\hat v_{b}$ and $\hat w(0)=\hat w_{b}$. We have
$$ a_2 -i\xi\alpha_2 = \hat v_{b}= a_1-i\xi\alpha_1,$$
and
$$
\frac{\sqrt{\mu_2}}{\omega_2}(i\xi|a_2)+|\xi|\alpha_2
=\hat w_{b} = -\frac{\sqrt{\mu_1}}{\omega_1}(i\xi|a_1)-|\xi|\alpha_1
$$
where $(a|b):=\sum a^jb^j$ for $a,b\in \C^n$.
This yields
\begin{equation}
\label{a_j-alpha_j}
\begin{split}
&a_j=\hat v_{b}+ i\xi\alpha_j ,\quad j=1,2, \\
&\alpha_2 = -\frac{\omega_2+\sqrt{\mu_2}|\xi|}{\rho_2\lambda|\xi|}
(\sqrt{\mu_2}(i\xi|\hat v_{b})-\omega_2 \hat w_{b}),\\
&\alpha_1 = -\frac{\omega_1+\sqrt{\mu_1}|\xi|}{\rho_1\lambda|\xi|}
(\sqrt{\mu_1}(i\xi|\hat v_{b})+\omega_1 \hat w_{b}).
\end{split}
\end{equation}
\smallskip\\
(ii)
By parabolic theory, the velocity $u$ has the correct regularity
provided the pressure gradient is in $L_p$, and provided
$$
u_b\in {_0W}^{1-1/2p}_p(J;L_p(\R^n,\R^{n+1}))
\cap L_p(J;W^{2-1/p}_p(\R^n,\R^{n+1})),
$$
see for instance Denk, Hieber and Pr\"uss \cite{DHP07}.
In particular this regularity of $u_b$ is necessary. Note that the embedding
\begin{equation}
\label{embedding-negative}
{_0H}^1_p(J;\dot{W}^{-1/p}_p(\R^n))\cap L_p(J;W^{2-1/p}_p(\R^n))
\hookrightarrow {_0W}^{1-1/2p}_p(J;L_p(\R^n))
\end{equation}
is valid. 
This follows from the fact
that $\dot W^{-1/p}_p(\R^n)\hookrightarrow W^{-1/p}_p(\R^n)$
by a similar argument as in the proof of \cite[Lemma 6.3]{PSS07}
where we set $Au:=(1-\Delta)u$.
\smallskip\\
(iii)
We will now introduce some operators that will
play a crucial role in our analysis.
We set $G:=\partial_t$ in $X:=L_p(J;L_p(\R^n))$ with domain
$$
{\sf D}(G)={_0H}^1_p(J;L_p(\R^n)).$$
Then it is well-known
that $G$ is closed, invertible and sectorial with angle $\pi/2$, and $-G$ is the generator
of a $C_0$-semigroup of contractions in $L_p(\R^n)$. Moreover, $G$ admits an
$H^\infty$-calculus in $X$ with $H^\infty$-angle $\pi/2$ as well; see e.g.\
\cite{HiPr98}. The symbol of $G$ is $\lambda$, the time covariable.

Next we set $D_n:=-\Delta$, the Laplacian in $L_p(\R^n)$ with domain
${\sf D}(D_n)=H^2_p(\R^n)$. It is also well-known that $D_n$ is closed and  sectorial with
angle $0$, and it admits a bounded $H^\infty$-calculus which is even $\cR$-bounded with
$\cR H^\infty$-angle $0$; see e.g.\  \cite{DHP01a}.
 These results also hold for the canonical extension of
$D_n$ to $X$, and also for the fractional power $D_n^{1/2}$ of $D_n$.
Note that the domain of $D^{1/2}_n$ is
$${\sf D}(D_n^{1/2})=L_p(J;H^1_p(\R^n)).
$$
The symbol of
$D_n$ is $|\xi|^2$, that of $D_n^{1/2}$ is given by $|\xi|$, where $\xi$ means the covariable
of $x$.
By the Dore-Venni theorem for sums of commuting sectorial operators, cf. \cite{DoVe87,PrSo90}, we see that the parabolic operators $L_j:=\rho_j G+\mu_j D_n$ with
natural domain
$${\sf D}(L_j)={\sf D}(G)\cap{\sf D}(D_n)={_0H}^{1}_p(J;L_p(\R^n))\cap L_p(J;H^2_p(\R^n))$$
are closed, invertible and  sectorial with angle $\pi/2$.
Moreover, $L_j$ also admits a bounded
$H^\infty$-calculus in $X$ with $H^\infty$-angle $\pi/2$; cf. e.g.\
\cite{PrSi06}. The same results are valid for
the operators $F_j=L_j^{1/2}$, their $H^\infty$-angle is $\pi/4$, and their domains are
$$
{\sf D}(F_j)= {\sf D}(G^{1/2})\cap {\sf D}(D_n^{1/2})={_0H}^{1/2}_p(J;L_p(\R^n))\cap
L_p(J;H^1_p(\R^n)).
$$
The symbol of $L_j$ is $\rho_j\lambda+\mu_j|\xi|^2$ and that of $F_j$
is given by $\sqrt{\rho_j\lambda+\mu_j|\xi|^2}$.
\smallskip\\
Let $R$ denote the Riesz operator with symbol $\zeta=\xi/|\xi|$.
It follows from the Mikhlin-H\"ormander theorem that
$R$ is a bounded linear operator on $W^s_p(\R^n)$, and hence also
on $L_p(J;W^s_p(\R^n)$ by canonical extension.

(iv)
Let $\beta_2=\rho_2\lambda\alpha_2$.
Then the transform of the pressure $\pi_2$ in $\R^{n+1}_{+}$ is given by
$e^{-|\xi|y}\beta_2$.
The pressure gradient will be in $L_p$ provided the inverse transform of $\beta_2$
is in the space $L_p(J;\dot{W}_p^{1-1/p}(\R^n))$.
In fact, $e^{-|\xi|y}$ is the symbol of
the Poisson semigroup $P(\cdot)$ in $L_p(\R^n)$, and
the negative generator of $P(\cdot)$ is $D^{1/2}_n$.
Then  the second part of ~\eqref{norms-homogeneous} shows that
$D^{1/2}_nP(\cdot)\beta_2\in L_p(\R_+;L_p(\R^n))$ if and only
$\beta_2\in \dot{W}_p^{1-1/p}(\R^n)$.
This result extends canonically to $L_p(J;L_p(\R^{n+1}_+))$.
\medskip\\
Therefore, let us look more closely at $\beta_2$. We easily obtain
$$
\beta_2= \rho_2\frac{\lambda}{|\xi|}\hat w_{b}
+(\sqrt{\mu}_2\omega_2+\mu_2|\xi|)(\hat w_{b}-(i\zeta|\hat v_{b})),
$$
where $\zeta=\xi/|\xi|$.
We recall that $\dot{D}_n^{1/2}:=\mathcal F^{-1}(|\xi|\mathcal F\,\cdot):
\dot{W}^s_p(\R^n)\to\dot{W}^{s-1}_p(\R^n)$ is an isomorphism.

With the operators introduced above, $b_2$, the inverse transform
of $\beta_2$, can be represented by
$$b_2 = \rho_2 G\dot{D}^{-1/2}_nw_{b} + (\sqrt{\mu_2}F_2+\mu_2D^{1/2}_n)(w_{b}-i(R|v_{b}))
=:b_{21}+b_{22}.$$
Due to \eqref{embedding-negative} and
$
{_0W}^{1-1/2p}_p(J;L_p(\R^{n}))
\cap L_p(J;W^{2-1/p}_p(\R^{n}))=D_{F_j}(2-1/p,p),$
the second term $b_{22}$ is in
$$D_{F_j}(1-1/p,p)={_0W}^{1/2-1/2p}_p(J;L_p(\R^{n}))
\cap L_p(J;W^{1-1/p}_p(\R^{n})),$$
which embeds into $L_p(J;\dot{W}^{1-1/p}_p(\R^n)).$
Here we used the notation
$$
D_{F_j}(\theta,p)=(X,{\sf D}(F_j))_{\theta,p},
\quad D_{F_j}(1+\theta,p)=({\sf D}(F_j),{\sf D}(F^2_j))_{\theta,p},
\quad \theta\in (0,1).
$$
Thus it remains to look at the first term $b_{21}=\rho_2GD^{-1/2}_n w_b$.
Since
$$G\dot{D}_n^{-1/2}: {_0H}^1_p(J;\dot{W}^{-1/p}_p(\R^n))
\rightarrow L_p(J;\dot{W}_p^{1-1/p}(\R^n))$$
is bounded and invertible, we see that the condition
$w_b\in{_0H}^1_p(J;\dot{W}^{-1/p}_p(\R^n))$ is necessary and sufficient
for $b_{21}\in L_p(J;\dot{W}^{1-1/p}_p(\R^n))$.
Of course, similar arguments apply for the lower half-plane.
\end{proof}
\section{The Dirichlet-to-Neumann Operator for the Stokes Equation}
The main ingredient in analyzing problem \eqref{linStokes}
with $(f,f_d,u_0)=(0,0,0)$ and $g_v(0)=0$
is the Dirichlet-to-Neumann operator.
It is defined as follows.
Let $(u,\pi)$ be the solution of the Stokes problem \eqref{SPD}
with Dirichlet boundary condition $u_b$ on $\R^n$,
see Proposition~\ref{pro:3.3}.
We then define the Dirichlet-to-Neumann operator
by means of
\begin{equation}
\label{def-ND}
(\cDN) u_b=-[\![S(u,\pi)]\!]e_{n+1}
= -[\![\mu\big(\nabla u+(\nabla u)^{\sf T}\big)]\!]e_{n+1} +[\![\pi]\!]e_{n+1}.
\end{equation}
For this purpose it is convenient to split $u$ into $u=(v,w)$ as before,
and $u_b$ into $u_b=(v_{b},w_{b})$. Then we obtain
\begin{equation}
\label{def-ND-2}
(\cDN) u_b= (-[\![\mu\partial_y v]\!]-[\![\mu\nabla_x w]\!],
-2[\![\mu\partial_yw]\!]+ [\![\pi]\!]) .
\end{equation}
We will now formulate and prove the main result of this section.
\begin{theorem}
\label{th:4.1}
The Dirichlet-to-Neumann operator $\cDN$ for the Stokes problem
is an isomorphism from the {\em Dirichlet space} $u_b=(v_b,w_b)$ with
\begin{equation*}
\begin{aligned}
& v_b\in {_0W}^{1-1/2p}_p(J;L_p(\R^n,\R^n))\cap L_p(J;W^{2-1/p}_p(\R^n,\R^n)),\\
& w_b\in {_0H}^{1}_p(J;\dot{W}^{-1/p}_p(\R^n))\cap L_p(J;W^{2-1/p}_p(\R^n))
\end{aligned}
\end{equation*} 
onto the {\em Neumann space} $g=(g_v,g_w)$ with
\begin{equation*}
\begin{aligned}
&g_v\in {_0W}^{1/2-1/2p}_p(J;L_p(\R^n,\R^n))\cap L_p(J;W^{1-1/p}_p(\R^n,\R^n)),\\
&g_w\in  L_p(J;\dot{W}^{1-1/p}_p(\R^n)).
\end{aligned}
\end{equation*}
\end{theorem}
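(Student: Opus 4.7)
The plan is to use the Fourier--Laplace representation of the Stokes solution from Proposition~\ref{pro:3.3} to obtain an explicit $2\times 2$ block symbol for $\cDN$, identify its entries as bounded holomorphic functions of the commuting sectorial operators $G,D_n^{1/2},F_1,F_2$ introduced there, and then invert the symbol so that the Kalton--Weis theorem yields boundedness of both $\cDN$ and $\cDN^{-1}$ between the stated function spaces.

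First I would dispose of boundedness of $\cDN$, which is essentially already contained in Proposition~\ref{pro:3.3}. For any Dirichlet data in the class specified, that result produces $(u,\pi)$ in $_0H^1_p(J;L_p(\R^{n+1}))\cap L_p(J;H^2_p(\dot\R^{n+1}))$ with $\pi\in L_p(J;\dot H^1_p(\dot\R^{n+1}))$. The anisotropic parabolic trace theory of \cite{DHP03,DHP07} places the tangential jump $-[\![\mu\partial_y v]\!]-[\![\mu\nabla_x w]\!]$ in $_0W^{1/2-1/2p}_p(J;L_p)\cap L_p(J;W^{1-1/p}_p)$, while the pressure-jump estimate from part~(iv) of the proof of Proposition~\ref{pro:3.3} puts $g_w$ in $L_p(J;\dot W^{1-1/p}_p)$.

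Next I would compute the symbol of $\cDN$ by substituting \eqref{v_2}, \eqref{v_1} and \eqref{a_j-alpha_j} into \eqref{def-ND-2}. This yields $\widehat{\cDN u_b}(\lambda,\xi)=M(\lambda,\xi)\hat u_b$, with diagonal leading entries $(\sqrt{\mu_1}\omega_1+\sqrt{\mu_2}\omega_2)I+(\mu_1+\mu_2)|\xi|P_\xi$ on the $v_b$-slot (where $P_\xi=\xi\otimes\xi/|\xi|^2$) and roughly $\omega_j+|\xi|$ on the $w_b$-slot, the latter obtained via the algebraic identity $\omega_j^2-\mu_j|\xi|^2=\rho_j\lambda$. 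A short calculation, splitting $\hat v_b$ into its components parallel and orthogonal to $\xi$, shows $\det M\ne 0$ on every sector $\Sigma_\theta\times(\R^n\setminus\{0\})$ with $\theta<\pi$ and gives a closed-form expression for $M^{-1}$; in particular, injectivity of $\cDN$ follows at once. The divergence constraint, which at the symbolic level forces $[\![\mu\partial_y w]\!]=-[\![\mu]\!]\,\mathrm{div}_x v_b$, is essential for the off-diagonal bookkeeping: it reveals that the $v_b\to g_w$ symbol has order $|\xi|$, which exactly matches the ``shift'' between the $w_b$-Dirichlet space (with its homogeneous negative-order piece $\dot W^{-1/p}_p$) and the $g_v$-Neumann space. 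Reading $\lambda,|\xi|,\omega_j,i\xi/|\xi|$ as the operators $G,D_n^{1/2},F_j,R$ -- all admitting a bounded $H^\infty$-calculus with angle at most $\pi/2$ -- the Kalton--Weis theorem yields boundedness on $X=L_p(J;L_p(\R^n))$, and combining with \eqref{I-s-isom} and \eqref{norms-homogeneous} transfers this to the anisotropic Sobolev--Slobodeckij spaces in the statement.

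The hard part will be the final functional-calculus identification rather than the algebraic inversion: one must verify that each scalar entry of $M$ and of $M^{-1}$ is a bounded holomorphic function of the four-tuple $(G,D_n^{1/2},F_1,F_2)$ of the correct order relative to the four anisotropic function spaces appearing in the Dirichlet--Neumann pair, so that the Kalton--Weis boundedness on $X$ translates into the desired mapping property between the homogeneous and inhomogeneous fractional Sobolev spaces. Once this bookkeeping is complete, injectivity plus pointwise invertibility plus the two-sided $H^\infty$-estimates give the claimed isomorphism.
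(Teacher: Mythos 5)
Your overall strategy --- computing the symbol of $\cDN$ from the explicit formulas \eqref{v_2}--\eqref{a_j-alpha_j} and realizing the inverse symbol through a functional calculus for the commuting operators $G$, $D_n^{1/2}$, $F_1$, $F_2$ --- is indeed the route the paper takes, and your treatment of the forward (boundedness) direction via Proposition~\ref{pro:3.3} and parabolic trace theory is unobjectionable. However, two of your preparatory claims about the symbol are wrong in ways that matter. The symbol is \eqref{DN}: its $w_b$-diagonal entry is $\alpha+\delta$ with $\delta=(\omega_1^2+\omega_2^2)/|\xi|=(\mu_1+\mu_2)|\xi|+(\rho_1+\rho_2)\lambda/|\xi|$, which is \emph{not} ``roughly $\omega_j+|\xi|$'': the term $(\rho_1+\rho_2)\lambda/|\xi|$ blows up as $|\xi|\to 0$, and it is exactly this non-quasi-homogeneous piece that forces the homogeneous space ${_0H}^1_p(J;\dot W^{-1/p}_p(\R^n))$ in the Dirichlet class for $w_b$ and leaves $g_w$ with only $L_p(J;\dot W^{1-1/p}_p(\R^n))$-regularity. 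Likewise the $v_b\to g_w$ entry is $-i\gamma\zeta^{\sf T}$ with $\gamma=\sqrt{\mu_2}\omega_2-\sqrt{\mu_1}\omega_1-[\![\mu]\!]|\xi|$ (see \eqref{alpha-beta}); this is of full parabolic order one (it contains $\sqrt{\lambda}$-behaviour whenever $\rho_1\neq\rho_2$ or $\mu_1\neq\mu_2$), not of order $|\xi|$, because the pressure jump, and not only the divergence constraint, feeds into $g_w$. So the ``shift'' you invoke to match the spaces is attributed to the wrong mechanism.

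More seriously, the step you explicitly defer --- ``verify that each entry of $M^{-1}$ is a bounded holomorphic function of $(G,D_n^{1/2},F_1,F_2)$ of the correct order and apply Kalton--Weis, then transfer from $X=L_p(J;L_p(\R^n))$ to the anisotropic spaces'' --- is precisely the content of the theorem and does not follow by such generalities: boundedness on $X$ says nothing about mapping $L_p(J;\dot W^{1-1/p}_p(\R^n))$ into ${_0H}^1_p(J;\dot W^{-1/p}_p(\R^n))\cap L_p(J;W^{2-1/p}_p(\R^n))$, and the entries of $M^{-1}$ contain the quotient $\lambda/|\xi|$, so one must also say which operator sums are actually invertible. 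The paper closes this gap by a specific algebraic and operator-theoretic construction that your proposal never identifies: after reducing to the scalar unknowns $(\zeta|\hat v_b)$ and $\hat w_b$ one obtains \eqref{v-w-1} with determinant $m=(\alpha+\beta)(\alpha+\delta)-\gamma^2$, and the key identity is the factorization $m=(\alpha+\beta)\,n$ with $n=(\rho_1+\rho_2)\lambda/|\xi|+4(\eta_1^{-1}+\eta_2^{-1})^{-1}$. This exhibits $m^{-1}$ as $(T_1+T_2)^{-1}N^{-1}$, where $N=(\rho_1+\rho_2)GD_n^{-1/2}+2T$ and $T$ is the harmonic mean of $T_1,T_2$; invertibility of $N$ comes from the Dore--Venni theorem since $GD_n^{-1/2}$ has $H^\infty$-angle $\pi/2$ and $T$ has angle $\pi/4$, and the decisive mapping properties \eqref{N-inverse-Lp} and \eqref{N-inverse-W^s} of $N^{-1}$ are obtained by real interpolation of its action on $L_p(J;L_p(\R^n))$, $L_p(J;\dot H^1_p(\R^n))$ and ${_0W}^{1/2-1/2p}_p(J;L_p(\R^n))$. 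Only with these, together with boundedness of $T_j(T_1+T_2)^{-1}$ and of the Riesz transform in $D_F(1-1/p,p)$, do the representations \eqref{rep} yield $w_b$ and $(R|v_b)$, and then $v_b$ via \eqref{DN-1-v}, in the stated Dirichlet class. Without the factorization of $m$ and the analysis of $N^{-1}$ your argument does not close, so as it stands the proposal has a genuine gap at its central step.
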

\begin{proof}
(i) Let $(\hat v_1,\hat w_1,\hat \pi_1)$ and
$(\hat v_2,\hat w_2,\hat \pi_2)$
be as in \eqref{v_2}--\eqref{v_1}.
We may now compute the symbol of the Dirichlet-to-Neumann operator to the result
$$ (\cDN) \hat u_b= \left[\begin{array}{c} \omega_1\sqrt{\mu_1}a_1+\omega_2\sqrt{\mu_2}a_2-(\alpha_1\mu_1+\alpha_2\mu_2)|\xi|i\xi
- [\![\mu]\!]i\xi\hat w_{b}\\
 2i(\mu_2 a_2-\mu_1 a_1|\xi) +2(\alpha_2\mu_2-\alpha_1\mu_1)|\xi|^2+\lambda(\alpha_2\rho_2-\alpha_1\rho_1)\end{array}\right]$$
where the functions $\alpha_j$ and $a_j$ are given in \eqref{a_j-alpha_j}.
Simple  algebraic manipulations then yield
the following symbol
\begin{equation}\label{DN}
(\cDN)(\lambda,\xi)=\left[\begin{array}{cc}
\alpha +\beta\zeta\otimes\zeta & i\gamma\zeta\\
-i\gamma\zeta^T& \alpha+\delta\end{array}\right],
\end{equation}
where $\zeta=\xi/|\xi|$ and
\begin{equation}
\label{alpha-beta}
\begin{split}
&\alpha =\sqrt{\mu_1}\omega_1+\sqrt{\mu_2}\omega_2,
\quad\beta =(\mu_1+\mu_2)|\xi|, \\
&\gamma = (\sqrt{\mu_2}\omega_2-\sqrt{\mu_1}\omega_1)-[\![\mu]\!]|\xi|,
\quad \delta = (\omega_1^2+\omega_2^2)/|\xi|=\beta +(\rho_1+\rho_2)\lambda/|\xi|.\\
\end{split}
\end{equation}
\noindent
\smallskip\\
Next we want to compute the inverse of the Dirichlet-to-Neumann operator.
Thus we have to solve the equation
$(\cDN) u_b = g$. As before we use the decomposition $u_b=(v_b,w_b)$ and $g=(g_v,g_w)$. Then in transformed variables we have the system
\begin{equation*}
\begin{split}
\alpha \hat v_b + \beta \zeta (\zeta |\hat v_b)
+i\gamma\zeta \hat w_b &=\hat g_v,\\
-i\gamma(\zeta |\hat v_b) +(\alpha+\delta)\hat w_b &=\hat g_w.
\end{split}
\end{equation*}
This yields
\begin{equation}
\label{DN-1-v}
\hat v_b= \alpha^{-1}
[\hat g_v -\zeta(\beta(\zeta |\hat v_b)+i\gamma \hat w_b)].
\end{equation}
(ii)
This last equation shows that it is sufficient
to determine $(\hat v_b|\zeta)$ and $\hat w_b$.
If the inverses of $\beta(\hat v_b|\zeta)$ and $\gamma \hat w_b$ belong to the class
of $g_v$, then $v_b$ is uniquely determined and has the claimed regularity.
Indeed, $\alpha$ is the symbol of
$$
F:=\sqrt{\mu_1}F_1 + \sqrt{\mu_2} F_2,
\qquad {\sf D}(F)={_0H}^{1/2}_p(J;L_p(\R^n))\cap L_p(J;H^1_p(\R^n)),
$$
which is a bounded invertible operator from
its domain into $L_p(J;L_p(\R^n))$,
and hence also from $D_F(2-1/p,p)$ into $D_F(1-1/p,p).$
Here we note that
$$
D_F(\theta,p)=D_{F_j}(\theta,p)={_0W}^{\theta/2}_p(J;L_p(\R^n))
\cap L_p(J;W^\theta_p(\R^n)),
$$
for $\theta\in (0,2)$, $\theta\neq 1$.
Therefore, $F^{-1}g_v$ belongs to $D_F(2-1/p,p)$
if and only if $g_v\in D_F(1-1/p,p)$.
Next we note that $\gamma$ is the symbol of
$
\sqrt{\mu_2} F_2 -\sqrt{\mu_1} F_1 -[\![\mu]\!]D_n^{1/2}
$
which is bounded from $D_F(2-1/p,p)$ to $D_F(1-1/p,p)$, and
$\beta$ is the symbol of $(\mu_1+\mu_2)D_n^{1/2}$ which has the same mapping properties.
\smallskip\\
(iii)
It remains to show that $w_b$ and
$(R|v_b)$ belong to $D_F(2-1/p,p)$.
For $\hat w_b$ and $(\zeta|\hat v_b)$ we have the equations
\begin{equation*}
\begin{split}
(\alpha  + \beta) (\zeta|\hat v_b) +i\gamma \hat w_b
&=(\zeta|\hat g_v),\\
-i\gamma(\zeta|\hat v_b) +(\alpha+\delta)\hat w_b &=\hat g_w
\end{split}
\end{equation*}
since $|\zeta|=1$. Solving this 2-D system we obtain
\begin{equation}
\label{v-w-1}
\begin{aligned}
\hat w_b &= m^{-1}[ i\gamma (\zeta |\hat g_v) +(\alpha+\beta)\hat g_w], \\
(\zeta |\hat v_b) &= m^{-1}[ (\alpha+\delta)(\zeta |\hat g_v)-i\gamma\hat g_w],
\end{aligned}
\end{equation}
where
\begin{equation*}
\label{m}
m=(\alpha+\beta)(\alpha+\delta)-\gamma^2.
\end{equation*}
Since $\delta =\beta +(\rho_1+\rho_2)\lambda/|\xi|$ we obtain the following relation for $m$
\begin{equation*}
m = (\alpha+\beta)[(\rho_1+\rho_2)\frac{\lambda}{|\xi|} + 4 ( \frac{1}{\eta_1}+\frac{1}{\eta_2})^{-1}]=:(\alpha+\beta)n,
\end{equation*}
where $\eta_1= \sqrt{\mu_1}\omega_1 + \mu_2 |\xi|$ and $\eta_2= \sqrt{\mu_2}\omega_2 + \mu_1 |\xi|$.
This yields
\begin{equation}
\label{vw-symbol}
\begin{split}
\hat w_b &=  \frac{i\gamma}{(\alpha+\beta)n}(\zeta |\hat g_v)
+ \frac{\hat g_w}{n}, \\
 (\zeta |\hat v_b) &=
 \frac{(\rho_1+\rho_2)\lambda/|\xi|}{(\alpha+\beta)n}
 (\zeta |\hat g_v)+\frac{1}{n}
[(\zeta |\hat g_v)-\frac{i\gamma}{\alpha+\beta}\hat g_w].
\end{split}
\end{equation}
We define the operators $T_j$ by means of their symbols $\eta_j$, i.e.
$$
T_1:= \sqrt{\mu_1} F_1 + \mu_2 D_n^{1/2},
\quad T_2 :=\sqrt{\mu_2} F_2 +\mu_1 D_n^{1/2},
\quad {\sf D}(T_j)={\sf D}(F_j)={\sf D}(F)
$$
Then by the Dore-Venni theorem operators, the operators $T_j$
with domains $D(T_j)=D(F_j)=D(F)$
are invertible, sectorial with angle $\pi/4$. Moreover, they admit an $H^\infty$-calculus with
$H^\infty$-angle $\pi/4$, see for instance \cite{PrSi06}.
The harmonic mean $T$ of $T_1$ and $T_2$, i.e.
$$T:=2T_1T_2(T_1+T_2)^{-1}= 2(T^{-1}_1+T^{-1}_2)^{-1}$$
enjoys the same properties, as another application of the Dore-Venni theorem shows. The symbol
of $T$ is given by $\eta:=2\eta_1\eta_2/(\eta_1+\eta_2)$.
\smallskip\\
Next we consider the operator $GD_n^{-1/2}$ with domain
\begin{equation*}
\begin{split}
{\sf D}(GD_n^{-1/2})&=\{h\in \cR(D_n^{1/2}):\; D_n^{-1/2}h\in \cD(G)\}\\
&={_0H}^1_p(J;\dot{H}^{-1}_p(\R^n))\cap L_p(J;L_p(\R^n))
\end{split}
\end{equation*}
The inclusion from left to right in the last equality is obvious. The converse can be seen as follows.  
Let $h\in{_0H}^1_p(J;\dot{H}^{-1}_p(\R^n))\cap L_p(J;L_p(\R^n))$ and 
define $g:=\dot{D}_n^{-1/2}h$. Then 
$$g\in {_0H}^1_p(J;L_p(\R^n))\cap L_p(J;\dot{H}^1_p(\R^n))\hookrightarrow L_p(J;H^1_p(\R^n)),$$ 
and $D_n^{1/2}g =\dot{D}_n^{1/2}g=h\in L_p(J;L_p(\R^n))$,
which implies that $h\in \cR(D_n^{1/2})$ and $g=\dot{D}_n^{-1/2}h= D_n^{-1/2}h\in \cD(G).$
The operator
$GD_n^{-1/2}$ is closed, sectorial and admits a bounded $H^\infty$-calculus
with $H^\infty$-angle
$\pi/2$ on $X=L_p(J;L_p(\R^n))$; see for instance \cite[Corollary 2.2]{HaHi05}.
Its symbol is given by $\lambda/|\xi|$.
\smallskip\\
Finally, we consider the operator
\begin{equation}
\label{N}
N:=(\rho_1+\rho_2)GD_n^{-1/2} + 2 T,
\end{equation}
with domain
\begin{equation*}
{\sf D}(N)={\sf D}(GD_n^{-1/2})\cap {\sf D}(T)={_0H}^1_p(J;\dot{H}^{-1}_p(\R^n))\cap L_p(J;H^1_p(\R^n));
\quad 
\end{equation*}
recall \eqref{embedding-negative}.
By the Dore-Venni theorem $N$ is closed, invertible, and by \cite{PrSi06}
admits a bounded $H^\infty$-calculus as well, with $H^\infty$-angle $\pi/2$.
Its symbol is $n$.
\medskip\\
The operator with symbol $\gamma$ is then given by $T_2-T_1$,
and the operator with symbol $\alpha+\beta$ by $T_1+T_2$.
For the inverse transforms $w_b$ and $(R|v_b)$ of
$\hat w_b$ and $(\zeta |\hat v_b)$ we then obtain the representations
\begin{equation}
\begin{split}
\label{rep}
  w_b &= N^{-1}[(T_2-T_1)(T_1+T_2)^{-1}i(R|g_v) + g_w]\\
(R|v_b)&= (T_1+T_2)^{-1}(\rho_1+\rho_2)GD^{-1/2}_n N^{-1}(R|g_v) \\
&\qquad +  N^{-1}(R|g_v)-(T_2-T_1)(T_1+T_2)^{-1}N^{-1}ig_w\,.
\end{split}
\end{equation}
We note that $N^{-1}$ has the following mapping properties
\begin{equation*}
\begin{aligned}
& N^{-1}: L_p(J;L_p(\R^n))\rightarrow \ {_0H}^1_p(J;\dot{H}^{-1}_p(\R^n))\cap L_p(J;H^1_p(\R^n)) \hookrightarrow  L_p(J;L_p(\R^n)), \\
& N^{-1}: L_p(J;\dot{H}^1_p(\R^n))\rightarrow \ {_0H}^1_p(J;L_p(\R^n))\cap L_p(J;\dot{H}^2_p(\R^n))\hookrightarrow  L_p(J;L_p(\R^n)).
\end{aligned}
\end{equation*}
Therefore by three-fold real interpolation
\begin{equation}
\label{N-inverse-Lp}
N^{-1}: L_p(J;\dot{W}^{1-1/p}_p(\R^n))\rightarrow {_0H}^1_p(J;\dot{W}^{-1/p}_p(\R^n))\cap L_p(J;W^{2-1/p}_p(\R^n)).
\end{equation}
Moreover, $N^{-1}$ maps ${_0W}^{1/2-1/2p}_p(J;L_p(\R^n))$ into
\begin{equation}
\label{N-inverse-W^s}
{_0W}^{3/2-1/2p}_p(J;\dot H^{-1}_p(\R^n))\cap{_0W}^{1/2-1/2p}_p(J;H^1_p(\R^n)).
\end{equation}
Next we note that
the operators $T_j(T_1+T_2)^{-1}$ are bounded in $D_F(1-1/p,p)$,
as is the Riesz transform $R$,
and the assertion for $w_b$ follows now from~\eqref{rep}--\eqref{N-inverse-Lp}
and
\begin{equation*}
{_0W}^{1/2-1/2p}_p(J;L_p(\R^n))\cap L_p(J;W^{1-1/p}_p(\R^n))
\hookrightarrow  L_p(J;\dot{W}^{1-1/p}_p(\R^n))
\end{equation*}
The assertions for $(R|v_b)$ follow readily from
\eqref{embedding-negative} and ~\eqref{rep}--\eqref{N-inverse-W^s}.
\end{proof}
We can now formulate our second main result of this section
concerning the solvability of the  problem
\begin{equation}
\label{linStokes-homogen}
\left\{
\begin{aligned}
\rho\partial_tu -\mu\Delta u+\nabla \pi&=0
    &\ \hbox{in}\quad &\dot\R^{n+1}\\
{\rm div}\,u&=0&\ \hbox{in}\quad &\dot\R^{n+1}\\
-[\![\mu\partial_y v]\!] -[\![\mu\nabla_{x}w]\!]&=g_v
    &\ \hbox{on}\quad &\R^n\\
-2[\![\mu\partial_y w]\!] +[\![\pi]\!] &=g_w
    &\ \hbox{on}\quad &\R^n\\
[\![u]\!] &=0 &\ \hbox{on}\quad &\R^n\\
u(0)&=0 &\ \hbox{in}\quad &\dot\R^{n+1}.\\
\end{aligned}
\right.
\end{equation}
\begin{corollary}
\label{co:4.2}
Let $1<p<\infty$ and assume
that $\rho_j$ and $\mu_j$ are positive constants, $j=1,2$, and set $J=[0,a]$.
Then \eqref{linStokes-homogen}
admits a unique solution $(u,\pi)$ with
$$
u\in {_0H}^1_p(J;L_p(\R^{n+1},\R^{n+1}))
      \cap L_p(J;H^2_p(\dot{\R}^{n+1},\R^{n+1})),
\quad \pi\in L_p(J;\dot{H}^1_p(\dot{\R}^{n+1}))
$$
if and only if $g=(g_{v},g_{w})$ satisfies the following regularity
assumptions
\begin{enumerate}
\item[(a)]
$g_v\in {_0W}^{1/2-1/2p}_p(J;L_p(\R^n,\R^n))\cap L_p(J;W^{1-1/p}_p(\R^n,\R^n)),$
\vspace{1mm}
\item[(b)]
$g_w\in  L_p(J;\dot{W}^{1-1/p}_p(\R^n)).$
\end{enumerate}
\end{corollary}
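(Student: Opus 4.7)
The plan is to combine Theorem~\ref{th:4.1} with Proposition~\ref{pro:3.3} in a direct way: the Dirichlet-to-Neumann operator $\cDN$ is an isomorphism between the Dirichlet and Neumann spaces, while Proposition~\ref{pro:3.3} furnishes the Dirichlet harmonic extension with full maximal regularity. Composing these two steps solves \eqref{linStokes-homogen}, and the resulting map is the inverse of the trace-followed-by-$\cDN$ map.

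For the \emph{if} direction, suppose $g=(g_v,g_w)$ lies in the Neumann space described in (a)--(b). By Theorem~\ref{th:4.1} there exists a unique $u_b=(v_b,w_b)$ in the Dirichlet space
\[
\begin{aligned}
v_b &\in {}_0W^{1-1/2p}_p(J;L_p(\R^n,\R^n))\cap L_p(J;W^{2-1/p}_p(\R^n,\R^n)),\\
w_b &\in {}_0H^1_p(J;\dot W^{-1/p}_p(\R^n))\cap L_p(J;W^{2-1/p}_p(\R^n))
\end{aligned}
\]
such that $\cDN u_b = g$. Feed this $u_b$ into Proposition~\ref{pro:3.3}: it gives a unique solution $(u,\pi)$ of the Dirichlet Stokes problem \eqref{SPD} with the regularity required in the corollary. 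Since by the definition of $\cDN$ in \eqref{def-ND-2} the pair $(u,\pi)$ satisfies the stress jump conditions with right-hand side $\cDN u_b=g$, and it automatically satisfies $\rho\partial_t u-\mu\Delta u+\nabla\pi=0$, $\operatorname{div}u=0$, $[\![u]\!]=0$ (because $v_b,w_b$ are single-valued traces) and $u(0)=0$, it solves \eqref{linStokes-homogen}.

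For the \emph{only if} direction, given a solution $(u,\pi)$ of \eqref{linStokes-homogen} in the stated regularity class, set $u_b:=u|_{y=0}$. The tangential trace theorem applied to $u\in {}_0H^1_p(J;L_p(\R^{n+1}))\cap L_p(J;H^2_p(\dot\R^{n+1}))$ gives $v_b\in{}_0W^{1-1/2p}_p(J;L_p(\R^n,\R^n))\cap L_p(J;W^{2-1/p}_p(\R^n,\R^n))$. For the normal trace $w_b$, the extra regularity ${}_0H^1_p(J;\dot W^{-1/p}_p(\R^n))$ is recovered from the divergence-free condition combined with \eqref{trace-homogeneous}: indeed $\partial_y w=-\operatorname{div}_x v$, so $w_b$ inherits the required time regularity in the negative-order space $\dot W^{-1/p}_p(\R^n)$ from $\partial_t v\in L_p(J;L_p)$ through the Stokes equation (equivalently, one reads it off $\hat w_b = -(i\zeta|\hat v_b)$ at $y=0$ coming from $\operatorname{div}u=0$). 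Hence $u_b$ belongs to the Dirichlet space, $(u,\pi)$ coincides with the Dirichlet extension from Proposition~\ref{pro:3.3}, and consequently $g=\cDN u_b$ lies in the Neumann space by Theorem~\ref{th:4.1}.

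Uniqueness is immediate: any two solutions share the same $u_b=(\cDN)^{-1}g$ and the Dirichlet problem \eqref{SPD} is uniquely solvable by Proposition~\ref{pro:3.3}. The only delicate point I expect is justifying that the normal trace $w_b$ of a solution has the precise negative-order time regularity ${}_0H^1_p(J;\dot W^{-1/p}_p(\R^n))$ (and not just $L_p$ in time with values in a fractional space); this must be extracted from the divergence constraint rather than from a direct trace theorem, and is the only place where the homogeneous spaces enter in an essential way.
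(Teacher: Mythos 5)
Your existence and uniqueness argument is exactly the paper's proof: set $u_b=(\cDN)^{-1}(g_v,g_w)$ using Theorem~\ref{th:4.1} and solve the Dirichlet problem \eqref{SPD} with Proposition~\ref{pro:3.3}; that part is fine and needs no comment.

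The one place where your write-up goes astray is the justification of the normal-trace regularity in the \emph{only if} direction. The identity you offer in parentheses, $\hat w_b=-(i\zeta|\hat v_b)$ ``at $y=0$ coming from ${\rm div}\,u=0$,'' is false: the divergence constraint gives $\partial_y\hat w=-(i\xi|\hat v)$ in the interior, not a relation between the boundary values, and indeed $v_b$ and $w_b$ are independent Dirichlet data in \eqref{SPD}, so no such pointwise relation can hold. Your instinct that the extra regularity $w_b\in{_0H}^1_p(J;\dot W^{-1/p}_p(\R^n))$ must come from the divergence constraint (via a weak normal-trace argument for the solenoidal field $\partial_t u\in L_p$) is sound, but the cleanest repair inside the paper's framework is simply to quote the necessity half of Proposition~\ref{pro:3.3}: a solution $(u,\pi)$ of \eqref{linStokes-homogen} in the stated class solves \eqref{SPD} with datum $u_b=\gamma u$, and Proposition~\ref{pro:3.3} (being an ``if and only if'') then places $u_b$ in the Dirichlet space, after which Theorem~\ref{th:4.1} and \eqref{def-ND-2} give $g=(\cDN)u_b$ in the Neumann space. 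Alternatively, and even more directly, the regularity (a)--(b) of $g$ is just trace theory applied to $[\![\mu\partial_y v]\!]+[\![\mu\nabla_x w]\!]$ and $-2[\![\mu\partial_y w]\!]+[\![\pi]\!]$, using \eqref{trace-homogeneous} for the pressure jump, which is how the paper disposes of necessity in the discussion preceding Theorem~\ref{th:3.1}. With either repair your uniqueness argument (which tacitly uses that the trace of any solution lies in the Dirichlet space, so that injectivity of $\cDN$ applies) also closes up.
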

\begin{proof}
Let $u_b:=(v_b,w_b):=(\cDN)^{-1}(g_v,g_w)$, and let
$(u,\pi)$ be the solution of \eqref{SPD}.
Thanks to Theorem~\ref{th:4.1} and Proposition~\ref{pro:3.3},
$(u,\pi)$ satisfies the regularity assertion of the Corollary, and
it is the unique solution of \eqref{linStokes-homogen}
due to the definition of $\cDN.$
\end{proof}
\begin{remark}
The representation formulas in \eqref{v_2}--\eqref{v_1}
have also been derived and used by other authors,
see for instance \cite{Deni94, SS03}.
However, the optimal regularity results
in Theorem~\ref{th:3.1}, Proposition~\ref{pro:3.3},
Theorem~\ref{th:4.1}, and Corollary~\ref{co:4.2} are new. 
Moreover, the computations and arguments 
leading to these results are shorter
than in \cite{Deni94} (which only deals with the case $p=2$)
and in \cite{SS03}. We should mention, however,
that these authors consider more general domains.
\end{remark}
\section{The Linearized Two-Phase Stokes Problem with Free boundary}
In this section we consider the full linearized problem
\begin{equation}
\label{linFB}
\left\{
\begin{aligned}
\rho\partial_tu -\mu\Delta u+\nabla \pi&=f
    &\ \hbox{in}\quad &\dot\R^{n+1}\\
{\rm div}\,u&= f_d&\ \hbox{in}\quad &\dot\R^{n+1}\\
-[\![\mu\partial_y v]\!] -[\![\mu\nabla_{x}w]\!]&=g_v
    &\ \hbox{on}\quad &\R^n\\
-2[\![\mu\partial_y w]\!] +[\![\pi]\!] -\sigma\Delta h &=g_w
    &\ \hbox{on}\quad &\R^n\\
[\![u]\!] &=0 &\ \hbox{on}\quad &\R^n\\
\partial_th-\gamma w &=g_h &\ \hbox{on}\quad &\R^n\\
u(0)=u_0,\; h(0)&=h_0. &\\
 \end{aligned}
\right.
\end{equation}

We are interested in the same regularity classes for $u$ and $\pi$ as before. Then the equation for the height function
$h$ lives in the trace space
$
W^{1-1/2p}_p(J;L_p(\R^{n}))\cap L_p(J;W^{2-1/p}_p(\R^n)),
$
hence the natural space for
$h$ is given by
$$ h\in W^{2-1/2p}_p(J;L_p(\R^{n}))\cap H^1_p(J;W^{2-1/p}_p(\R^n))
\cap L_p(J;W^{3-1/p}_p(\R^n)).$$
Our next theorem states that problem (\ref{linFB}) admits maximal regularity, in particular defines an isomorphism between
the solution space and the space of data.
\begin{theorem}
\label{th:5.1}
Let $1<p<\infty$ be fixed, $p\neq 3/2,3$, and assume that $\rho_j$ and $\mu_j$ are positive
constants for $j=1,2$, and set $J=[0,a]$.
Then the  Stokes problem with free boundary \eqref{linFB} admits a unique solution $(u,\pi,h)$ with regularity
\begin{equation}
\label{reg}
\begin{split}
&u\in H^1_p(J;L_p(\R^{n+1},\R^{n+1}))
  \cap L_p(J;H^2_p(\dot{\R}^{n+1},\R^{n+1})), \\
& \pi\in L_p(J;\dot{H}^1_p(\dot{\R}^{n+1})), \\
&[\![\pi]\!]\in W^{1/2-1/2p}_p(J;L_p(\R^{n}))\cap L_p(J;W^{1-1/p}_p(\R^{n})),\\
& h\in W^{2-1/2p}_p(J;L_p(\R^{n}))\cap H^1_p(J;W^{2-1/p}_p(\R^n))
\cap L_p(J;W^{3-1/p}_p(\R^n))
\end{split}
\end{equation}
if and only if the data
$(f,f_d,g,g_h,u_0,h_0)$
satisfy the following regularity and compatibility conditions:
\begin{itemize}
\item[(a)]
$f\in L_p(J;L_p(\R^{n+1},\R^{n+1}))$,
\vspace{1mm}
\item[(b)]
$f_d\in H^1_p(J; \dot{H}^{-1}_p(\R^{n+1}))\cap L_p(J; H^1_p(\dot{\R}^{n+1}))$,
\vspace{1mm}
\item[(c)]
$g=(g_v,g_w)\in W^{1/2-1/2p}_p(J;L_p(\R^{n},\R^{n+1}))
\cap L_p(J;W^{1-1/p}_p(\R^{n},\R^{n+1}))$,
\vspace{1mm}
\item[(d)]
$g_h\in W^{1-1/2p}_p(J;L_p(\R^{n}))\cap L_p(J;W^{2-1/p}_p(\R^{n}))$,
\vspace{1mm}
\item[(e)]
$u_0\in W^{2-2/p}_p(\dot{\R}^{n+1},\R^{n+1})$, $h_0\in W^{3-2/p}_p(\R^n)$,
\vspace{1mm}
\item[(f)]
${\rm div}\, u_0=f_d(0)$ in $\,\dot\R^{n+1}$ and $[\![u_0]\!]=0$
on $\,\R^n$ if $p>3/2$,
\vspace{1mm}
\item[(g)]
$-[\![\mu\partial_y v_0]\!] -[\![\mu\nabla_{x}w_0]\!] ={g_v}(0)$ on
$\,\R^n$ if $p>3$.
\end{itemize}
The solution map $[(f,f_d,g,g_h, u_0,h_0)\mapsto (u,\pi,h)]$ is continuous between the corresponding spaces.
\end{theorem}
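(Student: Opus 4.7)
The plan is to first verify necessity via Theorem~\ref{th:3.1} and trace theory, and then for sufficiency reduce \eqref{linFB} to a scalar evolution equation for $h$ by means of the Dirichlet-to-Neumann operator of Section~4. For necessity, given a solution $(u,\pi,h)$ in the class~\eqref{reg}, conditions (a)--(c) and (e)--(g) follow by applying Theorem~\ref{th:3.1} to the first five lines of \eqref{linFB} with the combined boundary datum $g_w+\sigma\Delta h$; the regularity~\eqref{reg} for $h$ guarantees that $\sigma\Delta h$ lies in the correct space for the pressure-jump datum. Condition~(d) on $g_h$ is read off from $g_h=\partial_t h-\gamma w$, since both $\partial_t h$ (by the assumed class of $h$) and $\gamma w$ (by trace theory) live in $W^{1-1/2p}_p(J;L_p(\R^n))\cap L_p(J;W^{2-1/p}_p(\R^n))$.

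For sufficiency I would proceed by three reductions. First, apply Theorem~\ref{th:3.1} to the data $(f,f_d,g_v,0,u_0)$ to produce $(u^{(1)},\pi^{(1)})$ and subtract, reducing to a problem with $(f,f_d,g_v,u_0)=0$ and modified $g_w,g_h$. Second, extend $h_0$ to a function $\bar h$ with the regularity claimed for $h$ in~\eqref{reg} (for instance via a Poisson semigroup extension) and subtract to further reduce to $h_0=0$; the $\bar h$-terms are absorbed into the boundary data. After these two steps the remaining system has $g_v=0$, $(f,f_d,u_0,h_0)=0$, and a pressure-jump datum of the form $\sigma\Delta h+\tilde g_w$ depending on the unknown $h$. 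Using the representation~\eqref{vw-symbol} of the inverse Dirichlet-to-Neumann operator with $g_v=0$, the trace of the vertical velocity reads $\gamma w=N^{-1}(\sigma\Delta h+\tilde g_w)$, where $N$ is the operator defined in~\eqref{N}. Substituting into the kinematic condition produces the scalar evolution equation
\[
\partial_t h +\sigma N^{-1}D_n\,h\,=\,\tilde g_h+N^{-1}\tilde g_w,\qquad h(0)=0,
\]
with Fourier--Laplace symbol $s(\lambda,\xi)=\lambda+\sigma|\xi|^2/n(\lambda,\xi)$, where $n$ is the symbol of $N$.

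To solve this scalar equation I would establish maximal $L_p$-regularity for the operator $\cS:=G+\sigma N^{-1}D_n$ on $X=L_p(J;L_p(\R^n))$. All building blocks $G,\,D_n^{1/2},\,F_j,\,T_j,\,N$ from Sections~3 and~4 are invertible sectorial operators which commute pairwise and admit bounded $H^\infty$-calculi of angles at most $\pi/2$ (see \cite{DHP03, KW01, PrSi06}). An asymptotic analysis of $n$ shows that $n(\lambda,\xi)\sim(\rho_1+\rho_2)\lambda/|\xi|$ when $|\lambda|\gg|\xi|^2$ and $n(\lambda,\xi)\asymp|\xi|$ otherwise, whence $s$ behaves like $\lambda$ at large $\lambda$ and like $\lambda+c|\xi|$ at high frequencies. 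In particular $s$ is uniformly sectorial on every proper subsector of the right half plane; the Kalton--Weis joint functional calculus then delivers a bounded $H^\infty$-calculus for $\cS$ of angle $<\pi/2$, and the associated maximal regularity yields $h$ in the class~\eqref{reg} from data $(\tilde g_h,\tilde g_w)$ in the spaces of~(c), (d). To close the argument, given this $h$ I would reconstruct $u_b:=\cDN^{-1}(0,\sigma\Delta h+\tilde g_w)$ via Theorem~\ref{th:4.1}, solve the associated Dirichlet problem by Proposition~\ref{pro:3.3}, and sum with the previous reductions. Uniqueness is immediate by reversing the procedure.

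The hardest step is the last one: verifying that $\cS$ enjoys a bounded $H^\infty$-calculus of the required angle. The symbol $s$ is a non-polynomial rational expression in $\lambda,|\xi|$ and the square roots $\omega_j=\sqrt{\rho_j\lambda+\mu_j|\xi|^2}$, and the sectorial bounds on $s^{-1}$ required to apply Kalton--Weis must be proved by a careful case analysis in the regions $|\lambda|\le|\xi|^2$ and $|\lambda|\ge|\xi|^2$, exploiting the positivity of $\rho_j,\mu_j,\sigma$; once these symbol estimates are in place, the rest is a routine application of the joint $H^\infty$-calculus.
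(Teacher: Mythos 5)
Your overall architecture coincides with the paper's: reduce to a problem with vanishing bulk data and initial values, express the trace $\gamma w$ through the inverse Dirichlet-to-Neumann operator, arrive at the scalar equation with operator $S=G+\sigma D_nN^{-1}$ and symbol $s(\lambda,|\xi|)=\lambda+\sigma|\xi|^2/n$, invert it by symbol estimates plus the Kalton--Weis theorem, and then recover $(u,\pi)$ from Corollary~\ref{co:4.2} (equivalently Theorem~\ref{th:4.1} and Proposition~\ref{pro:3.3}). There are, however, two points where your argument as written has a gap. First, your reduction of $h_0$ by ``a Poisson semigroup extension $\bar h$'' removes the initial value of $h$ but does not normalize the time trace of the reduced kinematic datum: after your two subtractions the right-hand side of the $h$-equation is $\tilde g_h+N^{-1}\tilde g_w$, and while $N^{-1}\tilde g_w$ automatically lies in a ${_0}$-space, $\tilde g_h$ has trace $g_h(0)+\gamma w_0-\partial_t\bar h(0)$ at $t=0$, which for a pure Poisson extension is $g_h(0)+\gamma w_0+D_n^{1/2}h_0\neq 0$ in general (relevant for $p>3/2$). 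Since the inverse of $S$ is constructed on the ${_0}H^r_p$-scale, where $G$ is invertible, the solution operator cannot be applied to this datum directly; this is exactly why the paper's auxiliary function $h_1$ in \eqref{tilde-h} carries the extra term $(1+D_n)^{-1}[e^{-(1+D_n)t}-e^{-2(1+D_n)t}](g_h(0)+\gamma w_0)$, which enforces $\partial_t h_1(0)=g_h(0)+\gamma w_0$ and hence $\tilde g_h\in{_0}W^{1-1/2p}_p(J;L_p(\R^n))$. Your reduction needs an analogous correction (a fixable, but necessary, step).

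Second, you claim that $s$ is ``uniformly sectorial on every proper subsector of the right half plane'' and that $S$ admits a bounded $H^\infty$-calculus of angle $<\pi/2$. This is stronger than what is needed and stronger than what the symbol estimates give without further work: the lower bound the paper proves, \eqref{estsymb}, holds only for $\lambda\in\Sigma_{\pi/2+\eta}$ with $|\lambda|\ge\lambda_0>0$, and the argument exploits that on the finite interval $J$ the operator $G$ on ${_0}$-spaces is invertible, so only the region $|\lambda|\ge\lambda_0$ is relevant when applying the Kalton--Weis theorem to the operator-valued symbol $(\lambda+D_n^{1/2})s^{-1}(\lambda,D_n^{1/2})$. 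The behavior of $s$ near $\lambda=0$ (low frequencies) is delicate and is treated separately in \cite{PrSi08}; your proof should either restrict to $|\lambda|\ge\lambda_0$ as the paper does, or supply the missing uniform estimates near the origin. With these two repairs -- the time-trace correction in the reduction and the weakened (but sufficient) symbol estimate -- your proof becomes essentially the paper's proof; the remaining steps (necessity via Theorem~\ref{th:3.1} and trace theory, recovery of $(u,\pi)$ using the embedding \eqref{E} to check that $\sigma\Delta h$ lies in the Neumann class, and uniqueness) are carried out as you indicate.
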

\begin{proof}
Similarly as in the proof of Thereom~\ref{th:3.1} we will
reduce system~\eqref{linFB} to the case where
$(f,f_d,g,u_0,h_0)=(0,0,0,0,0)$ and $g_h(0)=0$.
The Neumann-to-Dirichlet operator will once again play an essential
role in order to treat the resulting reduced problem.
\smallskip\\
(i) Let
$$
h_1(t):=
[2e^{-D_n^{1/2}t}-e^{-2D^{1/2}_nt}]h_0+ (1+D_n)^{-1}[e^{-(1+D_n)t}
-e^{-2(1+D_n)t}](g_h(0)+\gamma w_0),
$$
where $u_0=(v_0,w_0)$ and $\gamma:\R^{n+1}_{\pm}\to \R^n$ is the
trace operator.
The function $h_1$ has the following  properties
\begin{equation}
\label{tilde-h}
\begin{split}
&h_1\in W^{1/2-1/2p}_p(J;H^2_p(\R^n))\cap L_p(J;W^{3-1/p}_p(\R^n))\\
&\qquad \cap
 W^{2-1/2p}_p(J;L_p(\R^{n})) \cap H^1_p(J;W^{2-1/p}_p(\R^n)), \\
&h_1(0)=h_0,\quad \partial_t h_1(0)=g_h(0)+\gamma w_0,
\end{split}
\end{equation}
see  \cite[Lemma 6.4]{PSS07} for a proof of a similar result.
Let then $(u_1,\pi_1)$ be the solution of problem~\eqref{linStokes},
with $g_w$ replaced by $g_w+ \sigma\Delta h_1$.
It follows from Theorem~\ref{th:3.1}, the assumptions on $g=(g_v, g_w)$,
and from the first line in \eqref{tilde-h} that $(u_1,\pi_1)$ satisfies
the regularity properties stated in Theorem~\ref{th:5.1}.
\smallskip\\
(ii)
Next we consider the reduced problem
\begin{equation}
\label{linFB0}
\left\{
\begin{aligned}
\rho\partial_tu -\mu\Delta u+\nabla \pi&=0
    &\ \hbox{in}\quad &\dot\R^{n+1}\\
{\rm div}\,u&= 0&\ \hbox{in}\quad &\dot\R^{n+1}\\
-[\![\mu\partial_y v]\!] -[\![\mu\nabla_{x}w]\!]&=0
    &\ \hbox{on}\quad &\R^n\\
-2[\![\mu\partial_y w]\!] +[\![\pi]\!] -\sigma\Delta h &=0
    &\ \hbox{on}\quad &\R^n\\
[\![u]\!] &=0 &\ \hbox{on}\quad &\R^n\\
\partial_th-\gamma w&={\tilde g}_h &\ \hbox{on}\quad &\R^n\\
u(0)=0,\; h(0)&=0, \\
\end{aligned}
\right.
\end{equation}
with ${\tilde g}_h:=g_h-(\partial_t h_1-\gamma w_1)$,
where $u_1=(v_1,w_1)$ is the solution obtained in step~(i).
We conclude from \eqref{tilde-h} and the regularity properties
of $\gamma w_1$ that
\begin{equation}
\label{regularity-h-tilde}
{\tilde g}_h\in {_0}W^{1-1/2p}_p(J;L_p(\R^n))\cap L_p(J;W^{2-1/p}_p(\R^n)).
\end{equation}
Suppose that problem~\eqref{linFB0} admits a solution $(u_2,\pi_2,h_2)$   with the regularity properties stated in \eqref{reg}.
One readily verifies that
$(u,\pi,h):=(u_1+u_2,\pi_1+\pi_2,h_1+h_2)$
is a solution of problem~\eqref{linFB} in the regularity class
of \eqref{reg}.
\smallskip\\
(iii) It thus remains to show that the reduced
problem~\eqref{linFB0} admits a unique solution $(u,\pi,h)$
in the regularity class stated in Theorem~\ref{th:5.1}.
We note that once $h$ has been determined,
Corollary~\ref{co:4.2} yields the corresponding pair $(u,\pi)$
in problem~\eqref{linFB0}.
\\
To determine $h$ we extract the {\em boundary symbol} for this problem as follows. Applying the
Neumann-to-Dirichlet operator $(\cDN)^{-1}$ to $(g_v,g_w)=(0,\sigma D_n h)$
yields $\gamma u=u_b$, the trace of $u$.
According to \eqref{vw-symbol}, the tranform of the normal component
$\gamma w=w_b$ of $u_b$ is given by
\begin{equation*}
\label{hat-w-b}
 \hat w_b=
 \frac{-\sigma|\xi|^2}{(\rho_1+\rho_2)\lambda/|\xi|+4\eta_1\eta_2/(\eta_1+\eta_2)}\hat h.
\end{equation*}
Let us now consider the equation $\partial_th-\gamma w={\tilde g}_h$.
Inserting this expression for $\hat w_b$ into the transformed equation
$\lambda\hat h-\hat w_b=\hat{\tilde g}_h$ results in
$
s(\lambda,|\xi|)\hat h=\hat {\tilde g}_h
$
where the boundary symbol $s(\lambda,|\xi|)$ is
given by
\begin{equation}
\label{symbol}
s(\lambda,|\xi|)=\lambda+\frac{\sigma|\xi|^2}{(\rho_1+\rho_2)\lambda/|\xi|+4\eta_1\eta_2/(\eta_1+\eta_2)}.
\end{equation}
The operator corresponding to this symbol is
\begin{equation}
\label{S}
S= G + \sigma D_n N^{-1},
\end{equation}
where the meaning of the operators $G, D_n$ and $N$ is as in Section 4.
$S$ has the following mapping properties:
\begin{equation}
\label{mapping-S}
S: {_0H}^{r+1}_p(J;K^s_p(\R^n))\cap {_0}H^r_p(J;K^{s+1}_p(\R^n))
\to {_0}H^r(J;K^s_p(\R^n)),
\end{equation}
where $K\in\{H,W\}$.
In order to find $h$ we need to solve the equation $Sh={\tilde g}_h$,
that is, we need to show that $S$ is invertible in appropriate
function spaces.

All operators in the definition of $S$ commute,
and admit an $H^\infty$-calculus. The
$H^\infty$-angle of $D_n$ is zero, that of $N$ is $\pi/2$ and that of $G$ is $\pi/2$ as well. Thus we can a-priori not guarantee that the sum
of the power-angles of the single operators in $S$ is strictly
less than $\pi$, and the Dore-Venni approach is therefore
not directly applicable.
We will instead apply a result of
Kalton and Weis \cite[Theorem~4.4]{KW01}.

For this purpose note that for complex numbers
$w_j$ with ${\rm arg}\, w_j\in[0,\pi/2)$, we have
${\rm arg}\,(w_1w_2)/(w_1+w_2)={\rm arg}\,(1/w_1+1/w_2)^{-1}\in[0,\pi/2)$
as well. This implies that $s(\lambda,|\xi|)$ has strictly
positive real part for each $\lambda$ in the closed right halfplane and for each $\xi\in\R^n$, $(\lambda,\xi)\neq(0,0)$,
 hence $s(\lambda,|\xi|)$
does not vanish for such $\lambda$ and~$\xi$.

We write $s(\lambda,|\xi|)$
in the following way:
\begin{equation}
\label{s-lambda-tau}
s(\lambda,\tau)=\lambda+\sigma \tau k(z),\quad z=\lambda/\tau^2,
\ \lambda\in\C,\ \tau\in \C\setminus\{0\},
\end{equation}
where $$k(z)=[(\rho_1+\rho_2)z+4(\frac{1}{\sqrt{\mu_1}\sqrt{\rho_1 z+\mu_1}+\mu_2}
+\frac{1}{\sqrt{\mu_2}\sqrt{\rho_2 z+\mu_2}+\mu_1})^{-1}]^{-1}.$$
The asymptotics of $k(z)$ are given by
\begin{equation*}
k(0)= \frac{1}{2(\mu_1+\mu_2)}, \qquad
zk(z)\rightarrow \frac{1}{\rho_1+\rho_2}
\quad\text{for $z\in \C\setminus\R_{-}$ with $|z|\rightarrow\infty$}.
\end{equation*}
This shows that for any $\vartheta\in [0,\pi)$ there is a constant $C=C(\vartheta)>0$ such that
$$|k(z)|\le \frac{C}{1+|z|},\quad z\in \bar\Sigma_{\vartheta}.$$
Hence we see that
$$|s(\lambda,|\xi|)|\le C (|\lambda|+|\xi|),\quad  {\rm Re}\,\lambda\ge0, \; \xi\in\R^n,$$
is valid for some constant $C>0$.
Next we are going to prove that for each $\lambda_0>0$
there are $\eta>0$,
$c>0$  such that
\begin{equation}
\label{estsymb}
|s(\lambda,\tau)|\ge c[|\lambda|+|\tau|], \quad \mbox{ for all }
\lambda\in \Sigma_{\pi/2+\eta},\; |\lambda|\ge\lambda_0,\; \tau\in
\Sigma_\eta.
\end{equation}
This can be seen as follows:
since Re\,$k(z)>0$ for Re\,$z\ge 0$,
by continuity of the modulus and argument we  obtain an estimate
of the form
\begin{eqnarray*}
|s(\lambda,\tau)|\ge c_0[|\lambda|+|\tau||k(z)|]
\ge c[|\lambda|+|\tau|],\quad \lambda\in \Sigma_{\pi/2+\eta},\;
\tau\in \Sigma_\eta,
\end{eqnarray*}
provided $|z|\le M$, with some $\eta>0$ and $c>0$ depending on $M$, but not on
$\lambda$ and $\tau$. On the other hand,
for $m>0$ fixed we consider  the case with
$|\lambda|\ge m|\tau|$, $|z|\ge M$. We then have
\begin{equation*}
\begin{aligned}
|s(\lambda,\tau)|\ge
|\lambda|-\sigma|\tau||k(z)|
\ge \frac{1}{2}[|\lambda|+ m|\tau|]-\sigma C|\tau|/(1+M)
\ge c[|\lambda|+|\tau|],
\end{aligned}
\end{equation*}
provided $m>2\sigma C/(1+M)$, and then by extension
\begin{eqnarray*}
|s(\lambda,\tau)|\ge c[|\lambda|+|\tau|],\quad \lambda\in \Sigma_{\pi/2+\eta},\;
\tau\in \Sigma_\eta,\; |\lambda|\ge m|\tau|,  \;|z|\ge M,
\end{eqnarray*}
provided $\eta>0$ and $c>0$ are sufficiently small.
One easily sees that the intersection point of
the curves $y=Mx^2$ and $y=mx$ in $\R^2$ has distance $d=(m/M)\sqrt{1+m^2}$ from
the origin.
By choosing $M$ large enough so that
$d\le \lambda_0 $,
\eqref{estsymb} follows by combining the two estimates.

By means of the $\cR$-boundedness of the functional calculus for $D_n$
in $K^{s}_p(\R^n)$,  cf.\ Desch, Hieber and Pr\"uss \cite{DHP01a},
we see that
\begin{equation*}
 (\lambda+D_n^{1/2})s^{-1}(\lambda,D_n^{1/2})
\end{equation*}
is of class $H^\infty$ and $\cR$-bounded on
$\Sigma_{\pi/2+\eta}\setminus B_{\lambda_0}(0)$. The operator-valued $H^\infty$-calculus for
$G=\partial_t$ on ${_0H}^r_p(J;K^s_p(\R^n))$, cf.\ Hieber and Pr\"uss \cite{HiPr98},
 implies boundedness of
\begin{equation*}
(G+D_n^{1/2})s^{-1}(G,D_n^{1/2})
\quad\text{in}\quad  {_0H}^r_p(J;K^s_p(\R^n)).
\end{equation*}
This shows that $s^{-1}(G,D_n^{1/2})$ has the following mapping properties:
\begin{equation}
\label{s-inverse}
s^{-1}(G,D_n^{1/2}):{_0H}^r_p(J;K^s_p(\R^n))\to
{_0H}^{r+1}_p(J;K^s_p(\R^n))\cap {_0}H^r_p(J;K^{s+1}_p(\R^n)).
\end{equation}
We conclude that $S$ is invertible and that $S^{-1}=s^{-1}(G,D_n^{1/2})$.
Choosing $r=0$ and $s=2-1/p$ and $K=W$ in \eqref{s-inverse} yields
\begin{equation}
\label{S1}
 S^{-1}:L_p(J;W^{2-1/p}_p(\R^n))
\to {_0H}^1_p(J;W^{2-1/p}_p(\R^n))\cap L_p(J;W^{3-1/p}_p(\R^n)).
\end{equation}
Moreover, we also obtain from \eqref{s-inverse}
\begin{equation*}
\begin{aligned}
&S^{-1}:L_p(J;L_p(\R^n))\to {_0H}^{1}_p(J;L_p(\R^n))  \\
&S^{-1}:{_0H}^1_p(J;L_p(\R^n))\to {_0H}^{2}_p(J;L_p(\R^n)).
\end{aligned}
\end{equation*}
Interpolating with the real method $(\cdot\,,\cdot)_{1-1/p,p}$
then yields
\begin{equation}
\label{S2}
S^{-1}:{_0W}^{1-1/p}_p(J;L_p(\R^n))\to
{_0W}^{2-1/p}_p(J;L_p(\R^n)).
\end{equation}
\eqref{S1}--\eqref{S2} shows that the equation $Sh={\tilde g}_h$ has for each
$\tilde{g}_h$ satisfying \eqref{regularity-h-tilde} a unique solution
$h$ in the regularity class \eqref{reg}.
\smallskip\\
(iv)
Since the function $h$ is now known
we can use Corollary~\ref{co:4.2} to determine the pair $(u,\pi)$
in problem~\eqref{linFB0}. For this we note that
\begin{equation}
\label{E}
{_0}H^1_p(J;W^{2-1/p}_p(\R^n))\cap L_p(J;W^{3-1/p}_p(\R^n))
\hookrightarrow
{_0W}^{1-1/p}_p(J;H^2_p(\R^n))
\end{equation}
see \cite[Lemma 6.2]{PSS07} for a proof.
This shows that the function $h$ determined in step (iii)
satisfies
$$
\Delta h\in{_0W}^{1/2-1/2p}(J;L_p(\R^n))\cap L_p(J;W^{1-1/p}_p(\R^n))
$$
and Corollary~\ref{co:4.2} yields a solution $(u,\pi)$
in the regularity class~\eqref{reg}.
\smallskip\\
(v)
Steps (i)--(iv) render a solution $(u,\pi,h)$
for problem~\ref{linFB} that satisfies
the regularity properties asserted in the Theorem.
It follows from step (iv) and from Theorem~\ref{th:3.1}
that problem~\eqref{linFB0} with
$(f,f_d,g,g_h,u_0,h_0)=(0,0,0,0,0,0)$ has
only the trivial solution,
and this gives uniqueness.
The proof of Theorem 5.1 is now complete.
\end{proof}
\begin{remark}
Further mapping properties of the symbol $s(\lambda,\tau)$
and the associated operator $S$ have been derived in
\cite{PrSi08}. In particular, we have investigated
the singularities and zeros of the boundary symbol $s$, and
we have studied the mapping properties of $S$ in case
of low and high frequencies, respectively.
\end{remark}
\section{The nonlinear problem}
In this section we derive estimates for the nonlinear mappings
occurring on the right hand side of \eqref{tfbns2}.
In order to facilitate this
task, we first introduce some notation, and then study the
mapping properties of the nonlinear functions appearing on the
right hand sight of equation~\eqref{tfbns2}. In the following we
set
\begin{equation}
\begin{split}
&\EE_1(a):= \{u\in H^1_p(J;L_p(\R^{n+1},\R^{n+1}))
\cap L_p(J;H^2_p(\dot\R^{n+1},\R^{n+1})):\: [\![u]\!]=0\}, \\
&\EE_2(a):=L_p(J;\dot H^1_p(\dot\R^{n+1})), \\
&\EE_3(a):=
W^{1/2-1/2p}_p(J;L_p(\R^n)) \cap L_p(J;W^{1-1/p}_p(\R^n)), \\
&\EE_4(a):= W^{2-1/2p}_p(J;L_p(\R^{n}))
\cap H^1_p(J;W^{2-1/p}_p(\R^n))\\
&\hspace{1.3cm}\cap W^{1/2-1/2p}_p(J;H^2_p(\R^n))
\cap L_p(J;W^{3-1/p}_p(\R^n)),\\
&\EE(a)_{\phantom{3}}:= \{(u,\pi,q,h) \in \EE_1(a)\times
\EE_2(a)\times \EE_3(a)\times\EE_4(a):\: [\![\pi]\!]=q\}.
\end{split}
\end{equation}
The space $\EE(a)$ is given the natural norm
\begin{equation*}
\|(u,\pi,q,h)\|_{\EE(a)}
=\|u\|_{\EE_1(a)}+\|\pi\|_{\EE_2(a)}+\|q\|_{\EE_3(a)}+\|h\|_{\EE_4(a)}
\end{equation*}
which turns it into a Banach space.
We remind that $\EE_2(a)$ is equipped with the norm
$
\|\pi\|_{\EE_2(a)}=(\sum_{j=1}^{n+1}
\|\partial_j\pi\|^p_{L_p(J,L_p(\dot\R^{n+1}))})^{1/p}
$
for $\pi:\dot\R^{n+1}\to\R$.

In addition, we define
\begin{equation}
\begin{split}
&\FF_1(a):=L_p(J;L_p(\R^{n+1},\R^{n+1})), \\
&\FF_2(a):=H^1_p(J;\dot H^{-1}_p(\R^{n+1}))\cap L_p(J;H^1_p(\dot\R^{n+1})), \\
&\FF_3(a):=W^{1/2-1/2p}_p(J;L_p(\R^n,\R^{n+1}))
\cap  L_p(J;W^{1-1/p}_p(\R^n,\R^{n+1})), \\
&\FF_4(a):= W^{1-1/2p}_p(J;L_p(\R^{n}))\cap L_p(J;W^{2-1/p}_p(\R^{n})),\\
&\FF(a)_{\phantom{3}}:=\FF_1(a)\times \FF_2(a)\times
\FF_3(a)\times \FF_4(a).
\end{split}
\end{equation}
The generic elements of $\FF(a)$ are the functions $(f,f_d,g,g_h)$.

We list some properties of the function spaces
introduced above that will be used in the sequel.
In the following we say that a function space is a multiplication algebra
if it is a Banach algebra under the operation
of multiplication.
\begin{lemma}
\label{le:6.1} Suppose $p>n+3$ and let $J=[0,a]$.
Then
\begin{itemize}
\item[(a)]
$\EE_3(a)$ and $\FF_4(a)$ are multiplication algebras.
\vspace{1mm}
\item[(b)] 
$\EE_1(a) \hookrightarrow C(J;BU\!C^1(\dot\R^{n+1},\R^{n+1}))
\cap C(J;BU\!C(\R^{n+1},\R^{n+1}))$.
\vspace{1mm}
\item[(c)]
$\EE_3(a)\hookrightarrow C(J;BU\!C(\R^n))$.
\vspace{1mm}
\item[(d)] 
$
\EE_4(a) \hookrightarrow BC^1(J;BC^1(\R^n))\cap
BC(J;BC^2(\R^n)).
$ 
\vspace{1mm}
\item[(e)]
$W^{2-1/2p}_p(J;L_p(\R^{n}))
\cap H^1_p(J;W^{2-1/p}_p(\R^n))
\cap L_p(J;W^{3-1/p}_p(\R^n))
\hookrightarrow \EE_4(a)$.
\end{itemize}
\end{lemma}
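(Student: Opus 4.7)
The plan is to prove each item by a standard anisotropic Sobolev-embedding argument, relying on three tools: (i) the parabolic trace theorem, which gives
\[
W^{s/2}_p(J;L_p(\R^m))\cap L_p(J;W^s_p(\R^m))\hookrightarrow C(J;W^{s-2/p}_p(\R^m))\qquad (s>2/p),
\]
together with its higher-order companion $H^1_p(J;X)\cap L_p(J;Y)\hookrightarrow C(J;(X,Y)_{1-1/p,p})$ whenever $Y\hookrightarrow X$; (ii) the classical Sobolev embedding $W^s_p(\R^m)\hookrightarrow BU\!C^k(\R^m)$ whenever $s-k>m/p$ and $s-k\notin\N\cup\{0\}$; and (iii) the mixed-derivative theorem, based on the commuting bounded $H^\infty$-calculi of $\partial_t$ and $D_n^{1/2}$ on $L_p(J;L_p(\R^n))$ recalled in Section~4. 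The hypothesis $p>n+3$ will enter as the sharp threshold making (b), (c), and the time-derivative part of (d) work.

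For (b), apply (i) with $s=2$, $m=n+1$ on each side of $\R^n$ to get $\EE_1(a)\hookrightarrow C(J;W^{2-2/p}_p(\dot\R^{n+1}))$, then (ii) with $k=1$ gives the $BU\!C^1$ embedding iff $(2-2/p)-1>(n+1)/p$, i.e.\ iff $p>n+3$; the jump condition $[\![u]\!]=0$ reglues the two halves across $\R^n$ into a continuous function on $\R^{n+1}$. For (c), (i) with $s=1-1/p$, $m=n$ gives $\EE_3(a)\hookrightarrow C(J;W^{1-3/p}_p(\R^n))$, and then (ii) yields $W^{1-3/p}_p\hookrightarrow BU\!C$ iff $p>n+3$. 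For (d), the third and fourth components of $\EE_4(a)$ fit the higher-order trace variant with $X=W^{2-1/p}_p(\R^n)$, $Y=W^{3-1/p}_p(\R^n)$ and produce $h\in C(J;W^{3-2/p}_p(\R^n))\hookrightarrow BC(J;BC^2(\R^n))$; for $\partial_th$, the first two components force $\partial_th\in\FF_4(a)$, to which (i) with $s=2-1/p$ applies, yielding $\partial_th\in C(J;W^{2-3/p}_p(\R^n))\hookrightarrow BC(J;BC^1(\R^n))$ precisely under $p>n+3$.

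For (a), the algebra property reduces via the Leibniz rule and H\"older's inequality to the embedding of the space into $L_\infty(J\times\R^n)$. For the anisotropic space $W^{s/2}_p(J;L_p(\R^n))\cap L_p(J;W^s_p(\R^n))$ this $L_\infty$-embedding holds exactly when $s>(n+2)/p$ (the $n+2$ reflecting the parabolic dimension); for $\FF_4(a)$ ($s=2-1/p$) this gives $p>(n+3)/2$, while for $\EE_3(a)$ ($s=1-1/p$) it yields the sharp $p>n+3$, precisely the standing hypothesis. The spatial factor of the $W^s_p$-norm of a product is then estimated by Leibniz plus H\"older against the $L_\infty$ bound, and the temporal Sobolev--Slobodeckij seminorm~\eqref{Slobodeskii} of $fg$ is handled via $f(t)g(t)-f(s)g(s)=(f(t)-f(s))g(t)+f(s)(g(t)-g(s))$ combined with the same $L_\infty$ bound.

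Finally, for (e) I would invoke the mixed-derivative theorem applied to $\partial_t$ and $D_n^{1/2}$: every $h\in W^{2-1/2p}_p(J;L_p(\R^n))\cap L_p(J;W^{3-1/p}_p(\R^n))$ belongs to $W^{(2-1/2p)(1-\theta)}_p(J;W^{(3-1/p)\theta}_p(\R^n))$ for each $\theta\in[0,1]$, and choosing $\theta=2/(3-1/p)$ brings the spatial order up to $2$; an elementary computation yields $(2-1/2p)(1-\theta)-(1/2-1/2p)=(p-1)/(2(3p-1))>0$, which supplies the missing inclusion $h\in W^{1/2-1/2p}_p(J;H^2_p(\R^n))$. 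The main step requiring care is the algebra property in (a) for $\EE_3(a)$: because the condition $s>(n+2)/p$ is met only barely under $p>n+3$, the Sobolev multiplier estimate at this endpoint is best carried out by first securing the $L_\infty$-embedding via (i) and (ii) and then decomposing the product through the Leibniz/H\"older scheme above, rather than attempting a direct norm estimate in $\EE_3(a)$.
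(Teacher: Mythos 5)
Your proposal is correct and follows essentially the same route as the paper: there, (b)--(d) are obtained from exactly these trace-space embeddings (\cite[Theorem III.4.10.2]{Am95} together with \cite{PSS07,EPS03}) combined with Sobolev's embedding theorem and the condition $p>n+3$, (a) is delegated to the multiplication-algebra lemma of \cite{PSS07}, and (e) is deduced from the mixed-derivative embedding \eqref{E}. The only cosmetic difference is in (e), where the paper interpolates the pair $H^1_p(J;W^{2-1/p}_p(\R^n))\cap L_p(J;W^{3-1/p}_p(\R^n))$ rather than your pair $W^{2-1/2p}_p(J;L_p(\R^n))\cap L_p(J;W^{3-1/p}_p(\R^n))$; both arguments rest on the same mixed-derivative/interpolation technique and produce the missing $W^{1/2-1/2p}_p(J;H^2_p(\R^n))$ component.
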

\begin{proof}
(a)
The assertion that $\EE_3(a)$ and $\FF_4(a)$ are multiplication algebras
can be shown as in the proof of \cite[Lemma 6.6(ii)]{PSS07}.
\smallskip\\
(b)
It follows from \cite[Theorem III.4.10.2]{Am95}
that $\EE_1(a)\hookrightarrow C(J;W^{2-2/p}_p(\dot\R^{n+1},\R^{n+1}))$
and this implies the first inclusion,
thanks to Sobolev's embedding theorem.
The second assertion follows from the fact 
that $u$ is continuous across $y=0$.
\smallskip\\
(c)
This follows from \cite[Remark 5.3(d)]{EPS03}
and Sobolev's embedding theorem.
\smallskip\\
(d) We infer from \cite[Theorem III.4.10.2]{Am95}
that 
$$H^1_p(J;W^{2-1/p}_p(\R^n))\cap L_p(J;W^{3-1/p}_p(\R^n))
\hookrightarrow C(J;W^{3-2/p}_p(\R^n)),
$$
and the inclusion $\EE_4(a)\hookrightarrow C(J;BC^2(\R^n))$
then follows from Sobolev's embedding theorem.
In addition, we conclude from \cite[Remark 5.3(d)]{PSS07} 
and Sobolev's embedding theorem that
$$W^{1-2/p}_p(J;L_p(\R^n))\cap L_p(J;W^{2-1/p}_p(\R^n))
\hookrightarrow BC(J;BC^1(\R^n))
$$
and this implies that
$\EE_4(a) \hookrightarrow BC^{1}(J;BC^1(\R^n)).$
\smallskip\\
(e) This follows from \eqref{E}.
\end{proof}
\noindent
Let
\begin{equation}
\label{K} N(u,\pi,q,h):=\big(F(u,\pi,h),F_d(u,h), G(u,q,h), H(u,h)\big)
\end{equation}
for $(u,\pi,q,h)\in\EE(a)$, where as before $u=(v,w)$, $F=(F_v,F_w)$ and
$G=(G_v,G_w)$. We show that the mapping $N$ is real analytic.
\goodbreak
\begin{proposition}
\label{pro:estimates-K} Suppose $p>n+3$. Then
\begin{equation}
\label{K-analytic}
N\in C^\omega(\EE(a)\,,\FF(a))\quad\text{and}\quad N(0)=0,\ DN(0)=0,
\end{equation}
where $DN$ denotes the Fr\'echet derivative of
$N$.
In addition we have
\begin{equation*}
DN(u,\pi,q,h)\in \Li({_0}\EE(a),{_0}\FF(a))
\quad\text{for any}\ (u,\pi,q,h)\in\EE(a).
\end{equation*}
\end{proposition}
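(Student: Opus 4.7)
The plan is to exhibit $N$ as a finite sum of terms of the form $\Phi(\nabla h)\cdot P(D^\alpha u, D^\alpha \pi, q, D^\beta h)$, where $\Phi$ is a scalar real-analytic function and $P$ is a polynomial in the indicated derivatives (of order at most two in $h$ and at most one in $u$ and $\pi$). Analyticity of $N$ will then follow from two standard facts: polynomial maps between Banach spaces are real analytic as soon as the underlying multilinear operations are bounded, and substitution into a scalar real-analytic function defined on a bounded set is real analytic as a map between appropriate multiplication algebras. To apply the second fact I would invoke Lemma~\ref{le:6.1}(d), which yields $\EE_4(a) \hookrightarrow BC(J; BC^1(\R^n))$; hence $\nabla h$ takes values in a bounded subset of $\R^n$, and the rational-in-$\nabla h$ coefficients appearing in $G_\kappa(h)$ are analytic on a neighborhood of that set.

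Next I would treat the four components of $N$ one at a time. For $F=(F_v,F_w)$ with target $\FF_1(a) = L_p(J; L_p(\R^{n+1}))$, every summand is a product of a factor bounded on $J \times \dot\R^{n+1}$ (namely $\nabla h$, $\partial_t h$, $v$, or $w$, via Lemma~\ref{le:6.1}(b),(d)) with a factor in $L_p(J; L_p)$ (namely $\nabla u$, $\nabla^2 u$, or $\partial_y \pi$), so the bound is immediate. For $F_d = (\nabla h | \partial_y v)$ with target $\FF_2(a)$, the decisive observation is that $h$ is independent of $y$, so $F_d = \partial_y(\nabla h \cdot v)$; since $[\![u]\!] = 0$, the product $\nabla h \cdot v$ extends to a function on all of $\R^{n+1}$, and bounded-times-$H^1_p(L_p)$ estimates place it in $H^1_p(J; L_p(\R^{n+1}))$, which gives $F_d \in H^1_p(J; \dot H^{-1}_p(\R^{n+1}))$ after applying the bounded map $\partial_y \colon L_p(\R^{n+1}) \to \dot H^{-1}_p(\R^{n+1})$; the $L_p(J; H^1_p(\dot\R^{n+1}))$-part follows directly by pointwise estimates. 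For $G=(G_v,G_w)$ with target $\FF_3(a)$, I would write each summand as a product of a trace (namely $[\![\mu\partial_y v]\!]$, $[\![\mu\nabla_x w]\!]$, $[\![\mu\partial_y w]\!]$, or $q=[\![\pi]\!]$) lying in $\EE_3(a)$ by trace theory applied to $\EE_1(a)$ and by the definition of $\EE(a)$, multiplied by a factor in $\EE_3(a)$ built from $\nabla h$, $\nabla^2 h$, and the analytic substitutions $\Phi(\nabla h)$ (all in $\FF_4(a)\hookrightarrow \EE_3(a)$ via Lemma~\ref{le:6.1}(a),(e)); the multiplication-algebra property of $\EE_3(a)$ then yields $G \in \FF_3(a)$. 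Finally, for $H = -(\gamma v | \nabla h)$ with target $\FF_4(a)$, the algebra property of $\FF_4(a)$, combined with $\gamma v \in \FF_4(a)$ from trace theory, gives $H \in \FF_4(a)$.

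The identities $N(0) = 0$ and $DN(0) = 0$ are immediate from the fact that every summand of $N$ is at least quadratic in the variables $(u, \pi, q, h)$ and their derivatives. The assertion $DN(u, \pi, q, h) \in \Li({_0}\EE(a), {_0}\FF(a))$ follows from the Leibniz rule: $DN(u,\pi,q,h)[\tilde u, \tilde \pi, \tilde q, \tilde h]$ is a finite sum of terms, each of which carries at least one factor drawn from the test direction $(\tilde u, \tilde \pi, \tilde q, \tilde h)$; hence if the latter vanishes at $t = 0$ in the sense appropriate to ${_0}\EE(a)$, then so does each term, in the sense appropriate to the corresponding component of $\FF(a)$.

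The main obstacle will be the $H^1_p(J; \dot H^{-1}_p(\R^{n+1}))$-estimate for $F_d$, since here neither the algebra property of $\EE_3(a)$ nor that of $\FF_4(a)$ is directly applicable; the rewriting $F_d = \partial_y(\nabla h \cdot v)$ in divergence form, which exploits both the $y$-independence of $h$ and the jump condition $[\![u]\!]=0$ built into $\EE_1(a)$, is essential. Once this step is in place, the remaining estimates reduce to routine verifications based on Lemma~\ref{le:6.1}.
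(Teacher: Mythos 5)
Your proposal is correct and follows essentially the same route as the paper: componentwise treatment of $F$, $F_d$, $G$, $H$ using the embeddings and multiplication-algebra properties of Lemma~\ref{le:6.1}, the trace estimates for the jump terms, the divergence-form rewriting $F_d=\partial_y(\nabla h\,|\,u)$ (exploiting $y$-independence of $h$ and $[\![u]\!]=0$) to reach $H^1_p(J;\dot H^{-1}_p)$, and the quadratic structure of $N$ together with the Leibniz rule for $N(0)=0$, $DN(0)=0$ and the ${_0}\EE(a)\to{_0}\FF(a)$ mapping property of $DN$. The only cosmetic difference is that you organize the analytic substitution in $G_\kappa(h)$ via the boundedness of $\nabla h$ a bit more explicitly than the paper does.
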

\begin{proof}
We first note that the
mapping $[(u,\pi,q,h)\mapsto N(u,\pi,q,h)]$ 
is polynomial. It thus suffices to
verify that $N:\EE(a)\to \FF(a)$ is well-defined and
continuous.
\smallskip\\
(i) We first consider the term $F(u,\pi,h)$,
and observe that it contains the expressions
$\nabla h,\Delta h$ and $\partial_t h$.
Without changing notation we here consider
the extension of $h$ 
from $\R^n$ to $\R^{n+1}$ defined by
$h(t,x,y)=h(t,x)$ for $t\in J$ and $(x,y)\in\R^n\times\R$.
With this interpretation we clearly have
\begin{equation}
\label{F-1}
\begin{split}
\|\partial  h\|_{\infty, J\times \R^{n+1}}
=\|\partial  h\|_{\infty, J\times \R^{n}},
\quad h\in\EE(a),
\ \partial\in\{\partial_j,\Delta,\partial_t\},
\end{split}
\end{equation}
where $\|\cdot\|_{\infty,U}$
denotes the sup-norm for the set $U\subset J\times \R^{n+1}$.
Next we note that
\begin{equation}
\label{F-2}
\begin{split}
& BC(J;BC(\R^{n+1}))\cdot L_p(J;L_p(\R^{n+1}))
\hookrightarrow L_p(J;L_p(\R^{n+1})),\\
& BC(J;BC(\R^{n+1}))\cdot BC(J;BC(\R^{n+1}))
\hookrightarrow BC(J;BC(\R^{n+1})),
\end{split}
\end{equation}
that is, multiplication is continuous and bilinear in the
indicated function spaces.
We can now conclude from \eqref{F-1}--\eqref{F-2} 
and Lemma~\ref{le:6.1} that 
\begin{equation*}
F\in C^\omega(\EE_1(a)\times\EE_2(a)\times\EE_4(a),\FF_1(a)),
\quad F(0)=0,\ DF(0)=0.
\end{equation*} 
(ii)
We will now  consider the nonlinear function
$F_d(u,h)=(\nabla h| \partial_y v).$
Since $h$ does not depend on $y$ we have
\begin{equation}
\label{Fd-1} F_d(u,h)=(\nabla h| \partial_yu)
=\partial_y (\nabla h | u).
\end{equation}
Observing that
\begin{equation*}
\label{Fd-3}
\begin{split}
&BC^1(J;BC(\R^{n+1}))\cdot H^1_p(J;L_p(\R^{n+1}))
\hookrightarrow H^1_p(J;L_p(\R^{n+1})), \\
&BC(J;BC^1(\dot\R^{n+1}))\cdot L_p(J;H^1_p(\dot\R^{n+1}))
\hookrightarrow L_p(J;H^1_p(\dot\R^{n+1})),
\end{split}
\end{equation*}
and
\begin{equation*}
\label{Fd-2}
\begin{split}
&\partial_y\in {\Li}
\big(H^1_p(J;L_p(\R^{n+1})),H^1_p(J;H^{-1}_p(\R^{n+1}))\big)\\
&\qquad\cap {\Li}
\big(L_p(J;H^1_p(\dot\R^{n+1})),L_p(J,L_p(\R^{n+1}))\big),
\end{split}
\end{equation*}
we infer from Lemma~\ref{le:6.1}(d) that
\begin{equation*}
F_d\in C^\omega(\EE_1(a)\times\EE_4(a),\FF_2(a)),
\quad F_d(0)=0,\quad DF_d(0)=0.
\end{equation*}
\noindent (iii) We remind that
\begin{equation}
\label{jump} [\![\mu \partial _i\;\cdot]\!]
\in{\Li}\big(H^1_p(J;L_p(\R^{n+1}))\cap
L_p(J;H^2_p(\dot\R^{n+1})),\EE_3(a)\big)
\end{equation}
where $[\![\mu \partial _i u]\!]$ denotes the jump of the quantity
$\mu\partial _i u$ with $u$ a generic function from
$\dot\R^{n+1}\to\R$, and where $\partial _i =\partial_{x _i} $ for
$i=1,\ldots,n$ and $\partial_{n+1} =\partial_y$.
\smallskip\\
The mapping $G(u,q,h)$ is made up of terms of the form
\begin{equation*}
[\![\mu \partial_i u_k]\!] \partial_j h, \quad [\![\mu\partial_i
u_k]\!] \partial_j h\partial_l h, \quad q\partial_j h, \quad
\Delta h\partial_j h, \quad G_\kappa(h), \quad
G_\kappa(h)\partial_jh
\end{equation*}
where $u_k$ denotes the $k$-th component of a function
$u\in\EE_1(a)$. From 
\eqref{jump} and the fact that $\EE_3(a)$ is a multiplication
algebra follows that the mappings
\begin{equation*}
\begin{split}
&(u,h)\mapsto [\![\mu \partial_i u_k]\!] \partial_j h,\
[\![\mu\partial_i u_k]\!] \partial_j h\partial_l h
:\EE_1(a)\times\EE_4(a)\to\EE_3(a), \\
& (q,h)\mapsto q\partial_jh :\EE_3(a)\times\EE_4(a)\to \EE_3(a),
\quad h\mapsto \Delta h\partial_jh:\EE_4(a)\to \EE_3(a)
\end{split}
\end{equation*}
are multilinear and continuous, and hence real analytic.
The fact that $\EE_3(a)$ is an algebra additionally implies that the
mapping $[h\mapsto G_\kappa(h)]:\EE_4(a)\to\EE_3(a)$ is analytic.
In summary we conclude that
\begin{equation*}
G\in C^\omega(\EE_1(a)\times\EE_3(a)\times\EE_4(a),\EE_3(a)),
\quad G(0)=0,\ DG(0)=0.
\end{equation*}
(iv)
We infer from
$
\gamma\in {\Li}\big(H^1_p(J;L_p(\R^{n+1}))\cap
L_p(J;H^2_p(\dot\R^{n+1})),\FF_a(a)\big)
$
and the fact that $\FF_4(a)$ is an algebra
that the mapping 
$[(u,h)\mapsto (\nabla h|\gamma u)]:\EE_1(a)\times\EE_4(a)\to \FF_4(a)$
is bilinear and continuous. This immediately yields
\begin{equation*}
H\in C^\omega(\EE_1(a)\times\EE_4(a),\FF_4(a)),
\quad H(0)=0,\ DH(0)=0.
\end{equation*}
(v)
As the terms of $N$ are made up of products
of $u,\pi,q,h$ and derivatives thereof, one easily verifies that
$$
DN(u,\pi,q,h)[\bar u,\bar \pi,\bar q,\bar h]\in {_0}\FF(a)
\ \text{whenever}\  
(u,\pi,q,h)\in \EE(a),\ (\bar u,\bar \pi,\bar q,\bar h)\in {_0}\EE(a).
$$
Combining the results obtained in steps (i)--(v)
yields the assertions of the proposition.
\end{proof}

We are now ready to prove our main result of this section,
yielding existence and uniqueness of solutions for the nonlinear
problem~\eqref{tfbns2}.
\begin{theorem}
\label{th:nonlinearII}
{\rm (}Existence of solutions for the
nonlinear problem \eqref{tfbns2}{\rm)}.
\begin{itemize}
\item[(a)] For every $t_0>0$ there exists a number
$\eps=\eps(t_0)>0$ such that for all initial values
$$(u_0,h_0)\in W^{2-2/p}_p(\dot\R^{n+1},\R^{n+1})
\times W^{3-2/p}_p(\R^n),
\quad [\![u_0]\!]=0,
$$
satisfying the compatibility conditions
\begin{equation}
\label{compatibility-II} 
[\![\mu\De(u_0,h_0)\nu_0-\mu(\nu_0|\De(u_0,h_0)\nu_0)\nu_0]\!]=0, 
\ \ {\rm div}\,u_0=F_d(u_0,h_0), 
\ \ [\![u_0]\!]=0
\end{equation}
and the smallness condition
\begin{equation}
\label{sm-II}
\|u_0\|_{W^{2-2/p}_p(\dot\R^{n+1})} 
+ \|h_0\|_{W^{3-2/p}_p(\R^n)}\le \eps,
\end{equation}
where $\De(u,h)$ is defined in \eqref{D},
the nonlinear problem \eqref{tfbns2}
admits a unique solution $(u,\pi,[\![\pi]\!],h)\in \EE(t_0)$.
\vspace{1mm}
\item[(b)] The solution has the additional regularity properties
\begin{equation*}
(u,\pi)\in C^\omega((0,t_0)\times\dot\R^{n+1},\R^{n+2}),\quad
[\![ \pi ]\!],h\in C^\omega((0,t_0)\times\R^n). \\
\end{equation*}
In particular,
${\mathcal M}=\bigcup_{t\in (0,t_0)}\big(\{t\}\times\Gamma(t)\big) \text{ is a real analytic manifold}$.
\end{itemize}
\end{theorem}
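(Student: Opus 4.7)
The plan is to prove part (a) by combining the linear isomorphism of \thmref{th:5.1} with the vanishing-derivative nonlinear estimates of \propref{pro:estimates-K} via a Banach fixed-point argument, and to prove part (b) by the Angenent--Masuda parameter trick: introducing an analytic action on space-time by translations in $x$ and dilations in $t$ and re-applying the implicit function theorem to obtain analytic dependence of the solution on the parameters, which transfers to joint analyticity in $(t,x)$ on $\mathcal{O}$. For (a), \thmref{th:5.1} delivers an isomorphism
\[
L:\EE(t_0)\longrightarrow \FF(t_0)\times W^{2-2/p}_p(\dot{\R}^{n+1},\R^{n+1})\times W^{3-2/p}_p(\R^n)
\]
onto the subspace of data satisfying the linear compatibility conditions (d)--(g) there. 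I recast \eqref{tfbns2} as $Lz=(N(z),u_0,h_0)$ with $N$ as in \eqref{K}, noting that the nonlinear compatibility \eqref{compatibility-II} is exactly the statement that the right-hand side lies in the range of $L$ at $t=0$. Choosing a reference $z^\ast\in\EE(t_0)$ lifting the initial data with $\|z^\ast\|_{\EE(t_0)}\le C\eps$, writing $z=z^\ast+y$ reduces the problem to
\[
L_0\, y = N(z^\ast+y)-\bigl(Lz^\ast-(0,0,0,0,u_0,h_0)\bigr)\quad\text{in }{_0}\FF(t_0),
\]
where $L_0:{_0}\EE(t_0)\to{_0}\FF(t_0)$ is the restriction of $L$ to functions vanishing at $t=0$, still an isomorphism. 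Since \propref{pro:estimates-K} gives $N(0)=0$ and $DN(0)=0$, the operator norm of $DN$ on a small ball $B_r(0)\subset{_0}\EE(t_0)$ is $\le C(\eps+r)$, and the right-hand side becomes a strict contraction on $B_r(0)$ once $r$ and $\eps$ are sufficiently small. The contraction mapping principle then yields a unique $y$, hence a unique $(u,\pi,[\![\pi]\!],h)\in\EE(t_0)$.

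For part (b) I would follow the strategy of \cite{Ang90a,Ma80,ES96,ES03,EPS03b}. Pick a smooth cutoff $\chi\in C^\infty_c(0,t_0)$ that equals $1$ on a subinterval containing the target time, and for $(\lambda,\xi)$ in a small ball $B\subset\R\times\R^n$ define the diffeomorphism
\[
\Theta_{\lambda,\xi}(t,x,y):=\bigl(t+\lambda\chi(t),\,x+\xi\chi(t),\,y\bigr)
\]
of $[0,t_0]\times\R^{n+1}$ onto itself. Crucially, $\Theta_{\lambda,\xi}$ fixes the flat interface $\{y=0\}$ and is the identity at $t=0$, so the pull-back $z_{\lambda,\xi}:=z\circ\Theta_{\lambda,\xi}$ stays in $\EE(t_0)$ with unchanged initial data, and the transformed equation reads $\Phi(\lambda,\xi,z_{\lambda,\xi})=(u_0,h_0)$ with $\Phi$ depending polynomially, hence real analytically, on $(\lambda,\xi)$. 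At $(\lambda,\xi)=0$ the Fr\'echet derivative $D_z\Phi(0,0,z)=L-DN(z)$ is a small perturbation of the isomorphism $L$ (by \propref{pro:estimates-K} and smallness of $z$), so the analytic implicit function theorem produces a unique analytic map $(\lambda,\xi)\mapsto\tilde z(\lambda,\xi)\in\EE(t_0)$ with $\tilde z(0,0)=z$. The uniqueness in part (a) forces $\tilde z(\lambda,\xi)=z\circ\Theta_{\lambda,\xi}$; evaluating at a fixed interior point $(t_\ast,x_\ast,y_\ast)\in\mathcal{O}$ and varying $(\lambda,\xi)\in B$ exhibits $z$ as a real analytic function of $(t,x,y)$ in a neighborhood of that point. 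In particular $h$ is analytic, which immediately promotes $\mathcal{M}=\bigcup_{t\in(0,t_0)}\{t\}\times\Gamma(t)$ to a real analytic manifold.

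The main obstacle lies in the compatibility matching for (a): the reference lift $z^\ast$ only fulfills the linear compatibility of \thmref{th:5.1}, so one must verify that $Lz^\ast-(0,0,0,0,u_0,h_0)+N(z^\ast+y)$ has vanishing time-trace and therefore lies in ${_0}\FF(t_0)$ as required. Tracking the definitions of $F_d$, $G_v$, $G_w$, $H$ in \eqref{2.2}, \eqref{2.7}, \eqref{H}, one sees that this identification is exactly the content of \eqref{compatibility-II} together with $DN(0)=0$, which reduces the defect to an $O(\eps^2)$ term absorbed by the smallness \eqref{sm-II}. A parallel delicacy in (b) is the time-localization of $\chi$: the dilation must be supported away from $t=0$ to preserve the initial data and must be implemented tangentially to the flat interface $\{y=0\}$, which is why $\Theta_{\lambda,\xi}$ acts only in $(t,x)$ and through the cutoff, not as a global rescaling.
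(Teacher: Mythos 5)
Your overall architecture (lift of the data plus contraction for (a), Angenent--Masuda parameter trick plus implicit function theorem for (b)) is the same as the paper's, but both halves have a genuine gap. In (a), the way you dispose of the compatibility matching is not correct: membership of the right-hand side in ${_0}\FF(t_0)$ is a qualitative condition (the time traces of the $f_d$-, $g$- and $g_h$-components must vanish identically), so an ``$O(\eps^2)$ defect absorbed by smallness'' is not an available mechanism --- if the trace of $N(z^\ast+y)-Lz^\ast$ at $t=0$ is nonzero, however small, $L_0^{-1}$ simply does not apply and the fixed point scheme breaks. What is actually needed, and what the paper does, is to construct $z^\ast$ as the solution of the linear problem with specially built inhomogeneities $(0,f^\ast_d,g^\ast,g^\ast_h)$ whose traces at $t=0$ are exactly $F_d(u_0,h_0)$, $G(u_0,[\![\pi_0]\!],h_0)$ and $H(u_0,h_0)$, where $[\![\pi_0]\!]$ is defined from the first condition in \eqref{compatibility-II}; in particular $f^\ast_d$ must be realized as $\partial_y c^\ast$ of a parabolic extension of $(v_0|\nabla h_0)$ in order to have the $H^1_p(J;\dot H^{-1}_p)\cap L_p(J;H^1_p)$ regularity required by $\FF_2$, and \eqref{compatibility-II} enters to verify that these data satisfy the compatibility hypotheses of \thmref{th:5.1}. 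An unspecified ``reference lift with $\|z^\ast\|\le C\eps$'' does not deliver this.

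In (b) the gap is more serious: your deformation $\Theta_{\lambda,\xi}(t,x,y)=(t+\lambda\chi(t),\,x+\xi\chi(t),\,y)$ never moves the normal variable, so evaluating the analytic family at a fixed point can only give analyticity of $h$ and $[\![\pi]\!]$, and of $u$ jointly in $(t,x)$ for fixed $y$; it cannot produce $(u,\pi)\in C^\omega((0,t_0)\times\dot\R^{n+1},\R^{n+2})$, which is part of the assertion (the solution class only gives $H^2_p$ in $y$). The paper resolves exactly the obstruction you cite (the deformation must respect the interface) by a third parameter $\tau$ acting through $y\mapsto y(1+\varphi(y)\tau t)$, which fixes both $\{t=0\}$ and $\{y=0\}$ yet moves interior points vertically, and this is indispensable for analyticity in $y$; it also forces the transformed problem to carry $\tau$-dependent coefficients in the divergence and jump conditions, which must be checked against \thmref{th:5.1}. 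Finally, $\pi$ itself has no continuous point evaluation in $\EE_2(a)=L_p(J;\dot H^1_p)$, so its analyticity cannot be read off from the parameter family directly; it has to be recovered afterwards from the analyticity of $u$, $[\![\pi]\!]$, $h$ via the momentum equation (analyticity of $\nabla\pi$) together with a normalization of $\pi$, as in the paper's step (vi).
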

\begin{proof}
In order to  economize our notation we set $z:=(u,\pi,q,h)$ for
$(u,\pi,q,h)\in\EE(a)$. With this notation, the nonlinear
problem~\eqref{tfbns2} can be restated as
\begin{equation}
\label{FP-1} Lz=N(z),\quad (u(0),h(0))=(u_0,h_0),
\end{equation}
where $L$ denotes the linear operator on the left-hand side of
~\eqref{tfbns2}, and where $N$ is defined in \eqref{K}.

It is convenient to first introduce an auxiliary function
$z^\ast\in\EE(a)$ which resolves
the compatibility conditions
\eqref{compatibility-II} and the initial conditions in
\eqref{FP-1}, and then to solve the resulting reduced
problem
\begin{equation}
\label{FP-2} 
Lz=N(z+z^\ast)-Lz^\ast=:K_{0}(z), \quad z\in {_0}\EE(a),
\end{equation}
by means of a fixed point argument.
\smallskip\\
(i) Suppose that the initial values $(u_0,h_0)$ satisfy the
(first) compatibility condition in \eqref{compatibility-II}, and set
\begin{equation*}
[\![\pi_0]\!]:=[\![\mu(\nu_0 | \De(u_0,h_0)\nu_0)]\!]
+\sigma(\Delta h_0-G_\kappa(h_0)).
\end{equation*}
It is then clear that the following compatibility conditions hold:
\begin{equation}
\label{comp-2}
\begin{aligned}
-[\![\mu\partial_y v_0]\!] -[\![\mu\nabla_{x}w_0]\!]
 &=G_v(u_0,[\![\pi_0]\!],h_0)
    &\ \hbox{on}\quad &\R^n\\
-2[\![\mu\partial_y w_0]\!] +[\![\pi_0]\!] -\sigma\Delta h_0 &=
G_w(u_0,h_0)
    &\ \hbox{on}\quad &\R^n\\
\end{aligned}
\end{equation}
where $u_0=(v_0,w_0)$. Next we introduce special
functions $(0,f^\ast_d,g^\ast,g^\ast_h)\in\FF(a)$ which resolve
the necessary compatibility conditions. First we set
\begin{equation}
\label{c-star} c^\ast(t):= \left\{
\begin{aligned}
&{\mathcal R}_{+} e^{-t D_{n+1}}\mathcal{E}_{+}(v_0|\nabla
h_0)
\quad\text{in}\quad \R^{n+1}_{+}, \\
&{\mathcal R}_{-} e^{-t D_{n+1}}\mathcal{E}_{-}(v_0|\nabla h_0)
\quad\text{in}\quad \R^{n+1}_{-},
\end{aligned}
\right.
\end{equation}
where ${\mathcal E}_{\pm}\in
\Li(W^{2-2/p}_p(\R^{n+1}_{\pm}),W^{2-2/p}_p(\R^{n+1}))$ is an
appropriate extension operator and ${\mathcal R}_{\pm}$ is the
restriction operator. Due to $(v_0|\nabla h_0)\in
W^{2-2/p}_p(\dot\R^{n+1})$ we obtain
\begin{equation*}
c^\ast\in H^1_p(J;L_p(\R^{n+1}))\cap
L_p(J;H^2_p(\dot\R^{n+1})).
\end{equation*}
Consequently,
\begin{equation}
\label{divergence-star}
 f^\ast_d:=\partial_y\, c^\ast\in \FF_2(a)
\ \text{ and }\  f^\ast_d(0)=F_d(v_0,h_0).
\end{equation}
Next we set
\begin{equation}
\label{star}
\begin{split}
g^\ast(t):=e^{-D_nt} G(u_0,[\![\pi_0]\!],h_0), \quad
g^\ast_h(t):=e^{-D_nt}H(u_0,h_0).
\end{split}
\end{equation}
It then follows from
\eqref{divergence-star} and \cite[Lemma 8.2]{EPS03} that
$(0,f^\ast_d,g^\ast,g^\ast_h)\in\FF(a)$. \eqref{comp-2} and the
second and third condition in \eqref{compatibility-II} show
that the necessary compatibility conditions of
Theorem~\ref{th:5.1}
are satisfied and we can conclude that the
linear problem
\begin{equation}
\label{FP-3} Lz^\ast=(0,f^\ast_d,g^\ast,g^\ast_h), \quad
(u^\ast(0),h^\ast(0))=(u_0,h_0),
\end{equation}
has a unique solution $z^\ast\in\EE(a)$.
 With the auxiliary function $z^\ast$ now determined, we can
focus on the reduced equation \eqref{FP-2}, which can be converted
into the fixed point equation
\begin{equation}
\label{FP-4} z=L_{0}^{-1}K_{0}(z),\quad z\in{_0}\EE(a),
\end{equation}
where $L_0$ denotes the restriction of $L$ to ${_0}\EE(a)$.
 Due to the choice of
$(f^\ast_d,g^\ast,g^\ast_h)$ we have $K_{0}(z)\in {_0}\FF(a)$ for
any $z\in{_0}\EE(a)$, and it follows from
Proposition~\ref{pro:estimates-K} that
\begin{equation*}
\label{FP-6} K_{0}\in C^\omega({_0}\EE(a),{_0}\FF(a)).
\end{equation*}
Consequently, $L_{0}^{-1}K_{0}:{_0}\EE(a)\to {_0}\EE(a)$ is well defined
and smooth.
\medskip\\
(ii) 
In the following, $t_0>0$ is a fixed number.
We set
\begin{equation*}
\label{E_1}
E_1:=\{(u_0,h_0)\in W^{2-2/p}_{p}(\dot\R^{n+1},\R^{n+1})\times W^{3-2/p}_p(\R^n):
[\![u]\!]=0\},
\end{equation*}
and observe that $E_1$ is a Banach space.
Given $(u_0,h_0)\in E_1$ let $(f^\ast_d, g^\ast,g^\ast_h)$ be defined as in
 \eqref{divergence-star}--\eqref{star}.
It is not difficult to see that the mapping
\begin{equation*}
F^\ast: E_1\to \FF(t_0),
\quad
F^\ast(u_0,h_0):=(0,f^\ast_d, g^\ast,g^\ast_h),
\end{equation*}
is $C^1$ (in fact real analytic), and that
$F^\ast(0)=0$ and $DF^\ast(0)=0$.
Hence given $\delta\in (0,1)$ there exists $\eps=\eps(\delta)>0$ such that
\begin{equation}
\label{FF^ast-estimate}
\|F^\ast(u_0,h_0)\|_{\FF(t_0)}\le \delta\|(u_0,h_0)\|_{E_1},
\quad (u_0,h_0)\in \eps \overline{\BB}_{E_1}.
\end{equation}
Let  $\mathbb G(t_0)$ denote the closed subspace
of $\FF(t_0)\times E_1$ consisting of all functions
$(f,f_d,g,g_h,u_0,h_0)$ satisfying the compatibility
conditions of Theorem~\ref{th:5.1}.

Suppose that $(u_0,h_0)\in\eps\overline{\BB}_{E_1}$ satisfies the compatibility
conditions~\eqref{compatibility-II}.
Then, due to \eqref{comp-2} and the definition of
$F^\ast$, the mapping
\begin{equation*}
G^\ast:E_1 \to \GG(t_0),
\quad G^\ast(u_0,h_0):=(F^\ast(u_0,h_0),u_0,h_0),\\
\end{equation*}
is well-defined and $\|G^\ast(u_0,h_0)\|_{\GG(t_0)}
\le 2\|(u_0,h_0)\|_{E_1}$.
It then follows from Theorem~\ref{th:5.1} that
\eqref{FP-3}
has a unique solution $z^\ast=z^\ast(u_0,h_0)$ which satisfies
\begin{equation}
\label{z^ast-uniform}
\|z^\ast\|_{\EE(t_0)}\le C_0\|(u_0,h_0)\|_{E_1},
\quad (u_0,h_0)\in\eps\overline{\BB}_{E_1}\,,
\end{equation}
where the constant $C_0$ does not depend on  $(u_0,h_0)$.
\medskip\\
(iii) Theorem~\ref{th:5.1} also implies that 
$L_{0}: {_0}\EE(t_0)\to {_0}\FF(t_0)$
is an isomorphism. Let then
\begin{equation}
\label{FP-5-zero}
M:=\|L_{0}^{-1}\|_{\Li({_0}\FF(t_0),{_0}\EE(t_0))}.
\end{equation}
We can assume that the number $\delta$ in step (ii)
was already chosen sufficiently small such that
\begin{equation}
\label{choice-delta}
\delta<\min\big(1,\frac{1}{M(2+C_0)}\big).
\end{equation}
\\
(iv)
We shall show that the fixed point equation~\eqref{FP-4}
has for each initial value $(u_0,h_0)$
satisfying \eqref{compatibility-II}--\eqref{sm-II}
a unique fixed point
$\hat z=\hat z(u_0,h_0)\in \eps\overline{\BB}_{{_0}\EE(t_0)}$.
It follows from Proposition~\ref{pro:estimates-K} 
and \eqref{z^ast-uniform} that
\begin{equation}
\label{DN-estimate}
\|DN(z+z^\ast)\|_{{\Li}(\EE(t_0),\FF(t_0))}, \
\|DK_0(z)\|_{{\Li}({_0}\EE(t_0),{_0}\FF(t_0))}\le \delta
\end{equation}
for all $(u_0,h_0)$ satisfying \eqref{compatibility-II}--\eqref{sm-II}
and all $z\in \eps\overline{\BB}_{{_0}\EE(t_0)}$,
provided $\eps$ is chosen small enough.
From \eqref{FF^ast-estimate}--\eqref{DN-estimate} follows for $z, z_j\in\eps\overline{\BB}_{{_0}\EE(t_0)}$
\begin{equation*}
\begin{split}
\|L_0^{-1}(K_0(z_1)-K_0(z_2))\|_{{_0}\EE(t_0)}
\le M\delta \|z_1-z_2\|_{{_0}\EE(t_0)} 
\le (1/2)\|z_1-z_2\|_{{_0}\EE(t_0)}
\end{split}
\end{equation*}
and
\begin{equation*}
\begin{split}
\|L_0^{-1}K_0(z)\|_{{_0}\EE(t_0)}
&\le M\big(\|N(z+z^\ast)\|_{\FF(t_0)}+\|F^\ast(u_0,h_0)\|_{\FF(t_0)}\big)
\\
&\le M\delta(2+C_0)\eps\le \eps.
\end{split}
\end{equation*}
This shows that the mapping
$
L_{0}^{-1}K_{0}: \eps\overline{\BB}_{{_0}\EE(t_0)} \to
\eps\overline{\BB}_{{_0}\EE(t_0)}
$
is a contraction for any initial value
$(u_0,h_0)$ satisfying \eqref{compatibility-II}--\eqref{sm-II}.
\smallskip\\
(v)  By the contraction mapping principle $L_{0}^{-1}K_{0}$
has a unique fixed point $\hat z \in
\eps\overline{\BB}_{{_0}\EE(t_0)} \subset {_0}\EE(t_0) $ and it
follows from \eqref{FP-1}--\eqref{FP-2} that
$\hat z+z^*$ is the (unique) solution of the nonlinear
problem~\eqref{tfbns2} in
$\EE(t_0)$, proving
the assertion in part (a) of the Theorem.
\smallskip\\
(vi) In order to show that $(u,\pi,q,h)$ is analytic in space and
time we can use the same strategy as in \cite[Section~8]{EPS03}.
Since the proof is  similar  we
will refrain from giving all the details, and will rather point
out the underlying ideas.
\smallskip\\
Let $(u,\pi,q,h)\in \EE(t_0)$ be the solution of \eqref{tfbns2}
with initial value $(u_0,h_0)$. Let $a\in (0,t_0)$ be fixed and
choose $\delta>0$ so that $(1+\delta)a\le t_0$. Moreover, let
$\varphi$ be a smooth cut-off function with $\varphi\equiv1$ on
$[-R,R]$ for some $R>0$
and suppose that $\delta>0$ is chosen small enough so that
$$
1+\varphi(y)\tau t>0,\quad 1+(y\varphi(y))^\prime\tau t>0, \quad
t\in [0,a],\; \tau\in (-\delta,\delta),\; y\in\R.
$$
For given parameters $(\lambda,\nu,\tau)\in
(1-\delta,1+\delta)\times\R^n\times (-\delta,\delta)$ we set
\begin{equation}
\label{R-1}
\begin{aligned}
(u_{\lambda,\nu,\tau},\pi_{\lambda,\nu,\tau})(t,x,y):&=
(u,\pi)(\lambda t,x+t\nu,y(1+\varphi(y)\tau t)),
 \\
(q_{\lambda,\nu},h_{\lambda,\nu})(t,x):&= (q,h)(\lambda t,x+t\nu),
 \\
z_{\lambda,\nu,\tau}:&=(u_{\lambda,\nu,\tau},\pi_{\lambda,\nu,\tau},
q_{\lambda,\nu},h_{\lambda,\nu})
\end{aligned}
\end{equation}
where $(t,x,y)\in [0,a]\times\R^n\times\dot\R$.
Suppose we know that
\begin{equation}
\label{R-2} [(\lambda,\nu,\tau)\mapsto z_{\lambda,\nu,\tau}] \in
C^{\omega}(\Lambda,\EE(a))
\end{equation}
with $\Lambda\subset (1-\delta,1+\delta)\times\R^n\times
(-\delta,\delta)$ a neighborhood of $(\lambda,\nu,\tau)=(1,0,0)$.
Pick $(s_0,x_0,y_0)\in (0,t_0)\times\dot\R^{n+1}$ and choose $a\in
(s_0,t_0)$. Without loss of generality we can assume that $y_0\in
[-R,R]$. Thanks to the embeddings
$$
\EE_1(a)\hookrightarrow C(I,BC(\R^{n+1},\R^{n+1})),\quad
\EE_{3}(a),\EE_4(a)\hookrightarrow C(I;BC(\R^n)),
$$
see Lemma~\ref{le:6.1},
we conclude that
\begin{equation*}
\begin{split}
&\left[(\lambda,\nu,\tau)\mapsto u_{\lambda,\nu,\tau}\right]\in
C^\omega(\Lambda,C(I;BC(\R^{n+1},\R^{n+1})),\\
&\left[(\lambda,\nu)\mapsto
(q_{_\lambda,\nu},h_{\lambda,\nu})\right] \in C^\omega(\Lambda,
C(I;BC(\R^n)\times C(I;BC(\R^n))
\end{split}
\end{equation*}
for $I=[0,a]$. Thus
\begin{equation*}
\begin{split}
&\left[(\lambda,\nu,\tau)
\mapsto u(\lambda s_0, x_0+s_0\nu,y_0(1+\tau s_0)\right]
\in C^\omega(\Lambda,\R^{n+1}), \\
&\left[(\lambda,\nu)\mapsto (q,h)(\lambda s_0,x_0+s_0\nu)\right]
\in C^\omega(\Lambda,\R^2),
\end{split}
\end{equation*}
and this implies that
\begin{equation}
\label{R-3} u\in C^\omega ((0,t_0)\times \dot\R^{n+1},\R^{n+1}),\quad q,h\in
C^\omega((0,t_0)\times\R^n).
\end{equation}
This in turn with \eqref{2.1}--\eqref{2.2} shows that
$\nabla\pi\in C^\omega((0,t_0)\times\dot\R^{n+1},\R^{n+1})$ as well, and we can
now conclude that
\begin{equation}
\label{pi-analytic} \pi\in C^\omega((0,t_0)\times \dot\R^{n+1}),
\end{equation}
where the pressure $\pi$ is normalized by $\pi(t,0,0-)\equiv0$, i.e.
$$
\pi(t,x,y)=\left\{\begin{array}{ll}
q(t,0)+\int_0^1[(\nabla_x\pi(t,sx,sy)|x)+\partial_y\pi(t,sx,sy) y ]ds,&y>0,\\
\int_0^1[(\nabla_x\pi(t,sx,sy)|x)+\partial_y\pi(t,sx,sy) y ]ds,&y<0.
\end{array}\right.
$$
\smallskip\\
(vii) We will now explain the steps needed to establish the
crucial property~\eqref{R-2}. First we note that there exists a
neighborhood $\Lambda \subset
(1-\delta,1+\delta)\times\R^n\times(-\delta,\delta)$ of $(1,0,0)$
such that
\begin{equation}
\label{R-4} [(\lambda,\nu,\tau)\mapsto
(0,f^\ast_{d,\lambda,\nu,\tau},g^\ast_{\lambda,\nu},
g^\ast_{h,\lambda,\nu})] \in C^\omega(\Lambda,\FF(a))
\end{equation}
where the functions $(f_d^\ast,g^\ast,g^\ast_h)$ are defined in
\eqref{divergence-star}--\eqref{star}. In fact, the assertion
follows immediately from \cite[Lemma 8.2]{EPS03} for the functions
$(g^\ast,g^\ast_h)$. Let us then consider the function $c^\ast$
defined in \eqref{c-star}. Let $w(t):=e^{-tD_{n+1}}w_0$ for some
function $w_0\in W^{2-2/p}_p(\R^{n+1})$ and define
$w_{\lambda,\nu,\tau}(t,x,y)$ for $(t,x,y)\in I\times\R^{n+1}$ as
above, with $I=[0,a]$. Then one verifies as in the proof of
\cite[Lemma 8.2]{EPS03} that
$$
w_{\lambda,\nu,\tau}\in  H^1_p(I;L_p(\R^{n+1})) \cap L_p(I;
H^2_p(\R^{n+1}))=:{\mathbb X}_1(I)
$$
for $(\lambda,\nu,\tau)\in (1-\delta,1+\delta)\times\R^n\times
(-\delta,\delta)$, and that $w_{\lambda,\nu,\tau}$ solves the
parameter-dependent parabolic equation
\begin{equation*}
\label{R5}
\partial_tu -{\mathcal A}_{\lambda,\nu,\tau}u=0,
\quad u(0)=w_0,
\end{equation*}
in $\R^{n+1}$, where ${\mathcal A}_{\lambda,\nu,\tau}$ is a
parameter-dependent differential operator given by
\begin{equation*}
\begin{split}
{\mathcal A}_{\lambda,\nu,\tau}=\lambda\Delta_x
+\frac{\lambda}{(1+\alpha^\prime(y)\tau t)^2}\partial^2_y
+\tau\left(\frac{\alpha(y)}{1+\alpha^\prime(y)\tau t}
-\frac{\lambda\alpha^{\prime\prime}(y)t}{(1+\alpha^\prime(y)\tau
t)^3}\right)\partial_y+(\nu |\nabla_x )
\end{split}
\end{equation*}
for $t\in [0,a]$ and $y\in\dot\R$, where
$\alpha(y):=y\varphi(y)$. Here we observe that
\begin{equation*}
{\mathcal A}_{1,0,0}=\Delta,\quad \left[(\lambda,\nu,\tau)\mapsto
{\mathcal A}_{\lambda,\nu,\tau}\right] \in C^\omega(\Lambda,\Li({\mathbb X}_1(I),{\mathbb X}_0(I)),
\end{equation*}
with ${\mathbb X}_0(I):=L_p(I,L_p(\R^{n+1})$. As in the proof of
\cite[Lemma 8.2]{EPS03} it follows from the implicit function
theorem that there exists a neighborhood
$\Lambda\subset(1-\delta,1+\delta)\times\R^n\times
(-\delta,\delta)$ of $(1,0,0)$ such that
\begin{equation}
\label{R6} [(\lambda,\nu,\tau)\mapsto w_{\lambda,\nu,\tau}] \in
C^\omega(\Lambda,{\mathbb X}_1(I)).
\end{equation}
Applying \eqref{R6} separately to $w_0={\mathcal E}_{\pm}(v_0\nabla
h_0)$, an then applying ${\mathcal R}_{\pm}$ yields
\begin{equation*}
[(\lambda,\nu,\tau)\mapsto c^\ast_{\lambda,\nu,\tau}] \in
C^\omega(\Lambda, H^1_p(I;L_p(\R^{n+1})) \cap L_p(I;
H^2_p(\dot\R^{n+1})).
\end{equation*}
It then follows from the definition of $f^\ast_d$ that $
[(\lambda,\nu,\tau)\mapsto f^\ast_{d,\lambda,\nu,\tau}] \in
C^\omega(\Lambda,\FF_2(a)). $ In a next step one verifies that the
function $z^\ast_{\lambda,\nu,\tau}$ solves the linear
parameter-dependent problem
\begin{equation}
\label{linFB-paramter} \left\{
\begin{aligned}
\rho\partial_tu -{\mathcal A}_{\lambda,\nu,\tau}u +{\mathcal B}_{\lambda,\tau}\pi 
&=0    &\ \hbox{in}\quad &\dot\R^{n+1}\\
{\mathcal C}_{\tau}u&= f^\ast_{d,\lambda,\nu,\tau}
 &\ \hbox{in}\quad &\dot\R^{n+1}\\
-\frac{1}{1+\tau t}[\![\mu\partial_y v]\!] -[\![\mu\nabla_{x}w]\!]
&=g^\ast_{v,\lambda,\nu}
    &\ \hbox{on}\quad &\R^n\\
-\frac{2}{1+\tau t}[\![\mu\partial_y w]\!] +[\![\pi]\!]
-\sigma\Delta h &=g^\ast_{w,\lambda,\nu}
    &\ \hbox{on}\quad &\R^n\\
[\![u]\!] &=0 &\ \hbox{on}\quad &\R^n\\
\partial_th-\lambda\gamma w + {\mathcal D}_{\nu}h
&=\lambda g^\ast_{h,\lambda,\nu}  &\ \hbox{on}\quad &\R^n\\
u(0)=u_0,\; h(0)&=h_0 &\\
 \end{aligned}
\right.
\end{equation}
where
\begin{equation*}
\begin{split}
&{\mathcal A}_{\lambda,\nu,\tau}:=\lambda\mu\Delta_x
\!+\!\frac{\lambda\mu}{(1+\alpha^\prime(y)\tau t)^2}\partial^2_y
+\tau\big(\frac{\rho\alpha(y)}{1+\alpha^\prime(y)\tau t}
\!-\!\frac{\lambda\mu\alpha^{\prime\prime}(y)t}{(1+\alpha^\prime(y)\tau t)^3}\big)\partial_y+\rho (\nu|\nabla_x), \\
&{\mathcal B}_{\lambda,\tau}\pi:= \lambda (\nabla_x
\pi,\frac{1}{1+\alpha^\prime(y)\tau t}\partial_y\pi), \qquad
{\mathcal C}_\tau u:={\rm div}_x v
+\frac{1}{1+\alpha^\prime(y)\tau t}\partial_y w, \\
&{\mathcal D}_{\nu}h:=-(\nu|\nabla h).
\end{split}
\end{equation*}
We note that
\begin{equation*}
\mathcal A_{1,0,0}=\mu\Delta, \quad \mathcal B_{1,0}=\nabla,\quad
\mathcal C_1={\rm div},\quad \mathcal D_{0}=0.
\end{equation*}
It is easy to see that the differential operators $\mathcal
A_{\lambda,\nu,\tau}$, $\mathcal B_{\lambda,\tau}$, 
$\mathcal C_\tau$ and $\mathcal D_\nu$ 
depend analytically on the parameters $(\lambda,\nu,\tau)$
in the appropriate function spaces.
Using Thereom~\ref{th:5.1} and the implicit
function theorem one shows similarly as in \cite[Lemma 8.3]{EPS03}
that there is a neighborhood $\Lambda\subset
(1-\delta,1+\delta)\times\R^n\times (-\delta,\delta)$ of $(1,0,0)$ such that
\begin{equation}
\label{R-7} [(\lambda,\nu,\tau)\mapsto z^\ast_{\lambda,\nu,\tau}]
\in C^\omega(\Lambda,\EE(a)).
\end{equation}
Let $\hat z$ be the solution of  \eqref{FP-2} obtained in step (v)
above. Then one verifies that $\hat
z_{\lambda,\nu,\tau}\in 2\eps\BB_{{_0}\EE(t_0)}$ for
$(\lambda,\nu,\tau)\in\Lambda$, with $\Lambda$ a sufficiently
small neighborhood of $(1,0,0)$. 
Moreover, $\hat z_{\lambda,\nu,\tau}$ 
solves the nonlinear parameter-dependent problem
\begin{equation}
\label{L-lambda}
L_{\lambda,\nu,\tau} z=K_{\lambda,\nu,\tau}(z), \quad z\in
{_0}\EE(a),
\end{equation}
for $(\lambda,\nu,\tau)\in\Lambda$, where $L_{\lambda,\nu,\tau} z$
is defined by the left-hand side of~\eqref{linFB-paramter}
and where
\begin{equation}
\label{K-lambda}
K_{\lambda,\nu,\tau}(z):= \left(
\begin{array}{rll}
\ab \lambda F_\tau(u+u^\ast_{\lambda,\nu,\tau}\,,
\pi+\pi^\ast_{\lambda,\nu,\tau}\,, h+h^\ast_{\lambda,\nu})
\vspace{1mm}
\\
\ab F_{d,\tau}(u+u^\ast_{\lambda,\nu,\tau}\,,h+h^\ast_{\lambda,\nu})
-f^\ast_{d,\lambda,\nu,\tau} \vspace{1mm}
\\
\ab G_\tau(u+u^\ast_{\lambda,\nu,\tau}\,, q+q^\ast_{\lambda,\nu}\,,
h+h^\ast_{\lambda,\nu}) -g^\ast_{\lambda,\nu} \vspace{1mm}
\\
\ab \lambda H(u+u^\ast_{\lambda,\nu,\tau}\,, h+h^\ast_{\lambda,\nu})
-g^\ast_{h,\lambda,\nu} \vspace{1mm}
\\
\end{array}
\right).
\end{equation}
The functions $F_{\tau}$,  $F_{d,\tau}$ and $G_{\tau}$ 
are obtained from $F$, $F_d$ and $G$, respectively,
by replacing terms containing partial derivatives
$\partial_y$ and $\partial^2_y$ in the following way:
\begin{equation*}
\partial_y \omega  \mapsto \frac{1}{1+\alpha^\prime(y)\tau t}\partial_y \omega,
\quad
\partial^2_y \omega  \mapsto \frac{1}{(1+\alpha^\prime(y)\tau t)^2}\partial^2_y \omega
  -\frac{\alpha^{\prime\prime}(y)\tau t}{(1+\alpha^\prime(y)\tau t)^3}\partial_y\omega
\end{equation*}
for $\omega\in\{v,w,\pi\}.$
Equation \eqref{L-lambda} can be reformulated as
\begin{equation}
\label{Psi}
\Psi(z,(\lambda,\nu,\tau)):=z-(L_{\lambda,\nu,\tau})^{-1}K_{\lambda,\nu,\tau}(z)=0,
\quad z\in {_0}\EE(a).
\end{equation}
Here we observe that $\Psi(\hat z,(1,0,0))=0$ for the solution $\hat z$
of the fixed point equation~\eqref{FP-4}. It follows from
\eqref{R-4}, \eqref{R-7} and Proposition~\ref{pro:estimates-K}
that
\begin{equation*}
[(z,(\lambda,\nu,\tau))\mapsto \Psi(z,(\lambda,\nu,\tau))] \in
C^\omega({_0}\EE(a)\times\Lambda, {_0}\EE(a)).
\end{equation*}
Moreover, it follows from~\eqref{FP-5-zero}--\eqref{DN-estimate} that
\begin{equation*}
D_1\Psi(\hat z,(1,0,0))=I-D(L^{-1}K_{0})(\hat z)
\in\text{Isom}(_{0}\EE(a),{_0}\EE(a)).
\end{equation*}
By the implicit function theorem there exists a neighborhood
$\Lambda\subset (1-\delta,1+\delta)\times\R^n\times
(-\delta,\delta)$ of $(\lambda,\nu,\tau)=(1,0,0)$ such that
\begin{equation}
\label{R-11} 
[(\lambda,\nu,\tau)\mapsto \hat z_{\lambda,\nu,\tau}]
\in C^\omega(\Lambda, {_0}\EE(a)).
\end{equation}
Combining \eqref{R-7} and \eqref{R-11} yields \eqref{R-2}. This
completes the proof of Theorem~\ref{th:nonlinearII}.
\end{proof}
\noindent
{\bf Proof of Theorem 1.1:}
We first observe that 
the compatibility conditions of Theorem 1.1 are
satisfied if and only if \eqref{compatibility-II}
is satisfied.
Next we note that the mapping $\Theta_{h_0}$ given by
$\Theta_{h_0}(x,y):=(x,y+h_0(x))$
defines for each $h_0\in W^{3-2/p}_p(\R^n)$
a $C^2$-diffeomorphism
from $\R^{n+1}_{\pm}$ onto $\Omega_i(0)$
with
$\det [D\Theta_{h_0}(x,y)]=1$.
Its inverse is given by $\Theta^{-1}_{h_0}(x,y):=(x,y-h_0(x))$.
It then follows from the chain rule and 
the transformation rule for integrals that
\begin{equation*}
\begin{split}
\frac{1}{C(h_0)}
\|u_0\|_{W^{2-2/p}_p(\Omega_0)}
\le \|(v_0,w_0)\|_{W^{2-2/p}_p(\dot\R^{n+1})} 
\le  C(h_0)
\|u_0\|_{W^{2-2/p}_p(\Omega_0)}
\end{split}
\end{equation*}
where 
$C(h_0):=M[1+\|\nabla h_0\|_{BC^1(\R^n)}],$
with $M$ an appropriate constant.
Consequently, there exists $\eps_0>0$ such that
$
\|u_0\|_{W^{2-2/p}_p(\Omega_0)}+\|h_0\|_{W^{3-2/p}_p(\R^n)}\le \eps_0
$
implies the smallness-condition~\eqref{sm-II}.
Theorem~\ref{th:nonlinearII}
then yields a unique solution
$(v,w,\pi,[\pi],h)\in \EE(t_0)$ which satisfies the
additional regularity properties listed in part (b) of the theorem.
Setting
\begin{equation*}
(u,q)(t,x,y)=(v,w,\pi)(t,x,y-h(t,x)),\quad (t,x,y)\in{\mathcal O},
\end{equation*}
we then conclude that
$(u,q)\in C^\omega({\mathcal O},\R^{n+2})$ and $[q]\in C^\omega(\mathcal M)$.
The regularity properties listed in \eqref{h-continuous}--\eqref{u-continuous}
are implied by Lemma~\ref{le:6.1}(b)-(c).
Finally, since $\pi(t,x,y)$ is defined 
for every $(t,x,y)\in{\mathcal O}$, we can conclude that
$\pi(t,\cdot)\in \dot H^{1}_p(\Omega(t))\subset U\!C(\Omega(t))$
for every $t\in (0,t_0)$.
\hfill{$\square$}
\frenchspacing


\begin{thebibliography}{99}
\bibitem{Al87} {\sc G. Allain},
Small-time existence for the Navier-Stokes equations with
a free surface.
{\em Appl. Math. Optim.} {\bf 16} (1987), 37--50.

\bibitem{Am95} {\sc H. Amann},
{\em Linear and Quasilinear Parabolic Problems. Vol. I. 
 Abstract Linear Theory.}
Monographs in Mathematics 89, Birkh\"auser, Boston, 1995.
 

\bibitem{Ang90a} {\sc S. Angenent}, 
Nonlinear analytic semiflows. 
{\em Proc. Roy. Soc. Edinburgh} {\bf 115A} (1990), 91--107. 

\bibitem{Ang90b} {\sc S. Angenent}, 
Parabolic equations for curves on surfaces, Part I. Curves with $p$-integrable curvature. 
{\em Annals of Math.} {\bf 132} (1990), 451--483.  

\bibitem{BL76} {\sc J. Bergh}, {\sc J. L\"ofstr\"om},
{\em Interpolation spaces. An introduction.}
Grundlehren der Mathematischen Wissenschaften, No. 223. Springer-Verlag,
Berlin-New York, 1976.

\bibitem{Bea84} {\sc J.T. Beale},
Large-time regularity of viscous surface waves.
{\em Arch. Rational Mech. Anal.} {\bf 84}, (1983/84), 304--352.

\bibitem{BeaNi84} {\sc J.T. Beale}, {\sc T. Nishida}, 
Large-time behavior of viscous surface waves. 
{\em Recent topics in nonlinear PDE, II (Sendai, 1984),} 
 1--14, North-Holland Math. Stud., 128, North-Holland, Amsterdam, 1985.

\bibitem{BPS05} {\sc D. Bothe}, {\sc J.~Pr\"uss}, {\sc G.~Simonett},
Well-posedness of a two-phase flow with soluble surfactant.
{\em Nonlinear elliptic and parabolic problems,}
Progress Nonlinear Differential Equations Appl.,
{\bf 64}, Birkh\"auser, Basel, 2005, 37--61.

\bibitem{BP07} {\sc D.~Bothe}, {\sc J.~Pr\"uss},
$L_p$-Theory for a class of non-Newtonian fluids.
{\em SIAM J. Math. Anal.} {\bf 39}  (2007), 379--421.

\bibitem{Deni91} {\sc I.V. Denisova},
A priori estimates for the solution of the linear nonstationary problem connected with the motion of a drop in a liquid medium. (Russian)
{\em Trudy Mat. Inst. Steklov} {\bf 188}  (1990), 3--21.
Translated in {\em Proc. Steklov Inst. Math.} 1991, no. 3, 1--24.



\bibitem{Deni94} {\sc I.V. Denisova},
Problem of the motion of two viscous incompressible
fluids separated by a closed free interface.
Mathematical problems for Navier-Stokes equations (Centro, 1993).
{\em Acta Appl. Math.} {\bf 37}  (1994), 31--40.




\bibitem{DS95} {\sc I.V. Denisova}, {\sc V.A. Solonnikov},
Classical solvability of the problem of the motion of two viscous 
incompressible fluids. (Russian)
{\em Algebra i Analiz} {\bf 7} (1995), no. 5, 101--142.
Translation in {\em St. Petersburg Math. J.} {\bf 7} (1996), no. 5, 755--786.


\bibitem{DHP03} {\sc R. Denk}, {\sc M. Hieber}, {\sc J. Pr\"uss},
{\it ${\mathcal R}$-boundedness, Fourier multipliers, and problems of
elliptic and parabolic type}, AMS Memoirs {\bf 788}, Providence, R.I. (2003).


\bibitem{DHP07} {\sc R. Denk}, {\sc M. Hieber}, {\sc J.Pr\"uss},
Optimal $L\sp p$-$L\sp q$-estimates for parabolic boundary value problems with inhomogeneous data. {\em Math. Z.} {\bf 257}  (2007), 193--224.

\bibitem{DHP01a} {\sc W.~Desch}, {\sc M.~Hieber}, {\sc J.~Pr{\"u}ss},
\newblock ${L_p}$-theory of the {S}tokes equation in a half space.
\newblock {\em  J. Evol. Equ.} {\bf 1} (2001), no.1, 115--142.

\bibitem{DoVe87} {\sc G. Dore}, {\sc A. Venni},
On the closedness of the sum of two closed operators.
{\em Math. Z. } {\bf 196}  (1987),  no. 2, 189--201.

\bibitem{ES96} {\sc J. Escher}, {\sc G. Simonett},
Analyticity of the interface in a free boundary problem. 
{\em Math. Ann. } {\bf 305}  (1996),  no. 3, 439--459.

\bibitem{ES03} {\sc J. Escher}, {\sc G. Simonett},
Analyticity of solutions to fully nonlinear parabolic evolution equations 
on symmetric spaces. Dedicated to Philippe BŽnilan.  
{\em J. Evol. Equ. } {\bf 3}  (2003),  no. 4, 549--576.

\bibitem{EPS03} {\sc J. Escher}, {\sc J.  Pr\"uss}, {\sc G.  Simonett},
Analytic solutions for a Stefan problem with Gibbs-Thomson correction.
{\it J. Reine Angew. Math.}  {\bf 563} (2003), 1--52.

\bibitem{EPS03b} {\sc J. Escher}, {\sc J.  Pr\"uss}, {\sc G.  Simonett},
A new approach to the regularity of solutions for parabolic equations. 
{\em Evolution equations},  
167--190, Lecture Notes in Pure and Appl. Math., 234, Dekker, New York, 2003.

\bibitem{Ga94} {\sc G.P. Galdi},
{\em An introduction to the mathematical theory of the Navier-Stokes equations.}
Vol. I. Linearized steady problems. Springer Tracts in Natural Philosophy, 38. Springer-Verlag, New York, 1994.

\bibitem{HaHi05} {\sc R.~Haller-Dintelmann}, {\sc M.~Hieber},
$H\sp \infty$-calculus for products of non-commuting operators.
{\em Math. Z.} {\bf 251}  (2005),  no. 1, 85--100.

\bibitem{HiPr98} {\sc M.~Hieber}, {\sc J.~Pr\"uss},
Functional calculi for linear operators in vector-valued $L^p$-spaces  via
the transference principle. {\em  Adv. Differential Equations} {\bf 3} (1998),
847--872.

\bibitem{KW01} {\sc N.~Kalton}, {\sc L.~Weis},
\newblock The {$H^\infty$}-calculus and sums of closed operators,
\newblock {\em Math. Ann.} {\bf 321} (2001), 319--345.

\bibitem{KuWe04} {\sc P.C. Kunstmann}, {\sc L. Weis},
 Maximal $L\sb p$-regularity for parabolic equations, 
 Fourier multiplier theorems and $H\sp \infty$-functional calculus.  
{\em Functional analytic methods for evolution equations}, 
 65--311, Lecture Notes in Math., 1855, Springer, Berlin, 2004.


\bibitem{Ma80} {\sc K. Masuda}, 
On the regularity of solutions of the nonstationary Navier-Stokes equations, in: {\em Approximation Methods for Navier-Stokes Problems}, 
360--370, Lecture Notes in Mathematics 771, Springer-Verlag, Berlin, 1980.

\bibitem{MoSo92} {\sc I.Sh. Mogilevskii}, {\sc V.A. Solonnikov}, 
On the solvability of an evolution free boundary problem for the Navier-Stokes equations in H\"older spaces of functions. 
{\em Mathematical problems relating to the Navier-Stokes equation,  105--181,}
 Ser. Adv. Math. Appl. Sci., 11, World Sci. Publ., River Edge, NJ, 1992.

\bibitem{PaSo02} {\sc M. Padula}, {\sc V.A. Solonnikov},
On the global existence of nonsteady motions of a fluid drop and their exponential decay to a uniform rigid rotation.  
{\em Topics in mathematical fluid mechanics,}  
185--218, Quad. Mat., 10, Dept. Math., Seconda Univ. Napoli, Caserta, 2002. 

\bibitem{PSS07} {\sc J.~Pr\"uss}, {\sc J. Saal}, {\sc G.~Simonett},
Existence of analytic solutions for the classical Stefan problem.
{\em Math. Ann.} {\bf  338}  (2007),  703--755.

\bibitem{PrSi06} {\sc J.~Pr\"uss}, {\sc G.~Simonett},
\newblock $H^\infty$-calculus for the sum of noncommuting operators.
{\em Trans. Amer. Math. Soc.} {\bf  359}  (2007),  no. 8, 3549--3565.

\bibitem{PrSi08} {\sc J.~Pr\"uss}, {\sc G.~Simonett},
Analysis of the boundary symbol for the two-phase Navier-Stokes equations with surface tension.
{\em Banach Center Publications} {\bf 86} (2009),  265--285.


\bibitem{PrSi09b} {\sc J.~Pr\"uss}, {\sc G.~Simonett},
{Analytic solutions for the two-Phase Navier-Stokes equations with surface tension}.
{\em Preprint}.

\bibitem{PrSo90} {\sc J. Pr\"uss}, {\sc H. Sohr},
On operators with bounded imaginary powers in Banach spaces.
{\em Math. Z.} {\bf  203}  (1990), 429--452.

\bibitem{SS03} {\sc Y. Shibata}, {\sc S. Shimizu},
On a resolvent estimate of the interface problem for the Stokes system in a bounded domain.  
{\em J. Differential Equations} {\bf 191}  (2003),  no. 2, 408--444.

\bibitem{SS07} {\sc Y. Shibata}, {\sc S. Shimizu},
On a free boundary problem for the Navier-Stokes equations. 
{\em Differential Integral Equations} {\bf 20}  (2007),  no. 3, 241--276.

\bibitem{SS08} {\sc Y. Shibata}, {\sc S. Shimizu},
On the $L_p-L_q$ maximal regularity of the Neumann problem 
for the Stokes equations in a bounded domain. 
{\em J. Reine Angew. Math.} {\bf 615}  (2008), 157--209.

\bibitem{SS09} {\sc Y. Shibata}, {\sc S. Shimizu},
Local solvability of free boundary problems for the Navier-Stokes
equations with surface tension.
{\em Preprint}.

\bibitem{SS09b} {\sc Y.Shibata}, {\sc S. Shimizu},
Report on a local in time solvability of free surface problems
for the Navier-Stokes equations with surface tension.
{\em Preprint}.

\bibitem{So84} {\sc V.A. Solonnikov},
Solvability of the problem of evolution of an isolated amount of a viscous
incompressible capillary fluid. (Russian)
{\em Mathematical questions in the theory of wave propagation, 14.  Zap. Nauchn. Sem. Leningrad. Otdel. Mat. Inst. Steklov. (LOMI)} {\bf  140}  (1984), 179--186.
Translated in {\em J. Soviet Math.} {\bf 37} (1987).

\bibitem{So86} {\sc V.A. Solonnikov},
Unsteady flow of a finite mass of a fluid bounded by a free surface.  (Russian. English summary)
{\em Zap. Nauchn. Sem. Leningrad. Otdel. Mat. Inst. Steklov. (LOMI)} {\bf 152}  (1986), 137--157.
Translation in {\em J. Soviet Math.} {\bf 40} (1988), no. 5, 672--686.

\bibitem{So89} {\sc V.A. Solonnikov},
Unsteady motions of a finite isolated mass of a self-gravitating fluid.
(Russian)  {\em Algebra i Analiz} {\bf 1}  (1989),  no. 1, 207--249.
Translation in {\em Leningrad Math. J.} {\bf 1} (1990), no. 1, 227--276.
%

\bibitem{So91} {\sc V.A. Solonnikov},
Solvability of a problem on the evolution of a viscous incompressible fluid, 
bounded by a free surface, on a finite time interval.  (Russian)
{\em Algebra i Analiz} {\bf 3}  (1991),  no. 1, 222--257.
Translation in {\em St. Petersburg Math. J.} {\bf 3} (1992), no. 1, 189--220.

\bibitem{So99} {\sc V.A. Solonnikov},
On the quasistationary approximation in the problem of motion
of a capillary drop.
{\em Topics in Nonlinear Analysis. The Herbert Amann Anniversary
Volume}, (J. Escher, G. Simonett, eds.)
Birkh\"auser, Basel, 1999.

\bibitem{So03} {\sc V.A. Solonnikov},
$L\sb q$-estimates for a solution to the problem about the evolution of an isolated amount of a fluid.
{\em J. Math. Sci. (N. Y.)} {\bf 117} (2003), no. 3, 4237--4259.

\bibitem{So03b} {\sc V.A. Solonnikov},
Lectures on evolution free boundary problems: classical solutions. 
{\em Mathematical aspects of evolving interfaces (Funchal, 2000)},  123--175,
Lecture Notes in Math., 1812, Springer, Berlin, 2003. 




\bibitem{Tanaka93} {\sc N. Tanaka},
Two-phase free boundary problem for viscous incompressible
thermo-capillary convection.
{\em Japan J. Mech.} {\bf 21} (1995), 1--41.


\bibitem{Ta96} {\sc A. Tani},
Small-time existence for the three-dimensional Navier-Stokes
equations for an incompressible fluid with a free surface.
{\em Arch. Rational Mech. Anal.} {\bf 133} (1996), 299--331.


\bibitem{TT95} {\sc A. Tani}, {\sc N. Tanaka},
Large-time existence of surface waves in incompressible viscous fluids
with or without surface tension.
{\em Arch. Rat. Mech. Anal.} {\bf 130} (1995), 303--304.

\bibitem{Te92} {\sc Y. Teramato},
On the Navier-Stokes flow down an inclined plane.
{\em J. Math. Kyoto Univ.} {\bf 32} (1992), 593--619.

\bibitem{Tr83} {\sc H. Triebel},
{\em Theory of function spaces.}
Monographs in Mathematics, 78. Birkh\"auser Verlag, Basel, 1983.


\end{thebibliography}
\end{document}